\numberwithin{equation}{section}
\newtheorem{Theorem}{Theorem}[section]
\newtheorem{Corollary}[Theorem]{Corollary}
\newtheorem{Lemma}[Theorem]{Lemma}
\newtheorem{Remark}[Theorem]{Remark}
\begin{document}
\title[The semilinear fractional evolution equation with damping]
{The Cauchy problem of the semilinear second order evolution equation with fractional Laplacian and damping}
\author[K. Fujiwara]{Kazumasa Fujiwara}
\address{Mathematical Institute, Tohoku University,
6-3 Aoba, Aramaki, Aoba-ku, Sendai, Miyagi, 980-8578 Japan}
\email{fujiwara.kazumasa.a6@tohoku.ac.jp}

\author[M. Ikeda]{Masahiro Ikeda}
\address{Department of Mathematics, Faculty of Science and Technology, Keio University,
3-14-1 Hiyoshi, Kohoku-ku, Yokohama, 223-8522, Japan/Center for Advanced Intelligence Project, RIKEN, Japan}
\email{masahiro.ikeda@keio.jp/masahiro.ikeda@riken.jp}

\author[Y. Wakasugi]{Yuta Wakasugi}
\address{Laboratory of Mathematics,
Graduate School of Engineering,
Hiroshima University,
Higashi-Hiroshima, 739-8527, Japan}
\email{wakasugi@hiroshima-u.ac.jp}

\begin{abstract}
In the present paper,
we prove time decay estimates of solutions in weighted Sobolev spaces to the second order evolution equation
with fractional Laplacian and damping for data in Besov spaces.
Our estimates generalize the estimates obtained in the previous studies \cite{Kar00,IIW}.
The second aim of this article is to apply these estimates
to prove small data global well-posedness for the Cauchy problem of the equation with power nonlinearities.
Especially, the estimates obtained in this paper enable us to treat more general conditions on the nonlinearities
and the spatial dimension than the results in the papers \cite{CFZ15, IIW}.
\end{abstract}

\keywords{fractional Laplacian, dissipative term, second order evolution equation, power nonlinearity, asymptotic behavior, global existence}
\subjclass[2010]{35L71; 35A01}


\maketitle

\tableofcontents

\section{Introduction}
\subsection{Linear Problem}
The first aim of the present paper is
to prove time decay estimates of the solution to the Cauchy problem for the second order evolution equation
with fractional Laplacian and damping:
	\begin{align}
	\begin{cases}
	\partial_t^2 u + \partial_t u + ( - \Delta)^{\sigma/2} u = 0,
	&(t,x) \in \lbrack 0,\infty) \times \mathbb R^n,\\
	u(0,x) = u_0(x),
	&x \in \mathbb R^n,\\
	\partial_t u(0,x) = u_1(x),
	&x \in \mathbb R^n,
	\end{cases}
	\label{eq:1.1}
	\end{align}
where $\sigma>0$ denotes the strength of the diffusion,
$(-\Delta)^{\sigma/2}:=\mathcal{F}^{-1}|\xi|^{\sigma}\mathcal{F}$,
$n\in \mathbb{N}$ denotes the spatial dimension,
$u=u(t,x)$ is an unknown function on $[0,\infty)\times \mathbb{R}^n$
and $(u_0,u_1)$ is an $\mathbb{R}^2$-valued prescribed function on $\mathbb{R}^n$.
Here $\mathcal{F}$ and $\mathcal{F}^{-1}$ denote the Fourier transform
and the inverse Fourier transform, respectively (see \eqref{eq:1.12} and \eqref{eq:1.13} for the definitions).

By taking the Fourier transform $\mathcal{F}$ of the problem \eqref{eq:1.1},
we see that there exists a unique global solution $u$ to the problem \eqref{eq:1.1} such that the identity
	\begin{equation}
	\label{eq:1.2}
	u(t,x)=\left\{\widetilde{\mathcal{S}_{\sigma}}(t)u_0\right\}(x)+\left\{\mathcal{S}_{\sigma}(t)u_1\right\}(x)
	\end{equation}
holds for any $(t,x)\in \lbrack 0,\infty)\times \mathbb{R}^n$.
Here $\mathcal{S}_{\sigma}(t)$ on $\lbrack 0,\infty)$ is the solution operator defined by
	\[
	\mathcal S_\sigma(t)
	:= e^{- \frac t 2} \mathcal F^{-1} L_\sigma(t,\xi) \mathcal F,
	\]
where the function $L_{\sigma}:[0,\infty)\times \mathbb{R}^n\rightarrow \mathbb{R}$ is given by
	\begin{align*}
	L_\sigma(t,\xi)
	&:= \begin{cases}
	\displaystyle
	\frac{\sinh\left(t \sqrt{ \frac 1 4-|\xi|^\sigma}\right)}{\sqrt{ \frac 1 4-|\xi|^\sigma}},
	&\mathrm{if} \quad |\xi| < 2^{- \frac 2 \sigma},\\[10pt]
	\displaystyle
	\frac{\sin\left(t \sqrt{ |\xi|^\sigma - \frac 1 4}\right)}{\sqrt{ |\xi|^\sigma - \frac 1 4}},
	&\mathrm{if} \quad |\xi| > 2^{- \frac 2 \sigma},
	\end{cases}
	\end{align*}
and the operator
$\widetilde{\mathcal{S}_{\sigma}}(t)$
is defined by
	\[
	\widetilde{ \mathcal S_\sigma}(t):=( \partial_t + 1 ) \mathcal S_\sigma(t).
	\]
The model \eqref{eq:1.1} describes various kinds of physical phenomena such as super-conductivity
(see \cite{FaLuRe10} for example).
When $\sigma=2$, the first equation of \eqref{eq:1.1} is the classical damped wave equation:
	\begin{equation}
	\label{eq:1.3}
	\partial_t^2u+\partial_tu-\Delta u=0,
	\end{equation}
which is firstly derived by Oliver Heaviside as the telegrapher's equation, and describes the current and voltage in an electrical circuit with resistance and inductance (see \cite{Gol, K79} for other physical backgrounds). 

There are many studies about asymptotic behavior in time of the solution \eqref{eq:1.2}
to the Cauchy problem \eqref{eq:1.1} as $t\rightarrow\infty$
(see \cite{Kar00, CFZ15, ChHa03, ChdLuIk13, IkNi03, RaToYo11}).
Karch \cite{Kar00} studied the asymptotic behavior of the solution to the problem \eqref{eq:1.1}
with an external force $h=h(t,x)$:
	\begin{equation}
	\label{eq:1.4}
	\partial_t^2 u + \partial_t u + ( - \Delta)^{\sigma/2} u = h.
	\end{equation}
By using the Fourier transform and its inverse again,
we find that there exists a unique global solution $u=u(t,x)$ on $[0,\infty)\times \mathbb{R}^n$
to the inhomogeneous problem \eqref{eq:1.4} with suitable initial data $(u_0,u_1)$ such that the identity
	\begin{equation}
	\label{eq:1.5}
	u(t,x)
	= \left\{\widetilde{\mathcal S_\sigma}(t) u_0\right\}(x)
	+ \left\{\mathcal S_\sigma(t) u_1\right\}(x)
	+ \int_0^t \mathcal S_\sigma(t-\tau) h(\tau,x) d \tau
	\end{equation}
holds for any $(t,x)\in \lbrack 0,\infty)\times \mathbb{R}^n$.
In Theorem 2.1 in \cite{Kar00},
Karch proved the asymptotic estimate of the solution \eqref{eq:1.5} to the problem \eqref{eq:1.4}
with data $(u_0,u_1)\in \left(L^1(\mathbb{R}^n)\right)^2$
and a suitable external force $h(t,x)\in L^1_t\left(0,\infty;L_x^1(\mathbb{R}^n)\right)$,
that is, the solution satisfies
	\begin{equation}
	\label{eq:1.6}
	\left\|u(t)-MG_{\sigma}(t)\right\|_{L^p}=o\left(t^{-\frac{n}{\sigma}\left(1-\frac{1}{p}\right)}\right)
	\end{equation}
as $t\rightarrow\infty$, where $p\in [2,\infty]$ and $M\in \mathbb{R}$ is a constant given by
	\[
	M:=\int_{\mathbb{R}^n}\{u_0(x)+u_1(x)\}dx+\int_{[0,\infty)\times\mathbb{R}^n}h(t,x)dx \, dt
	\]
and $G_{\sigma}:(0,\infty)\times\mathbb{R}^n\rightarrow\mathbb{R}_{> 0}$ with $\sigma>0$ is the heat kernel defined by
\begin{align}
\label{eq:1.7}
G_{\sigma}(t,x):=(2\pi)^{n/2}\mathcal{F}^{-1}\left[e^{-t|\xi|^{\sigma}}\right](x)=(2\pi)^{-n/2}\int_{\mathbb{R}^n}e^{-t|\xi|^{\sigma}+ix\cdot\xi}d\xi.
\end{align}
The estimate \eqref{eq:1.6} shows that solutions to the problem \eqref{eq:1.4}
with suitable data and external force
have a diffusion phenomena and behaves like the heat kernel $G_{\sigma}$ up to the constant
as $t\rightarrow\infty$ of the parabolic equation corresponding to the second order equation in \eqref{eq:1.1}:
	\[
	\partial_tu+(-\Delta)^{\sigma/2}u=0.
	\]
The estimate \eqref{eq:1.6} also leads to the sharp time decay estimate of the solution
to the problem \eqref{eq:1.4} in $L^p$-sense:
	\begin{equation}
	\label{eq:1.8}
	\|u(t)\|_{L^p}=O\left(t^{-\frac{n}{\sigma}\left(1-\frac{1}{p}\right)}\right),
	\end{equation}
as $t\rightarrow\infty$, where $p\in [2,\infty]$.
We note that the estimate \eqref{eq:1.8} with a lower exponent $p\in \lbrack 1,2)$
does not follow from Theorem 2.1 in \cite{Kar00},
thought our estimate (Corollary \ref{Corollary:2.4})) implies the estimate \eqref{eq:1.8}
with a wider range of $p\in [1,\infty]$ than above.

After the work \cite{Kar00}, Ikehata and Nishihara \cite{IkNi03},
Chill and Haraux \cite{ChHa03}, Radu, Todorova and Yordanov \cite{RaToYo11} and Ikehata, Todorova and Yordanov \cite{IkToYo13JDE}
generalized the diffusion phenomena \eqref{eq:1.6} to an abstract dissipative wave equation.

When $\sigma=2$, the first equation of \eqref{eq:1.1} is the classical damped wave equation \eqref{eq:1.3}
and the diffusion phenomena and time decay estimates in suitable function spaces were studied
in \cite{HKN04, HoOg04, IIOW, IIW, Ma76, Na04, Ni03MathZ}.

The following equation with a structural damping $(-\Delta)^{\delta/2}\partial_tu$,
which also generalizes the classical free damped wave equation \eqref{eq:1.3}, is studied:
	\begin{equation}
	\label{eq:1.9}
	\partial_t^2u+b(t)(-\Delta)^{\delta/2}\partial_tu+(-\Delta)^{\sigma/2} u=0
	\quad (t,x)\in \lbrack 0,\infty) \times \mathbb R^n,
	\end{equation}
where $\sigma>0$ and $\delta\ge 0$ are constants, and $b=b(t)$ is a time-dependent coefficient of the damping.
Due to the structural damping, solutions to \eqref{eq:1.9} have a smoothing property,
which is different from our equation \eqref{eq:1.4},
see \cite{DaEb14JDE, DaEb16, ChdLuIk13, IkNa12, FaLuRe10, KaRe15ADE, LuRe09, Wi06, Wi07}.
For other generalization of the classical damped wave equation, see \cite{dLuChIk15}.

The first aim of the present paper is to prove time decay estimates of the solution \eqref{eq:1.2}
to the problem \eqref{eq:1.1} in weighted Sobolev spaces for data in Besov spaces
(see Theorems \ref{Theorem:2.1} and \ref{Theorem:2.3}).
The estimate in weighted spaces generalizes the decay estimate \eqref{eq:1.8} derived from Theorem 2.1
in the previous study \cite{Kar00}.
Moreover the estimate in homogeneous Sobolev spaces improves the previous result (Lemma 2.1 in \cite{IIW})
(see Remark \ref{Remark:2.2}).
Another advantage of our results is that the estimate in the weighted space is useful to study nonlinear problems,
and more precisely it allows us to treat higher dimensional cases than Theorem 1.4 in \cite{CFZ15}
(see Theorem \ref{Theorem:2.7}).

\subsection{Nonlinear Problem}
The second aim of the present paper is to study well-posedness of the Cauchy problem \eqref{eq:1.1}
with a $\rho$-th order power nonlinearity $\mathcal{N}$ with $\rho>1$:
	\begin{align}
	\begin{cases}
	\partial_t^2 u + \partial_t u + ( - \Delta)^{\sigma/2} u = \mathcal N(u),
	&(t,x) \in \lbrack 0,T) \times \mathbb R^n,\\
	u(0,x) = u_0(x),
	&x \in \mathbb R^n,\\
	\partial_t u(0,x) = u_1(x),
	&x \in \mathbb R^n,
	\end{cases}
	\label{eq:1.10}
	\end{align}
where $T>0$ denotes the maximal existence time of the function $u$,
$\sigma>0$ stands for the strength of the diffusion,
$u=u(t,x)$ is an unknown function on $[0,T)\times \mathbb{R}^n$,
a pair of functions $(u_0,u_1)$ is a prescribed $\mathbb{R}^2$-valued function on $\mathbb{R}^n$,
and $\mathcal{N}=\mathcal{N}(z)$ is a nonlinear function on $\mathbb{R}$ of the order $O(|z|^{\rho})$
with a parameter $\rho>1$ (for more precise assumptions on the nonlinearity $\mathcal{N}$,
see \eqref{eq:2.7} and \eqref{eq:2.8}).

There are many mathematical results for the problem \eqref{eq:1.10} about global existence,
blow-up, asymptotic behavior of solutions, determining a critical exponent, etc.
Here the critical exponent $\rho_c$ means the number which divides global existence and blow-up, that is, if $\rho>\rho_c$, small data global existence holds, on the other hand, if $\rho<\rho_c$, small data blow-up occurs.

When $\sigma=2$, the first equation of the problem \eqref{eq:1.10} becomes the classical semilinear damped wave equation,
and small data global existence, asymptotic behavior of solutions,
and blow-up to the problem \eqref{eq:1.10} were extensively studied
(see \cite{LZ95, TY01, QZ01, HKN04, HoOg04, Ma76, Na04, Ni03MathZ}).
From their results, we see that the critical exponent $\rho_c$ to the problem \eqref{eq:1.10} with $\sigma=2$,
$\mathcal{N}(z)= |z|^{\rho}$ and data $(u_0,u_1)$ belonging to $ (L^1(\mathbb{R}^n))^2$
or the smaller space is given by the Fujita exponent $\rho_F=\rho_F(n):=1+2/n$, that is, $\rho_c=\rho_F$.
Here the Fujita exponent $\rho_F$ is originally the ($L^1$-scaling) critical exponent of the corresponding heat equation
$\partial_tu-\Delta u=|u|^{\rho}$ (see \cite{F}).
We note that the critical case $\rho=\rho_F(n)$ belongs to the blow-up region.

The critical exponent $\rho_c$ to the problem \eqref{eq:1.10} with $\sigma=2$ for data $(u_0,u_1)$
which do not belong to $(L^1(\mathbb{R}^n))^2$ in general was studied in the papers \cite{IIOW, IIW, S19}.
From the works \cite{S19, IIW},
we see that the critical exponent to the problem \eqref{eq:1.10}
with data $(u_0,u_1)$ in the weighted Sobolev space $H^{1,\alpha}(\mathbb{R}^n)\times H^{0,\alpha}(\mathbb{R}^n)$
with $\alpha\in \lbrack 0,n/2)$ is given by $1+\frac{4}{n+2\alpha}$.
We remark that in the critical case $\rho=1+\frac{4}{n+2\alpha}$ with $\alpha\in \lbrack 0,n/2)$,
the problem \eqref{eq:1.10} is globally well-posed (see \cite{IIOW} for $L^p$-framework with $p>1$).

Chen-Fan-Zhang \cite{CFZ15} studied the problem \eqref{eq:1.10}
with $\sigma>0$ and proved a small data global existence in the supercritical $\rho>1+\frac{\sigma}{n}$
and low dimensional ($n=1,2$) case.

The following semilinear equation with a structural damping $(-\Delta)^{\delta/2}\partial_tu$,
which also generalizes the semilinear classical damped wave equation, is studied:
	\begin{equation}
	\label{eq:1.11}
	\partial_t^2u+2a(-\Delta)^{\delta/2}\partial_tu+(-\Delta)^{\sigma/2} u=\mathcal{N}(u),
	\quad (t,x)\in \lbrack 0,T)\times\mathbb{R}^n,
	\end{equation}
where $a>0$ and $\delta>0$ are positive constants.
The solution to \eqref{eq:1.11} has smoothing effect due to the structural damping,
different from our problem \eqref{eq:1.10}, see \cite{DaEb14NA, DaRe14, DaoRe19, DaoRe19DCDS}.
For other related works, see \cite{KaRe18, PhKaRe15}.

Our aim of the present paper is to generalize the global well-posedness result ($n=1,2$)
to the problem \eqref{eq:1.10} obtained in \cite[Theorem 1.4]{CFZ15} to any spatial dimension $n\ge 1$.
We also improve the local well-posedness result obtained in \cite[Theorem 1.1]{IIW}
and especially we treat more general conditions on the nonlinearity $\mathcal{N}$
by using our new time decay estimates (Theorems \ref{Theorem:2.1} and \ref{Theorem:2.3}).

\subsection{Notation and basic notions}
In this subsection,
we collect the notations used throughout this paper and basic notions about the problem \eqref{eq:1.10}.

The letter $C$ indicates a generic positive constant, which may change from line to line.
For $x\in \mathbb{R}^{l}$ with $l=1$ or $n$, we use $\langle x \rangle  := \sqrt{1+|x|^2}$.
For $a \in \mathbb R$, $a_+ := \max(a,0)$, $a_- := \min(a,0)$ and $[a]$ denotes the integer part of $a$.\ 
Let $\mathcal{S}=\mathcal{S}(\mathbb{R}^n)$ be the rapidly decaying function space. For $f \in \mathcal S$,
$\mathcal F [f]$ and $\hat f$ denote the Fourier transform of $f$ defined by
	\begin{equation}
	\label{eq:1.12}
	\hat f(\xi) := \int_{\mathbb R^n} f(x) e^{- i \xi \cdot x} dx
	\end{equation}
for $\xi \in \mathbb R^n$. The inverse Fourier transform of $f$ is defined by
	\begin{equation}
	\label{eq:1.13}
	\mathcal{F}^{-1}[f](x) := (2\pi)^{-n}\int_{\mathbb R^n} f(\xi) e^{i \xi \cdot x} d\xi
	\end{equation}
for $x\in \mathbb{R}^n$.

For $s \in \mathbb R$,
we define the fractional derivative operators $(1-\Delta)^{s/2}$ and $(-\Delta)^{s/2}$
as Fourier multipliers defined by
$\mathcal F^{-1} \langle \xi \rangle^s \mathcal F$
and $\mathcal F^{-1} |\xi|^s \mathcal F$,
respectively.

For any $p\in [1,\infty]$, a mesurable set $A \subseteq \mathbb R^n$,
and a non-negative measurable function $g$,
$L^p\left(A,g(x)dx\right)$ stands for the usual Lebesgue space equipped with the norm
	\begin{align*}
	\| f \|_{L^p\left(A,g(x)dx\right)}
	&:=
	\begin{cases}
	\displaystyle
	\bigg( \int_{x\in A} |f(x)|^p g(x) dx \bigg)^{\frac 1 p},
	&\mathrm{if} \quad 1 \leq p < \infty,\\
	\mathrm{esssup}_{x \in A} \thinspace |f(x)| g(x),
	&\mathrm{if} \quad p = \infty.
	\end{cases}
	\end{align*}
For simplicity, we abbreviate $L^p(A, dx)$ to $L^p(A)$
and $L^p(\mathbb R^n)$ to $L^p$.
In addition,
for $a,b\in \mathbb{R}$ with $a<b$ and an interval $I$ between $a$ and $b$, we denote $L^p(I)$ by $L^p(a,b)$.
Similarly,
for any $p\in [1,\infty]$ and $A \subseteq \mathbb Z^n$,
$\ell^p(A)$ consists of all sequences $a=(a_j)_{j \in A}$ satisfying
that $\| a \|_{\ell^p(A)} < \infty$, where the norm $\| a \|_{\ell^p(A)}$ is defined by
	\begin{align*}
	\| a \|_{\ell^p(A)}
	&:=
	\begin{cases}
	\displaystyle
	\left( \sum_{j \in A} |a_j|^p \right)^{\frac 1 p}
	&\mathrm{if} \quad 1 \leq p < \infty,\\
	\sup_{j \in A} \thinspace |a_j|
	&\mathrm{if} \quad p = \infty.
	\end{cases}
	\end{align*}
For $s\in \mathbb{R}$ and $p\in [1,\infty]$, $H_p^s$ and $\dot H_p^s$
denote the usual inhomogeneous and homogeneous Sobolev spaces
defined by $(1-\Delta)^{- \frac s 2} L^p$ and $(-\Delta)^{- \frac s 2} L^p$,
equipped with the norms
	\[
	\| f \|_{H_p^s}
	:= \left\| (1-\Delta)^{\frac s 2} f \right\|_{L^p},
	\qquad
	\| f \|_{\dot H_p^s}
	:= \left\| (-\Delta)^{\frac s 2} f \right\|_{L^p},
	\]
respectively. We also abbreviate
$H_2^s$ and $\dot H_2^s$
to
$H^s$ and $\dot H^s$, respectively.
Similarly, for $\alpha > 0$,
$H^{0,\alpha}$ is defined by weighted Lebesgue space equipped with the norm
	\[
	\| f \|_{H^{0,\alpha}}
	:= \left\| \langle \cdot \rangle^\alpha f \right\|_{L^2}.
	\]

Next,
for $r>0$ and a Banach space $X$, let $B(r,X)$ be the open ball of $X$ with radius $r$
centered at the origin.
In particular, when $X= \mathbb{R}^n$,
we write
$B(r) = B(r, \mathbb{R}^n)$.
Let $\phi, \psi \in \mathcal S(\mathbb{R}^n)$ satisfy that
$\mathrm{supp} \thinspace \hat \psi \subset B(1)$,
$\mathrm{supp} \thinspace \hat \phi \subset B(2) \backslash B(2^{-1})$,
	\[
	\hat \psi(\xi) + \sum_{j \geq 0} \hat \phi (2^{-j} \xi)
	= 1
	\]
for any $\xi \in \mathbb Z^n$.
We set $\phi_j := \mathcal F^{-1} \hat \phi(2^{-j} \cdot)$
for any $j \in \mathbb Z$.

For $s\in \mathbb{R}$, $p\in [1,\infty]$ and $q\in[1,\infty]$,
$B_{p,q}^s=B_{p,q}^s(\mathbb{R}^n)$ denotes the inhomogeneous Besov space
defined by the collection of all $f \in \mathcal S'(\mathbb R^n)$ satisfying
	\[
	\|f\|_{B_{p,q}^s}
	:= \| \psi \ast f \|_{L^q(\mathbb R^n)}
	+ \| 2^{sj} \|\phi_j \ast f\|_{L^p} \|_{\ell^q(\mathbb Z_{\geq 0})}
	< \infty,
	\]
where $\mathbb Z_{\geq 0}$ is the collection of all non-negative integers.
$\dot B_{p,q}^s=\dot B_{p,q}^s(\mathbb{R}^n)$ denotes the homogeneous Besov space,
defined by the collection of all $f \in \mathcal S' / \mathcal P$ satisfying that
	\[
	\|f\|_{\dot B_{p,q}^s}
	:= \| 2^{sj} \|\phi_j \ast f\|_{L^p} \|_{\ell^q(\mathbb Z)}
	< \infty,
	\]
where $\mathcal P$ is the collection of all polynomials.
We note that for any $s > 0$ and $1 \leq p, q \leq \infty$,
the relation $B_{p,q}^s = L^p \cap \dot B_{p,q}^s$
and the equivalence
	\begin{align}
	\| f \|_{B_{p,q}^s}
	\sim \| f \|_{L^p} + \| f \|_{\dot B_{p,q}^s}
	\label{eq:1.14}
	\end{align}
hold, where \eqref{eq:1.14} indicates that there exist positive constants $c$ and $C$
such that for any $f \in B_{p,q}^s$, the estimates
	\[
	c \left(\| f \|_{L^p} + \| f \|_{\dot B_{p,q}^s}\right)
	\leq \| f \|_{B_{p,q}^s}
	\leq C \left(\| f \|_{L^p} + \| f \|_{\dot B_{p,q}^s}\right)
	\]
hold.
For the details,
see \cite[6.3.2 Theorem]{INT}.
It is also known that
$\dot B_{p,q}^s$ admits equivalent norms.
Let $\tau_y$ be the translation operator with $y \in \mathbb R^n$
defined by $\tau_y f = f(y+\cdot)$.
When $0 \leq s < 1$,
the equivalence
	\begin{align}
	\| f \|_{\dot B_{p,q}^s}
	&\sim
	\Big\| \rho^{-s} \sup_{|y| \leq \rho}
		\| (\tau_y - 1) f \|_{L^p(\mathbb R^n)}
	\Big\|_{L^q(0,\infty,\frac {d \rho} \rho)}
	\nonumber\\
	&= \bigg( \int_0^\infty \rho^{-sq} \sup_{|y| \leq \rho}
	\| (\tau_y - 1)f \|_{L^p}^q \frac {d \rho}{\rho} \bigg)^{\frac 1 q}
	\label{eq:1.15}
	\end{align}
holds for any $1 \leq p,q \leq \infty$.
More generally, for positive integer $m$ and $0 \leq s < m$,
the equivalence
	\begin{align}
	\| f \|_{\dot B_{p,q}^s}
	\sim \Big\| \rho^{-s} \sup_{|y| \leq \rho}
		\| (\tau_{y/2} - \tau_{-y/2})^m f \|_{L^p}
	\Big\|_{L^q\left(0,\infty,\frac {d \rho} \rho\right)}
	\label{eq:1.16}
	\end{align}
holds for any $1 \leq p,q \leq \infty$.
For the details,
see \cite[6.3.1 Theorem]{INT}.

We introduce the nonlinear mapping $\Phi$ on a suitable function space given by
	\[
	\Phi(u)(t)
	:= \widetilde{\mathcal S_\sigma}(t) u_0
	+ \mathcal S_\sigma(t) u_1
	+ \int_0^t \mathcal S_\sigma(t-\tau) \mathcal N(u(\tau)) d \tau.
	\]
Then the integral form of the problem \eqref{eq:1.10} is given by
	\begin{align}
	u(t)
	= \Phi(u)(t).
	\label{eq:1.17}
	\end{align}
Let $T>0$, $s\in \mathbb{R}$ and $\alpha\in \mathbb{R}$.
We say that a function $u=u(t,x):[0,T]\times\mathbb{R}^n\rightarrow\mathbb{R}$
is a mild solution of \eqref{eq:1.10} if $(u,\partial_tu)$ belongs to
	\[
	C\left([0,T]; (H^s \cap H^{0,\alpha}) \times (H^{s-\sigma/2} \cap H^{0,\alpha}) \right)
	\]
and it satisfies the integral form \eqref{eq:1.17} in $(H^s \cap H^{0,\alpha})$-sense.

\section{Main Results}
In this section, we state our main results in the present paper.

\subsection{Linear Problem}
In this subsection,
we state time decay estimates for the free solution
in weighted Sobolev spaces to the Cauchy problem \eqref{eq:1.1}.

The following theorem means time decay estimates of the solution $\mathcal{S}_{\sigma}(t)g$
to the problem \eqref{eq:1.1} for the initial data $u_0=0$ and $u_1=g$:
\begin{Theorem}[Time decay estimates for the solution $\mathcal S_\sigma(t)g$]
\label{Theorem:2.1}
Let $n\in \mathbb{N}$, $\sigma>0$, $\gamma \in [1,2]$, and $s_1,s_2\in \mathbb{R}$ with $s_1 \geq s_2 \geq 0$.
Then there exists a positive constant $C=C(n,\sigma,\gamma,s_1,s_2)>0$ such that for any $t\in \lbrack 0,\infty)$,
the estimate
	\begin{align}
	\left\| D^{s_1} \mathcal S_\sigma (t) g \right\|_{L^2}
	&\leq C
	\langle t \rangle^{- \frac 1 \sigma (s_1 - \frac n 2)}
	\min \big( \langle t \rangle^{\frac 1 \sigma (s_2 - \frac n \gamma)} \| g \|_{\dot B_{\gamma,2}^{s_2}},
	\langle t \rangle^{\frac 1 \sigma (s_2-n)} \| g \|_{\dot H_1^{s_2}}\big)
	\nonumber\\
	&+ C e^{- \frac t 4}
	\left\| D^{s_1} (1-\Delta)^{-\frac \sigma 4}
		\mathcal F^{-1} ( ( 1 - \hat \psi(2^{- [ \frac 2 \sigma + 1]} \cdot )) \hat g) \right\|_{L^2},
	\label{eq:2.1}
	\end{align}
holds provided that the right hand side is finite.

Moreover, let $\nu \in [1,2]$ and $\beta>0$ with
	\begin{align}
	\beta <
	\begin{cases}
		\infty
		&\mathrm{if} \quad \sigma \in 2\mathbb{N},\\
		\frac{n}{\gamma} - \frac{n}{2} + \sigma
		&\mathrm{otherwise}.
	\end{cases}
	\label{eq:2.2}
	\end{align}
Then there exists a positive constant $C=C(n,\sigma,\gamma,\nu,\beta)>0$ such that for any $t\in \lbrack 0,\infty)$,
the estimate
	\begin{align}
	\left\| | \cdot |^\beta \mathcal S_\sigma (t) g \right\|_{L^2}
	&\leq C
	\langle t \rangle^{- \frac 1 \sigma( - \beta - \frac n 2 ) + \frac 1 \sigma (- \frac n \gamma) }
	\| g \|_{L^\gamma}
	+ C \langle t \rangle^{-\frac 1 \sigma (- \frac n 2) + \frac 1 \sigma ( - \frac n \nu)}
	\left\| |\cdot|^\beta g \right\|_{L^\nu}
	\nonumber\\
	&+ C e^{- \frac t 4}
	\| \langle \cdot \rangle^\beta g \|_{L^{q_\sigma}}
	\label{eq:2.3}
	\end{align}
holds provided that the right hand side is finite.
\end{Theorem}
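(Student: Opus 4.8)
The plan is to pass to the Fourier side and read the weight $|\cdot|^\beta$ as a fractional derivative in the frequency variable. By Plancherel's theorem, $\| |\cdot|^\beta \mathcal S_\sigma(t) g \|_{L^2} \sim \| (-\Delta_\xi)^{\beta/2} ( e^{-t/2} L_\sigma(t,\cdot)\, \hat g) \|_{L^2_\xi}$, where $(-\Delta_\xi)^{\beta/2}$ denotes the fractional Laplacian acting in $\xi$. Since $\dot H^\beta = \dot B^\beta_{2,2}$, I would then replace this homogeneous Sobolev norm in $\xi$ by the difference characterization \eqref{eq:1.16} (with $p=q=2$ and $m=[\beta]+1$), so that the whole estimate is reduced to controlling the iterated differences $(\tau_{y/2}-\tau_{-y/2})^m$ of the product $F_t \hat g$, where $F_t(\xi):= e^{-t/2} L_\sigma(t,\xi)$ is the Fourier symbol of $\mathcal S_\sigma(t)$.

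Next I would split $F_t = \chi_L F_t + \chi_H F_t$ by a smooth cutoff $\chi_L$ supported near $\{|\xi| \le 2^{-2/\sigma}\}$ and $\chi_H = 1-\chi_L$, matching the threshold at which $L_\sigma$ switches between its hyperbolic and oscillatory regimes; this is the same localization as the cutoff $\hat\psi(2^{-[2/\sigma+1]}\cdot)$ appearing in \eqref{eq:2.1}. On each piece I would apply a discrete Leibniz rule, of the schematic form $(\tau_{y/2}-\tau_{-y/2})(FG) = (\tau_{y/2}-\tau_{-y/2})F \cdot \tau_{y/2}G + \tau_{-y/2}F \cdot (\tau_{y/2}-\tau_{-y/2})G$, and distribute the $m$ differences so that each resulting term carries either all differences on the symbol $F_t$ or all of them on $\hat g$. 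The term with the differences on $\hat g$ reproduces $\| |\cdot|^\beta g\|$-type quantities and yields the second summand of \eqref{eq:2.3}, while the term with the differences on $F_t$ contributes the pure $\langle t\rangle$-growth multiplied by $\|g\|$, that is, the first summand.

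The core of the argument is the low-frequency symbol $\chi_L F_t$. Expanding $\sqrt{1/4-|\xi|^\sigma}=\tfrac12\sqrt{1-4|\xi|^\sigma}$ and writing $\sinh$ as a difference of exponentials, one sees that $e^{-t/2}L_\sigma(t,\xi)$ is, up to a smooth bounded factor, comparable to the heat symbol $e^{-c t|\xi|^\sigma}$ for $|\xi|\le 2^{-2/\sigma}$ (the second, exponentially small, branch being $O(e^{-t/2})$ and contributing only to the exponentially decaying term). Hence on $\chi_L$ the problem becomes a weighted estimate for the fractional heat semigroup, and the $\langle t\rangle$-powers in \eqref{eq:2.3} follow from the parabolic scaling $\xi \mapsto t^{-1/\sigma}\xi$ combined with Young's inequality in the relevant exponents ($\gamma$ for the symbol-difference term, $\nu$ for the data-difference term). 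The decisive point, and the source of the restriction \eqref{eq:2.2}, is the regularity of $|\xi|^\sigma$ at the origin: when $\sigma \in 2\mathbb N$ the symbol is smooth and $\beta$ is unrestricted, whereas otherwise $|\xi|^\sigma$ has only limited smoothness at $\xi=0$ (equivalently, the fractional heat kernel has the fat tail $\sim |x|^{-n-\sigma}$), so the $m$-th difference of $\chi_L F_t$ lies in the required $L^2$-in-$\xi$ space only for $\beta < \tfrac n\gamma - \tfrac n2 + \sigma$.

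Finally, the high-frequency symbol $\chi_H F_t$ is smooth in $\xi$ because $|\xi|$ is bounded away from the origin there, so differentiating the oscillatory factor $\sin(t\sqrt{|\xi|^\sigma-1/4})/\sqrt{|\xi|^\sigma-1/4}$ produces at most polynomial growth in $t$, which is dominated by the prefactor $e^{-t/2}$; this yields the exponentially decaying term $e^{-t/4}\|\langle\cdot\rangle^\beta g\|_{L^{q_\sigma}}$ after a Sobolev embedding to pass from $L^2$ to $L^{q_\sigma}$. The main obstacle I anticipate is the low-frequency weighted estimate: making the comparison with $e^{-ct|\xi|^\sigma}$ quantitative while tracking $m$ frequency-differences, and pinning down the sharp exponent \eqref{eq:2.2} from the conic singularity of $|\xi|^\sigma$, is the delicate part, and the bookkeeping needed to separate the two distinct integrability indices $\gamma$ and $\nu$ is the remaining technical nuisance.
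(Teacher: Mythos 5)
Your proposal engages only with the weighted estimate \eqref{eq:2.3}; the derivative decay estimate \eqref{eq:2.1} --- the other half of the theorem, and the part that sharpens \cite{IIW} (cf.\ Remark \ref{Remark:2.2}) --- is never addressed. The paper proves \eqref{eq:2.1} by a short frequency-side argument that has no counterpart in your outline: H\"older's inequality on each Littlewood--Paley block, pairing the low-frequency symbol in $L^{2\gamma'/(\gamma'-2)}$ against $\| |\xi|^{s_2}\hat\phi_j\hat g\|_{L^{\gamma'}}$ so that the $\ell^2$-summation over $j$ produces exactly the $\dot B_{\gamma,2}^{s_2}$ norm; the lowest block paired with $\| |\xi|^{s_2}\hat g\|_{L^\infty}\leq \|D^{s_2}g\|_{L^1}$, which yields the $\dot H_1^{s_2}$ alternative in the minimum; and the uniform bound $\sup_{t>0}\|\langle\xi\rangle^{\sigma/2}L_\sigma(t,\xi)\|_{L^\infty(|\xi|>2^{-2/\sigma})}<\infty$ for the high-frequency, exponentially decaying term. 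As written, your plan cannot produce \eqref{eq:2.1} at all.

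For \eqref{eq:2.3} there is also a concrete gap in the decisive step: the discrete Leibniz rule, iterated $m$ times, does \emph{not} let you ``distribute the $m$ differences so that each resulting term carries either all differences on the symbol $F_t$ or all of them on $\hat g$.'' You unavoidably generate mixed terms of the schematic form $(\Delta_y^k F_t)\cdot(\Delta_y^{m-k}\hat g)$ with $0<k<m$ (up to translations). Since the right-hand side of \eqref{eq:2.3} contains the weight only at the extreme powers $\beta$ and $0$, and moreover with two \emph{different} Lebesgue exponents $\gamma$ and $\nu$, these mixed terms correspond to quantities of the type $\| |\cdot|^{\theta\beta}K_t\|\,\| |\cdot|^{(1-\theta)\beta} g\|$ that are not among the stated terms; absorbing them would require an additional interpolation argument for weighted norms with exponents interpolating between $\gamma$ and $\nu$, which you neither state nor verify. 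The paper sidesteps this entirely by working on the physical side: Lemma \ref{Lemma:3.2} establishes the pointwise kernel bound $t^{-n/\sigma}\langle t^{-1/\sigma}x\rangle^{-n-\sigma}$ (itself proved by a frequency-side Besov criterion, Lemma \ref{Lemma:3.1} via \eqref{eq:1.16}, so your heat-symbol comparison, parabolic scaling, and identification of the fat tail $|x|^{-n-\sigma}$ as the source of \eqref{eq:2.2} are all on target), and then the elementary splitting $|x|^\beta\leq C(|x-y|^\beta+|y|^\beta)$ inside Young's inequality, with the kernel taken in $L^q$, $q=\frac{2\gamma}{3\gamma-2}$ (note $\frac{n}{q'}=\frac n\gamma-\frac n2$, recovering \eqref{eq:2.2}), gives a genuine two-term decomposition valid for every $\beta>0$, with no mixed terms. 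To salvage your dual route you must either carry out the interpolation for the intermediate difference terms or replace the frequency-side difference characterization by this physical-side weight splitting.
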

The proof of Theorem \ref{Theorem:2.1}
is based on the argument of the proof of Lemma 2.1 in \cite{IIW}
(See also Lemma 2.1 in \cite{HKN04} and Theorem 1.1 in \cite{IIOW} for the similar argument).

\begin{Remark}
\label{Remark:2.2}
Estimate \eqref{eq:2.1} is sharper than the estimate (2.1) in Lemma 2.1 in \cite{IIW}
when $\gamma > 1$.
Indeed, this follows from the fact that relation $\dot H_{\gamma}^{s_2} \hookrightarrow \dot B_{\gamma,2}^{s_2}$ holds
for any $s_2 \geq 0$ and $\gamma \in (1,2]$.
For the detail, see \cite[6.4.4 Theorem]{INT} for example.
\end{Remark}

The following theorem means time decay estimates in a weighted Sobolev space of the solution
$\widetilde{\mathcal{S}_{\sigma}(t)}f$ to the problem \eqref{eq:1.1}
with the initial data $u_0=f$ and $u_1=0$:
\begin{Theorem}[Time decay estimates for the solution $\widetilde{\mathcal{S}_{\sigma}(t)}f$]
\label{Theorem:2.3}
Let $n\in \mathbb{N}$, $\sigma>0$, $s_1,s_2\in \mathbb{R}$ with $s_1 \geq s_2 \geq 0$ and $\gamma\in [1,2]$.
Then there exists a positive constant $C=C(n,\sigma,\gamma,s_1,s_2)>0$ such that for any $t\in \lbrack 0,\infty)$,
the estimate
	\begin{align*}
	\left\| D^{s_1} \widetilde{\mathcal S_\sigma} (t) f \right\|_{L^2}
	&\leq C
	\langle t \rangle^{- \frac 1 \sigma (s_1 - \frac n 2)}
	\min\big(\langle t \rangle^{\frac 1 \sigma (s_2 - \frac n \gamma)}
	\| f \|_{\dot B_{\gamma,2}^{s_2}},
	\langle t \rangle^{\frac 1 \sigma(s_2-n)} \| f \|_{\dot H_1^{s_2}} \big)\\
	&+ C e^{- \frac t 4}
	\| D^{s_1} \mathcal F^{-1} ( ( 1 - \hat \psi(2^{- [ \frac 2 \sigma + 1]} \cdot ) ) \hat f) \|_{L^2}
	\end{align*}
holds provided that the right hand side is finite.

Moreover, let $\nu \in [1,2]$ and $\beta>0$ satisfy the condition \eqref{eq:2.2}.
Then there exists a positive constant $C=C(n,\sigma,\gamma,\nu,\beta)>0$ such that for any $t\in \lbrack 0,\infty)$,
the estimate
	\begin{align*}
	\left\| | \cdot |^\beta \widetilde{\mathcal S_\sigma} (t) f \right\|_{L^2}
	&\leq C
	\langle t \rangle^{- \frac 1 \sigma( - \beta - \frac n 2 ) + \frac 1 \sigma (- \frac n \gamma) }
	\| f \|_{L^\gamma}
	+ C \langle t \rangle^{-\frac 1 \sigma (- \frac n 2) + \frac 1 \sigma ( - \frac n \nu)}
	\left\| |\cdot|^\beta f \right\|_{L^\nu}
	\nonumber\\
	&+ C e^{- \frac t 4}
	\left\| \langle \cdot \rangle^\beta f \right\|_{L^2}.
	\end{align*}
holds.
\end{Theorem}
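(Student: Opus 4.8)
The plan is to deduce both estimates from the corresponding estimates for $\mathcal S_\sigma(t)$ in Theorem \ref{Theorem:2.1} together with a direct analysis of a single extra Fourier multiplier. Since $\widetilde{\mathcal S_\sigma}(t) = (\partial_t + 1)\mathcal S_\sigma(t)$ and the multiplier of $\mathcal S_\sigma(t)$ is $e^{-t/2}L_\sigma(t,\xi)$, a direct differentiation shows that the multiplier of $\widetilde{\mathcal S_\sigma}(t)$ is $e^{-t/2}\bigl(\partial_t L_\sigma + \tfrac12 L_\sigma\bigr)$. Observing that $\tfrac12 L_\sigma$ is exactly $\tfrac12$ times the multiplier of $\mathcal S_\sigma(t)$, I first record the decomposition
\[
\widetilde{\mathcal S_\sigma}(t) = \tfrac12 \mathcal S_\sigma(t) + \mathcal T_\sigma(t),
\qquad
\mathcal T_\sigma(t) := e^{-t/2}\,\mathcal F^{-1}\bigl(\partial_t L_\sigma\bigr)\mathcal F,
\]
where $\partial_t L_\sigma$ equals $\cosh\bigl(t\sqrt{1/4-|\xi|^\sigma}\bigr)$ for $|\xi|<2^{-2/\sigma}$ and $\cos\bigl(t\sqrt{|\xi|^\sigma-1/4}\bigr)$ for $|\xi|>2^{-2/\sigma}$. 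For the first estimate the term $\tfrac12\mathcal S_\sigma(t)f$ is handled by applying Theorem \ref{Theorem:2.1} with $g = f$; its high-frequency remainder carries the extra factor $(1-\Delta)^{-\sigma/4}$, which is an $L^2$-contraction and is therefore dominated by the remainder appearing in the present statement. It thus remains to estimate $\mathcal T_\sigma(t)f$.

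For $\mathcal T_\sigma(t)f$ I would split in frequency by the cutoff $\hat\psi(2^{-[2/\sigma+1]}\cdot)$ and argue as in the proof of Theorem \ref{Theorem:2.1}. On the high-frequency region the bound $|\cos(\cdot)|\le 1$ gives $|e^{-t/2}\partial_t L_\sigma| \le e^{-t/2}\le e^{-t/4}$, so Plancherel's theorem yields the remainder $Ce^{-t/4}\|D^{s_1}\mathcal F^{-1}((1-\hat\psi(2^{-[2/\sigma+1]}\cdot))\hat f)\|_{L^2}$; because $\partial_t L_\sigma$ carries no decaying factor $(|\xi|^\sigma-1/4)^{-1/2}$ there is no smoothing gain, which is precisely why the present remainder has no factor $(1-\Delta)^{-\sigma/4}$. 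On the low-frequency region I would use the splitting
\[
e^{-t/2}\cosh\Bigl(t\sqrt{\tfrac14-|\xi|^\sigma}\Bigr)
= \tfrac12\, e^{-t(1/2 - \sqrt{1/4-|\xi|^\sigma})}
+ \tfrac12\, e^{-t(1/2 + \sqrt{1/4-|\xi|^\sigma})}.
\]
The second exponent is bounded below by $1/2$, so that term is exponentially small; for the first, the identity $\tfrac12 - \sqrt{\tfrac14-|\xi|^\sigma} = |\xi|^\sigma/(\tfrac12 + \sqrt{\tfrac14-|\xi|^\sigma})$ shows the exponent is comparable to $|\xi|^\sigma$, so the factor is comparable to the fractional heat kernel $e^{-ct|\xi|^\sigma}$. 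This is the same diffusive profile as that of $L_\sigma$, and since $\cosh$ carries no singular factor $(1/4-|\xi|^\sigma)^{-1/2}$ at the transition frequency $|\xi|=2^{-2/\sigma}$, the low-frequency analysis is in fact more regular than for $\mathcal S_\sigma(t)$; the heat-semigroup bounds of Theorem \ref{Theorem:2.1} then produce the stated time-decay rates.

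For the weighted estimate the multiplication by $|\cdot|^\beta$ corresponds, through the Fourier transform, to taking $\beta$ derivatives of the multiplier in $\xi$, and I would mirror the weighted argument leading to \eqref{eq:2.3}, now with the multiplier $e^{-t/2}\partial_t L_\sigma$ of $\mathcal T_\sigma(t)$ in place of the multiplier $e^{-t/2}L_\sigma$ of $\mathcal S_\sigma(t)$. The two polynomially decaying terms, coming from the low-frequency diffusive factor, are produced exactly as for $\mathcal S_\sigma(t)$ (in particular the $\tfrac12\mathcal S_\sigma(t)f$ piece is covered by the second estimate of Theorem \ref{Theorem:2.1}). The high-frequency remainder, however, must be treated directly rather than through Theorem \ref{Theorem:2.1}: on the support of $1-\hat\psi(2^{-[2/\sigma+1]}\cdot)$ the quantity $(|\xi|^\sigma-1/4)^{-1/2}$ is bounded, hence the full multiplier $e^{-t/2}\bigl(\partial_t L_\sigma+\tfrac12 L_\sigma\bigr)$ is bounded by $Ce^{-t/4}$, and the weighted remainder is controlled in $L^2$, which explains the appearance of $L^2$ rather than $L^{q_\sigma}$ in the statement. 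I expect the main obstacle to lie in this weighted analysis of the low-frequency part: one must estimate fractional $\xi$-derivatives of $e^{-t(1/2-\sqrt{1/4-|\xi|^\sigma})}$ so that the diffusive factor still yields the same powers of $\langle t\rangle$ as $e^{-t|\xi|^\sigma}$, while controlling the smooth factor coming from $\cosh$. Since $\xi\mapsto|\xi|^\sigma$ fails to be smooth at the origin when $\sigma\notin 2\mathbb N$, only finitely many such derivatives are available, and this is precisely the source of the restriction \eqref{eq:2.2} on $\beta$; when $\sigma\in 2\mathbb N$ the multiplier is smooth near the origin and $\beta$ is unrestricted.
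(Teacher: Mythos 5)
Your proposal is correct and follows essentially the same route as the paper, whose proof of Theorem \ref{Theorem:2.3} simply reruns the argument of Theorem \ref{Theorem:2.1} with the observation that $\sup_{t>0}\|\partial_t L_\sigma(t,\xi)\|_{L^\infty(|\xi|>2^{-2/\sigma})}<\infty$ replaces \eqref{eq:3.10} (hence no smoothing factor $(1-\Delta)^{-\sigma/4}$ in the remainder), exactly the mechanism you isolate via $\widetilde{\mathcal S_\sigma}(t)=\tfrac12\mathcal S_\sigma(t)+\mathcal T_\sigma(t)$ and the bounds $e^{-t/2}\cosh\bigl(t\sqrt{1/4-|\xi|^\sigma}\bigr)\sim e^{-ct|\xi|^\sigma}$ at low frequencies and $|\cos|\le 1$ at high frequencies. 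Your splitting off of $\tfrac12\mathcal S_\sigma(t)$ is a cosmetic repackaging rather than a different method, and you correctly treat the weighted high-frequency remainder for the full multiplier directly in $L^2$ (which is necessary, since the $L^{q_\sigma}$-remainder of \eqref{eq:2.3} would not be controlled by $\|\langle\cdot\rangle^\beta f\|_{L^2}$), matching the paper's appeal to the argument of Lemma \ref{Lemma:3.2} and the source of the restriction \eqref{eq:2.2}.
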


From Theorems \ref{Theorem:2.1} and \ref{Theorem:2.3} and the argument of the proof \cite[Lemma2.3]{IIW},
we can derive the following time decay estimates of $L^p$-norm for the solutions to the linear problem \eqref{eq:1.1}:
\begin{Corollary}[Time decay estimates in $L^p$ for the solutions]
\label{Corollary:2.4}
Let $\sigma>0$, $p\in [1,\infty]$, $n\in\mathbb{N}$, $s > \frac{n}{2}$ and $\beta > \frac{n}{2}$.
Then there exists a positive constant $C=C(p,n,s,\beta)>0$ such that for any $t\in \lbrack 0,\infty)$,
the estimates
	\begin{align*}
	&\| \mathcal{S}_{\sigma}(t) g \|_{L^p}
	\le C (1+t)^{-\frac{n}{\sigma}\left(1-\frac{1}{p}\right)}
	( \| g \|_{\dot{H}^{s-\frac{\sigma}{2}}}
	+\| \langle \cdot \rangle^{\beta} g \|_{L^2}),\\
	&\| \widetilde{\mathcal S_\sigma} (t) f \|_{L^p}
	\le C (1+t)^{-\frac{n}{\sigma}\left(1-\frac{1}{p}\right)}
	( \| f \|_{\dot{H}^{s}}
	+\| \langle \cdot \rangle^{\beta} f \|_{L^2}),\\
	&\|\widetilde{\mathcal S_\sigma}(t) u_0
	+ \mathcal S_\sigma(t) u_1\|_{L^p}\\
	&\le C(1+t)^{-\frac{n}{\sigma}\left(1-\frac{1}{p}\right)}
	\left(
	\|(u_0,u_1)\|_{ \dot{H}^{s}\times \dot{H}^{s-\frac{\sigma}{2}}}
	+\|\langle \cdot\rangle^{\beta}(u_0,u_1)\|_{L^2\times L^2}\right)
	\end{align*}
hold provided that the right hand sides are finite.
\end{Corollary}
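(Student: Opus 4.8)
The plan is to derive all three $L^p$ bounds from the $L^2$-type estimates of Theorems~\ref{Theorem:2.1} and~\ref{Theorem:2.3}, reducing to the endpoints $p\in\{1,2,\infty\}$ and interpolating in between. Two elementary facts drive the reduction. First, since $\beta>\frac n2$, Cauchy--Schwarz gives $\|g\|_{L^1}\le\|\langle\cdot\rangle^{-\beta}\|_{L^2}\,\|\langle\cdot\rangle^\beta g\|_{L^2}\le C\|\langle\cdot\rangle^\beta g\|_{L^2}$, so I may use the $L^1$-data branch of \eqref{eq:2.1} (the $\min$ with $\|g\|_{\dot H_1^{s_2}}$, with $s_2=0$) while assuming only weighted $L^2$ control of the data. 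Second, for $s>\frac n2$ the Gagliardo--Nirenberg inequality $\|h\|_{L^p}\le C\|h\|_{L^2}^{1-\theta}\|D^s h\|_{L^2}^{\theta}$ with $\theta=\frac ns(\frac12-\frac1p)$ holds up to $p=\infty$, because then $\theta<1$. I also record that the cut-off $1-\hat\psi(2^{-[\frac2\sigma+1]}\cdot)$ is supported in $\{|\xi|\gtrsim1\}$, where $|\xi|^{s}\langle\xi\rangle^{-\sigma/2}\lesssim|\xi|^{s-\sigma/2}$, so that the exponentially damped terms of \eqref{eq:2.1} are bounded by $\|g\|_{\dot H^{s-\frac\sigma2}}$ (and by $\|g\|_{L^2}\le\|\langle\cdot\rangle^\beta g\|_{L^2}$ when $s_1=0$); the analogous term in Theorem~\ref{Theorem:2.3}, which carries no factor $(1-\Delta)^{-\sigma/4}$, is bounded instead by $\|f\|_{\dot H^{s}}$, which is the source of the exponent shift between the two estimates.

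For $p=\infty$ I apply Gagliardo--Nirenberg to $h=\mathcal S_\sigma(t)g$, inserting the $L^2$-estimate ($s_1=s_2=0$) and the $\dot H^{s}$-estimate ($s_1=s$, $s_2=0$) of Theorem~\ref{Theorem:2.1}. The key computation is that the two polynomial rates $\langle t\rangle^{-\frac n{2\sigma}}$ and $\langle t\rangle^{-\frac1\sigma(s+\frac n2)}$, combined with weights $1-\theta$ and $\theta$, produce the exponent $-\frac n{2\sigma}-\frac{s\theta}\sigma=-\frac n{2\sigma}-\frac1\sigma\cdot\frac n2=-\frac n\sigma$, exactly the claimed rate at $p=\infty$; note that $s_1$ cancels because $s\theta=n(\frac12-\frac1p)$. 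The mixed terms contain at least one exponentially decaying factor and are harmless, and the purely high-frequency contribution is $\lesssim e^{-t/4}(\|g\|_{\dot H^{s-\frac\sigma2}}+\|\langle\cdot\rangle^\beta g\|_{L^2})$. The case $p=2$ is the bare $L^2$-estimate ($s_1=s_2=0$) together with the two facts above, giving the rate $\langle t\rangle^{-\frac n{2\sigma}}$.

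For $p=1$, where no decay is asserted, I use the self-similar splitting $\mathbb R^n=\{|x|\le t^{1/\sigma}\}\cup\{|x|>t^{1/\sigma}\}$. On the inner ball, Hölder with $|B(t^{1/\sigma})|^{1/2}\sim t^{\frac n{2\sigma}}$ and the $L^2$-estimate give a bounded contribution, since $t^{\frac n{2\sigma}}\langle t\rangle^{-\frac n{2\sigma}}\le C$. On the outer region, Hölder with $\||x|^{-\beta'}\|_{L^2(|x|>t^{1/\sigma})}\sim t^{\frac1\sigma(\frac n2-\beta')}$ and the weighted estimate \eqref{eq:2.3} (with $\gamma=1$, $\nu=2$, and an auxiliary $\beta'\in(\frac n2,\min(\beta,\frac n2+\sigma))$ chosen to respect \eqref{eq:2.2}) give a bounded contribution, because the growth $\langle t\rangle^{\frac1\sigma(\beta'-\frac n2)}$ of $\||x|^{\beta'}\mathcal S_\sigma(t)g\|_{L^2}$ precisely cancels the decay of the weight factor, and $\||x|^{\beta'}g\|_{L^2}\le C\|\langle\cdot\rangle^\beta g\|_{L^2}$. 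The intermediate ranges $p\in(1,2)$ and $p\in(2,\infty)$ then follow from the log-convexity bound $\|h\|_{L^p}\le\|h\|_{L^{p_0}}^{1-\theta}\|h\|_{L^{p_1}}^{\theta}$, using that the target exponent $-\frac n\sigma(1-\frac1p)$ is affine in $\frac1p$ and hence interpolates correctly while the data norm is common to both endpoints. The bound for $\widetilde{\mathcal S_\sigma}(t)f$ is identical with Theorem~\ref{Theorem:2.3} replacing Theorem~\ref{Theorem:2.1} and $\dot H^{s-\frac\sigma2}$ replaced by $\dot H^{s}$, and the combined estimate follows by applying the two bounds to $u_1$ and $u_0$ and summing.

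I expect the main obstacle to be the exponent bookkeeping in the Gagliardo--Nirenberg and spatial-splitting steps, and the verification that every data norm generated along the way---the $L^1$ norm, the weighted $L^{q_\sigma}$ and $L^{\nu}$ norms appearing in \eqref{eq:2.3}, and the high-frequency Sobolev norms---is controlled by $\|g\|_{\dot H^{s-\frac\sigma2}}+\|\langle\cdot\rangle^\beta g\|_{L^2}$ under the standing hypotheses $s>\frac n2$ and $\beta>\frac n2$; in particular the cancellation of the auxiliary weight $\beta'$ in the outer region and the compatibility with the constraint \eqref{eq:2.2} require care, but no genuinely new idea beyond those in \cite{IIW}.
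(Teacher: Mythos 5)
Your proposal is correct in substance and follows essentially the same route as the paper's proof: both establish the endpoint $p=\infty$ by Gagliardo--Nirenberg interpolation between $\|\mathcal S_\sigma(t)g\|_{L^2}$ and $\|\mathcal S_\sigma(t)g\|_{\dot H^s}$ with $\theta=\frac{n}{2s}$ (your exponent computation reproduces the paper's estimate \eqref{eq:3.14} exactly), establish the endpoint $p=1$ by playing the time growth $\langle t\rangle^{(\beta'-\frac n2)/\sigma}$ of the weighted norm from \eqref{eq:2.3} against the $L^2$ decay, and then interpolate in $p$ by log-convexity. The only packaging difference is at $p=1$: the paper quotes the inequality $\| f \|_{L^1}\leq C \| f \|_{L^2}^{1-\frac{n}{2 \beta}} \| |\cdot|^\beta f \|_{L^2}^{\frac{n}{2 \beta}}$ from \cite{IIW} and applies it to the solution, whereas you re-derive it by a spatial splitting at the self-similar radius --- the same argument with the optimizing radius made explicit. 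Your insertion of the auxiliary exponent $\beta'\in\left(\frac n2,\min\left(\beta,\frac n2+\sigma\right)\right)$ to respect \eqref{eq:2.2} is in fact a point of care that the paper's written proof skips when $\sigma\notin2\mathbb N$ and $\beta\geq\frac n2+\sigma$.

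Two repairs are needed, one real and one shared with the paper. First, your outer-region step fails as written for small $t$: $\| |x|^{-\beta'}\|_{L^2(|x|>t^{1/\sigma})}\sim t^{\frac1\sigma(\frac n2-\beta')}$ diverges as $t\to0$, while the weighted-solution bound is only $\langle t\rangle^{\frac1\sigma(\beta'-\frac n2)}\sim 1$ there, so the product behaves like $t^{\frac1\sigma(\frac n2-\beta')}$ and is unbounded near $t=0$. Split at radius $\langle t\rangle^{1/\sigma}$ instead, or treat $t\leq1$ separately via $\|h\|_{L^1}\leq C\left(\|h\|_{L^2}+\||\cdot|^{\beta'}h\|_{L^2}\right)$; then both regions are uniformly controlled and the cancellation you describe holds for $t\geq1$, which is all the asserted rate requires. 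Second, the worry you flag about $e^{-t/4}\|\langle\cdot\rangle^{\beta'}g\|_{L^{q_\sigma}}$ is legitimate: since $q_\sigma\in[1,2)$, H\"older gives $\|\langle\cdot\rangle^{\beta'}g\|_{L^{q_\sigma}}\leq C\|\langle\cdot\rangle^{\beta}g\|_{L^2}$ only when $\beta-\beta'>\frac{\min(n,\sigma)}{2}$, which the hypothesis $\beta>\frac n2$ alone does not supply. However, the paper's own proof carries the identical term (it enters through the application of \eqref{eq:2.3} and is absorbed into the caveat that the right-hand side be finite), so this is not a gap relative to the paper's argument; it affects both proofs equally and is closed either by the finiteness proviso in the statement of the Corollary or by a slightly stronger assumption on $\beta$.
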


This corollary generalizes the range of the exponent $p$ for the estimate \eqref{eq:1.8}
obtained by Theorem 2.1 in \cite{Kar00} to the wider range $p\in [1,\infty]$.

\subsection{Nonlinear Problem}
In this subsection, we state our well-posedness results to the nonlinear problem \eqref{eq:1.10}. The following theorem means large data local well-posedness for arbitrary data in a weighted Sobolev space.
\begin{Theorem}[Local well-posedness]
\label{Theorem:2.5}
Let $n\in \mathbb{N}$, $\sigma>0$, $s \geq 0$ and $\alpha \ge 0$ with
	\begin{align}
	\alpha
	<
	\begin{cases}
	\infty
	&\mathrm{if} \quad \sigma \in 2\mathbb{N},\\
	\frac n 2 + \sigma
	&\mathrm{otherwise}.
	\end{cases}
	\label{eq:2.4}
	\end{align}
We assume that $\rho>\max(1,s)$ satisfies
	\begin{align}
	\rho >
	1 + \frac{\min(n,\sigma)}{2\alpha + n}
	\label{eq:2.5}
	\end{align}
and
	\begin{align}
	\rho
	\begin{cases}
	\displaystyle
	< 1 + \frac{\min(n,\sigma)}{(n-2s)_+}
	&\mathrm{if} \ ``s \not\in \mathbb Z \ \mathrm{and} \ \rho-[s] < 2''
	\ \mathrm{or} \
	``s \in \mathbb Z \ \mathrm{and} \ \rho-s \leq 1''
	,\\[5pt]
	\displaystyle
	\leq 1 + \frac{\min(n,\sigma)}{(n-2s)_+}
	&\mathrm{otherwise},
	\end{cases}
	\label{eq:2.6}
	\end{align}
where $1/0$ is regarded as $\infty$.
Let $\overline{s}\in \left[0, \max(1,[s]-1)\right]$ and $\mathcal N \in C^{\max([s]-1,1)}(\mathbb{R})$ be a function satisfying the estimates
	\begin{align}
	\mathcal N^{(\overline s)}(0) & = 0, \label{eq:2.7}
	\\
	\left|\mathcal N^{(\overline s)}(z_1) - \mathcal N^{(\overline s)}(z_2)\right|
	&\leq C
	\begin{cases}
	(|z_1|+|z_2|)^{\rho-\overline s - 1} |z_1-z_2|,
	&\mathrm{if} \quad \rho -\overline s \geq 1,\\
	|z_1-z_2|^{\rho-\overline s},
	&\mathrm{if} \quad \rho -\overline s \leq 1
	\end{cases}\label{eq:2.8}
	\end{align}
for any $z_1,z_2\in \mathbb{R}$.
Then for any $r>0$ and the initial data 
	\[
	(u_0,u_1)
	\in B(r, (H^s \cap H^{0,\alpha}) \times (H^{s-\frac{\sigma}{2}} \cap H^{0,\alpha}))
	\]
there exists $T=T(r) > 0$ such that
the Cauchy problem \eqref{eq:1.10} admits
a unique local mild solution
	\[
	(u,\partial_tu)
	\in C( \lbrack 0,T); (H^s \cap H^{0,\alpha} ) \times (H^{s-\frac{\sigma}{2}} \cap H^{0,\alpha} ))
	\]
depending continuously on initial data in
$(H^s \cap H^{0,\alpha} )
	\times (H^{s-\frac{\sigma}{2}} \cap H^{0,\alpha} )$.
Moreover, let $T_m$ be the maximal existence time of the solution $u$. Then if $T_m<\infty$,
then $u$ satisfies that
	\[
	\liminf_{t \to T_m-0}\left( \| u(t) \|_{H^{s} \cap H^{0,\alpha}} + \| \partial_tu(t) \|_{H^{s-\frac{\sigma}{2}} \cap H^{0,\alpha}}\right)= \infty.
	\]
\end{Theorem}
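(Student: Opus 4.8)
The plan is to construct the solution by the contraction mapping principle applied to the map $\Phi$ from \eqref{eq:1.17} on a closed ball of the Banach space
\[
X_T := \{ u : (u,\partial_t u) \in C([0,T]; (H^s \cap H^{0,\alpha}) \times (H^{s-\frac\sigma2}\cap H^{0,\alpha})) \},
\]
equipped with $\|u\|_{X_T} := \sup_{0\le t\le T}( \|u(t)\|_{H^s\cap H^{0,\alpha}} + \|\partial_t u(t)\|_{H^{s-\frac\sigma2}\cap H^{0,\alpha}})$. It suffices to establish, for small $T=T(r)$, a self-mapping estimate showing that $\Phi$ sends the ball $B(2M,X_T)$ into itself, where $M$ bounds the norm of the linear evolution of the data, and a Lipschitz estimate showing that $\Phi$ is a contraction there. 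Since $[0,T]$ is bounded, every time weight $\langle t\rangle^{\pm\cdots}$ occurring in Theorems \ref{Theorem:2.1} and \ref{Theorem:2.3} is comparable to a constant depending only on $T$; hence no time decay is needed, and the smallness producing the contraction will come entirely from the length of the time interval in the Duhamel term.

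First I would bound the linear part $\widetilde{\mathcal S_\sigma}(t)u_0 + \mathcal S_\sigma(t)u_1$. The first estimates of Theorems \ref{Theorem:2.3} and \ref{Theorem:2.1}, used with $\gamma=2$, $s_1=s$, and $s_2=s$ for $u_0$ (resp. $s_2=s-\frac\sigma2$ for $u_1$), control the $\dot H^s$-seminorms of $\widetilde{\mathcal S_\sigma}u_0$ and $\mathcal S_\sigma u_1$ by $\|u_0\|_{\dot H^s}$ and $\|u_1\|_{\dot H^{s-\sigma/2}}$; the $L^2$-part is handled by the same theorems with $s_1=0$, and the two pieces are combined into the $H^s$-norm via the equivalence \eqref{eq:1.14} (with $p=q=2$, i.e. $H^s=B_{2,2}^s$). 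The weighted estimates with $\beta=\alpha$ control the $H^{0,\alpha}$-norm, the admissibility condition \eqref{eq:2.2} reducing in the relevant configuration to the hypothesis \eqref{eq:2.4}. The time derivative is treated through the operator identities $\partial_t\mathcal S_\sigma=\widetilde{\mathcal S_\sigma}-\mathcal S_\sigma$ and $\partial_t\widetilde{\mathcal S_\sigma}=-(-\Delta)^{\sigma/2}\mathcal S_\sigma$ (the latter from the equation solved by $\mathcal S_\sigma$), which again reduce the $H^{s-\sigma/2}$-estimates of $\partial_t$ to the same two theorems; the half-derivative gap between $H^s$ and $H^{s-\sigma/2}$ is matched exactly by the smoothing gain built into these estimates, so that e.g. $\partial_t\widetilde{\mathcal S_\sigma}u_0$ is controlled in $H^{s-\sigma/2}$ by $\|u_0\|_{H^s}$. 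Thus the linear evolution of the data is bounded on $[0,T]$ by $C\|(u_0,u_1)\|_{(H^s\cap H^{0,\alpha})\times(H^{s-\sigma/2}\cap H^{0,\alpha})}=:M$.

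The crux is the nonlinear estimate
\[
\| \mathcal N(u) \|_Y \le C \|u\|_{X_T}^\rho,
\qquad
\|\mathcal N(u)-\mathcal N(v)\|_Y \le C(\|u\|_{X_T}+\|v\|_{X_T})^{\rho-1}\|u-v\|_{X_T},
\]
where $Y$ is the data space in which $\mathcal N(u(\tau))$ is fed into $\mathcal S_\sigma(t-\tau)$, namely one whose $\dot H_1^{s-\sigma/2}$ (or $\dot B_{\gamma,2}^{s-\sigma/2}$) norm together with a weighted $L^\nu$ norm is finite. For the Sobolev part I would use a fractional chain (Moser-type) rule: differentiating $\mathcal N(u)$ up to order $[s]$, the hypotheses \eqref{eq:2.7} and \eqref{eq:2.8} on $\mathcal N^{(\overline s)}$ supply exactly the Hölder continuity needed for the top-order term, while $\rho>\max(1,s)$ guarantees that each factor $\mathcal N^{(k)}(u)=O(|u|^{\rho-k})$ remains a genuine power; distributing the product by Hölder's inequality and closing it with the Sobolev embeddings $H^s\hookrightarrow L^p$ produces the upper constraint \eqref{eq:2.6} on $\rho$. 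For the weighted part I would place the full weight $\langle\cdot\rangle^\alpha$ on a single factor of $u$, estimate that factor by $\|u\|_{H^{0,\alpha}}$ and the remaining $\rho-1$ factors by $\|u\|_{H^s}$ through a weighted Sobolev embedding; since $\mathcal N(u)=O(|u|^\rho)$ does lie in $L^1$, here the choice $\gamma=1$ (hence the full range \eqref{eq:2.4}) is available, and matching the admissible exponents $\gamma,\nu\in[1,2]$ against the effective dimension $n+2\alpha$ yields the lower constraint \eqref{eq:2.5}. The Lipschitz estimates follow by the same scheme applied to $\mathcal N(u)-\mathcal N(v)$, again through \eqref{eq:2.8}. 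This simultaneous control of fractional Sobolev regularity and the spatial weight, sharp enough to produce precisely \eqref{eq:2.5}--\eqref{eq:2.6}, is the main obstacle.

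With these estimates available, I would bound the Duhamel term $\int_0^t\mathcal S_\sigma(t-\tau)\mathcal N(u(\tau))\,d\tau$ in $X_T$ by applying the linear estimates of Theorem \ref{Theorem:2.1} to $g=\mathcal N(u(\tau))$, pulling the (bounded on $[0,T]$) time weights out of the integral, and integrating $\|\mathcal N(u(\tau))\|_Y\le C\|u\|_{X_T}^\rho$ over $\tau\in[0,t]$ to gain a factor $T$. Choosing $T=T(r)$ small then makes $\Phi$ simultaneously a self-map of $B(2M,X_T)$ and a contraction, so the Banach fixed point theorem yields the unique mild solution, and the same contraction estimate applied to two sets of data gives the continuous dependence. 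Finally, since $T(r)$ depends on the data only through the size of its norm, the blow-up alternative follows by the standard continuation argument: were $T_m<\infty$ and $\liminf_{t\to T_m-0}(\|u(t)\|_{H^s\cap H^{0,\alpha}}+\|\partial_tu(t)\|_{H^{s-\sigma/2}\cap H^{0,\alpha}})$ finite, one could restart the solution from a time near $T_m$ with a uniform existence time and extend it past $T_m$, contradicting maximality.
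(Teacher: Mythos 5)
Your overall architecture (contraction for $\Phi$ via the linear decay theorems plus a nonlinear estimate, smallness from the factor $T$ in the Duhamel term, continuation for the blow-up alternative) matches the paper's, but there is a genuine gap at the crux: you assume the full Lipschitz estimate
$\|\mathcal N(u)-\mathcal N(v)\|_{Y}\leq C(\|u\|_{X_T}+\|v\|_{X_T})^{\rho-1}\|u-v\|_{X_T}$
at the regularity level $s$. This is exactly what fails in the parameter range that distinguishes Theorem \ref{Theorem:2.5} from the earlier result in \cite{IIW}: the theorem permits $\rho$ as small as just above $\max(1,s)$, and then \eqref{eq:2.8} only gives H\"older continuity of $\mathcal N^{(\overline s)}$ of order $\rho-\overline s\leq 1$, so the difference $\mathcal N(u)-\mathcal N(v)$ cannot be estimated in the $\dot B^{s}$-type norm linearly in $\|u-v\|_{H^s}$. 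The paper's Lemma \ref{Lemma:6.3} and Corollary \ref{Corollary:6.6} prove only a H\"older-type difference bound with an extra term $C(\|u\|+\|v\|)^{\rho-d}\|u-v\|_{X^{s',\alpha,r}(T)}^{d}$, $d\leq 1$ given by \eqref{eq:6.27}, and when $d<1$ (the cases ``$s\in\mathbb Z$, $\rho-s\leq 1$'' and ``$s\notin\mathbb Z$, $\rho-[s]<2$'', which is precisely where \eqref{eq:2.6} carries the strict inequality) the paper does \emph{not} contract in the strong norm at all: it runs the fixed point in the weaker norm $X^{0,\alpha,r}(T)$, where Lemma \ref{Lemma:6.2} gives a true Lipschitz bound because $\rho>1$, uses that the ball of $X^{s,\alpha,r}(T)$ is closed in $X^{0,\alpha,r}(T)$, and recovers continuous dependence in the strong topology by Gagliardo--Nirenberg interpolation between the $X^{0}$ and $X^{s}$ levels. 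Without this two-norm device your scheme proves the theorem only under the stronger hypothesis $\rho-[s]\geq 2$ (or $s\in\mathbb Z$ with $\rho-s>1$), i.e.\ essentially the old range of \cite{IIW}, not the stated one.

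A second, smaller defect is your committed choice of exponents in the nonlinear/weighted estimate. Taking $\gamma=1$ for the weighted part forces, through the embedding requirement \eqref{eq:6.4} applied to $\|\mathcal N(u)\|_{L^{1}}\lesssim\|u\|_{L^{\rho}}^{\rho}$, the lower constraint $\rho>\frac{2n}{2\alpha+n}$. When $\sigma<n$ and $2\alpha+\sigma<n$ this is strictly stronger than \eqref{eq:2.5}, which only demands $\rho>1+\frac{\sigma}{2\alpha+n}$; so your claim that $\gamma=1$ ``yields the lower constraint \eqref{eq:2.5}'' is not accurate in that regime. The paper avoids this by splitting into the cases $\sigma\geq n$ (where $\gamma=1$ indeed works) and $n>\sigma$, where $\gamma$ must be chosen in the window \eqref{eq:7.4}, $\frac{2n}{2\alpha+n+\sigma}\leq\gamma<\frac{2n}{(n-2s)_{+}+\sigma}$, compatible with \eqref{eq:5.4} and \eqref{eq:6.25}; verifying that this window is nonempty under \eqref{eq:2.4}--\eqref{eq:2.6} is an explicit step of the paper's proof that your sketch elides. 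Your treatment of the linear part (including the identities $\partial_t\mathcal S_\sigma=\widetilde{\mathcal S_\sigma}-\mathcal S_\sigma$ and $\partial_t\widetilde{\mathcal S_\sigma}=-(-\Delta)^{\sigma/2}\mathcal S_\sigma$) and the continuation argument for the blow-up alternative are fine.
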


\begin{Remark}
The above local well-posedness result generalizes that of the
previous study \cite{IIW} (Theorem 1.1 in \cite{IIW}) to
the case of fractional Laplacian.
Not only that, it relaxes the range of the exponent $\rho$ of Theorem 1.1 in \cite{IIW},
in which
$\rho > [s] + 1$ was assumed (see Lemma \ref{Lemma:6.3}).
\end{Remark}


The following theorem means small data global well-posedness
in a weighted Sobolev space $H^{s} \cap H^{0,\alpha}$
to the problem \eqref{eq:1.10} and time decay estimates of the global solution
in the supercritical or critical case,
i.e. $\rho>1+\frac{\sigma}{n}$ with $\alpha\ge \frac{n}{2}$
or $\rho\ge 1+\frac{2\sigma}{2\alpha+n}$ with $\alpha\in \lbrack 0,\frac{n}{2})$:

\begin{Theorem}[Global well-posedness in the critical or supercritical case]
\label{Theorem:2.7}
Besides the assumptions in Theorem \ref{Theorem:2.5}, we assume that the exponent $\rho$ satisfies
	\begin{align*}
	\rho
	\begin{cases}
	\displaystyle
	\ge 1+ \frac{2\sigma}{2\alpha+n},
	&\text{if}\ \alpha\in \big\lbrack 0,\frac{n}{2}\big),\\[5pt]
	\displaystyle
	> 1+ \frac{\sigma}{n},
	&\text{if}\ \alpha\in \big\lbrack \frac{n}{2},\infty \big).
	\end{cases}
	\end{align*}
Then there exists $\varepsilon=\varepsilon(n,\sigma,\rho,\alpha,s)>0$ such that for any initial data
	\[
	(u_0,u_1)\in B(\varepsilon, (H^s \cap H^{0,\alpha}) \times (H^{s-\frac{\sigma}{2}} \cap H^{0,\alpha}) ),
	\]
the mild local solution $u$ to the Cauchy problem \eqref{eq:1.10} with the data $(u_0,u_1)$ obtained in Theorem \ref{Theorem:2.5} is extended globally, namely, $T_m=\infty$. Moreover, for any $r\ge 1$ with
	\[
	r
	\begin{cases}
	= 1
	&\mathrm{if} \quad \alpha\in (\frac{n}{2},\infty),\\
	> \frac{2n}{2\alpha + n}
	&\mathrm{if} \quad \alpha \in [0,\frac{n}{2}],
	\end{cases}
	\]
there exists a positive constant $C=C(n,\sigma,\rho,\alpha,s,r)>0$ such that for any $t\in [0,\infty)$, the following estimates hold:
    \begin{align}
    \label{eq:2.9}
    \|u(t)\|_{L^2}
    &\leq C\varepsilon \langle t \rangle^{\frac{n}{\sigma} ( \frac 1 2 - \frac 1 r)},\\
    \label{eq:2.10}
    \|u(t)\|_{\dot H^s}
    &\leq C\varepsilon \langle t \rangle^{ - \frac s \sigma + \frac{n}{\sigma} ( \frac 1 2 - \frac 1 r)},\\
    \|| \cdot |^\alpha u(t)\|_{L^2}
    &\leq C\varepsilon \langle t \rangle^{ \frac \alpha \sigma + \frac{n}{\sigma} ( \frac 1 2 - \frac 1 r)}.
    \label{eq:2.11}
    \end{align}
\end{Theorem}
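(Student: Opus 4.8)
The plan is to realize the global solution as a fixed point of the map $\Phi$ from \eqref{eq:1.17} in a time-weighted function space whose norm is tailored to the target decay rates \eqref{eq:2.9}--\eqref{eq:2.11}. Fixing an admissible exponent $r\in[1,2]$ and writing $\theta := \frac n\sigma(\frac12-\frac1r)$, I would introduce
\[
\|u\|_X := \sup_{t\ge 0}\Big(\langle t\rangle^{-\theta}\|u(t)\|_{L^2}+\langle t\rangle^{\frac s\sigma-\theta}\|u(t)\|_{\dot H^s}+\langle t\rangle^{-\frac\alpha\sigma-\theta}\big\||\cdot|^\alpha u(t)\big\|_{L^2}\Big)
\]
and work in the closed ball $\overline{B(2C_0\varepsilon,X)}$, where $C_0$ is the constant bounding the free evolution. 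The strategy is to show that $\Phi$ maps this ball into itself and is a contraction for $\varepsilon$ small; the unique fixed point then satisfies \eqref{eq:2.9}--\eqref{eq:2.11} by the very definition of $X$, agrees with the local mild solution of Theorem~\ref{Theorem:2.5} on every compact time interval by uniqueness, and therefore extends that solution to $T_m=\infty$.

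The first task is to bound the linear part $\widetilde{\mathcal S_\sigma}(t)u_0+\mathcal S_\sigma(t)u_1$ in $X$ by $C_0\varepsilon$. Here I would apply Theorems~\ref{Theorem:2.1} and~\ref{Theorem:2.3} with the choices $\gamma=\nu=r$, $\beta=\alpha$, $s_2=0$, and $s_1\in\{0,s\}$: with these parameters the polynomial factors produced by \eqref{eq:2.1} and \eqref{eq:2.3} are exactly $\langle t\rangle^{\theta}$, $\langle t\rangle^{-s/\sigma+\theta}$, and $\langle t\rangle^{\alpha/\sigma+\theta}$ for the $L^2$, $\dot H^s$, and weighted components, respectively, so it remains to dominate the appearing data norms by $\|(u_0,u_1)\|_{(H^s\cap H^{0,\alpha})\times(H^{s-\sigma/2}\cap H^{0,\alpha})}\lesssim\varepsilon$. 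This is where the admissibility range of $r$ enters: the weighted H\"older inequality gives the embedding $H^{0,\alpha}\hookrightarrow L^r$ precisely for $r>\tfrac{2n}{2\alpha+n}$ (and the $\dot H_1^0=L^1$ branch of the minimum in \eqref{eq:2.1} is used when $\alpha\ge n/2$ and $r=1$), while the $\dot H^s$ and $\dot H^{s-\sigma/2}$ norms control the remaining terms; the exponentially decaying remainders are harmless.

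The heart of the proof is the estimate of the Duhamel term $\int_0^t\mathcal S_\sigma(t-\tau)\mathcal N(u(\tau))\,d\tau$ in the three norms defining $X$. I would split the integral as $\int_0^{t/2}+\int_{t/2}^t$, applying the decay form of Theorems~\ref{Theorem:2.1}--\ref{Theorem:2.3} (where $t-\tau\sim t$) on the first piece and the short-time form (where $\tau\sim t$ supplies the decay) on the second. To feed $\mathcal N(u(\tau))$ into these linear estimates I use $|\mathcal N(u)|\lesssim|u|^\rho$ together with the Gagliardo--Nirenberg interpolation $\|u\|_{L^p}\lesssim\|u\|_{L^2}^{1-\vartheta}\|u\|_{\dot H^s}^{\vartheta}$ to bound $\|\mathcal N(u(\tau))\|_{L^r}\lesssim\|u(\tau)\|_{L^{\rho r}}^{\rho}$, and the H\"older splitting $\big\||\cdot|^\alpha\mathcal N(u)\big\|_{L^\nu}\lesssim\big\||\cdot|^\alpha u\big\|_{L^2}\|u\|_{L^{q}}^{\rho-1}$ for the weighted component; all factors are then controlled by $\|u\|_X^\rho$ at the cost of explicit powers of $\langle\tau\rangle$. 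After this reduction the whole estimate collapses to temporal integrals of the form $\int_0^t\langle t-\tau\rangle^{-a}\langle\tau\rangle^{-b}\,d\tau$, and the condition $\rho\ge 1+\frac{2\sigma}{2\alpha+n}$ (resp.\ $\rho>1+\frac\sigma n$) is exactly the threshold at which these integrals reproduce the weights $\langle t\rangle^{\theta}$, $\langle t\rangle^{-s/\sigma+\theta}$, $\langle t\rangle^{\alpha/\sigma+\theta}$ demanded by $X$.

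The main obstacle is twofold, and both parts live in this last step. First, one must choose the interpolation and H\"older exponents $\vartheta,p,q$ (and verify $\gamma,\nu\in[1,2]$, $\beta$ obeying \eqref{eq:2.2}, and $\vartheta\in[0,1]$) simultaneously compatibly with the constraints already imposed in Theorem~\ref{Theorem:2.5}; the hypotheses \eqref{eq:2.5}--\eqref{eq:2.6} on $\rho$ together with \eqref{eq:2.7}--\eqref{eq:2.8} are what keep this system solvable. Second, and more delicate, is the borderline behaviour in the critical case $\rho=1+\frac{2\sigma}{2\alpha+n}$: there the integral $\int_0^{t/2}\langle\tau\rangle^{-b}\,d\tau$ sits at the logarithmically divergent exponent $b=1$, which would spoil the target rate. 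This is precisely why $r$ is taken strictly greater than $\frac{2n}{2\alpha+n}$: the strict inequality pushes $b$ strictly below $1$, so the integral converges and the critical power closes without a logarithmic loss. Once the self-mapping and contraction bounds are in place, continuity in time is inherited from the local theory of Theorem~\ref{Theorem:2.5} on each $[0,T]$, completing the argument.
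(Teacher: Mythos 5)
Your skeleton is in fact the paper's: your norm $\|\cdot\|_X$ is literally the norm $\|\cdot\|_{X^{s,\alpha,r}(\infty)}$ of Section 5 (with $-\theta=\mu(2,r)$ in the paper's notation), your splitting of the Duhamel integral at $t/2$ is exactly the decomposition $I_1,I_2$ in the proof of Lemma \ref{Lemma:5.1}, and the paper's proof of Theorem \ref{Theorem:2.7} is precisely ``linear bound \eqref{eq:7.1} plus Lemma \ref{Lemma:5.1} plus Corollary \ref{Corollary:6.6}.'' The genuine gap is your handling of the critical case, where your exponent bookkeeping is inverted. Writing $\mu(q_1,q_2)=\frac n\sigma(\frac1{q_2}-\frac1{q_1})$ and using $\|\mathcal N(u(\tau))\|_{L^\gamma}\lesssim\langle\tau\rangle^{-\mu(\gamma,\frac r\rho)}\|u\|_X^\rho$, the piece over $[0,t/2]$ contributes $\langle t\rangle^{-\mu(r,\gamma)}\int_0^{t/2}\langle\tau\rangle^{-b}\,d\tau$ with $b=\mu(\gamma,\frac r\rho)$ and $b+\mu(r,\gamma)=\frac{n(\rho-1)}{\sigma r}$. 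The integral $\int_0^{t/2}\langle\tau\rangle^{-b}\,d\tau$ is uniformly bounded iff $b>1$; for $b<1$ it \emph{grows} like $\langle t\rangle^{1-b}$, so your claim that pushing $b$ strictly below $1$ ``makes the integral converge'' is backwards. The total exponent is $\big(1-\frac{n(\rho-1)}{\sigma r}\big)_+$ (away from the logarithmic case), so closing the fixed point forces $\rho\ge 1+\frac{r\sigma}{n}$, i.e.\ $r\le\frac{n(\rho-1)}{\sigma}$: \emph{enlarging} $r$ makes the Duhamel bound worse, and at $\rho=1+\frac{2\sigma}{2\alpha+n}$ your range $r>\frac{2n}{2\alpha+n}$ produces the growing factor $\langle t\rangle^{1-\frac{2n}{(2\alpha+n)r}}$, so your contraction does not close; as written your argument only yields the strictly supercritical case. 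The mechanism in Lemma \ref{Lemma:5.1} for the borderline $\rho=1+\frac{r\sigma}{n}$ is entirely different: one does not move $r$ upward but chooses the auxiliary exponent $\gamma$ \emph{strictly smaller} than $r$, thereby avoiding the logarithmic case $(\gamma,\rho)=(r,1+\frac{r\sigma}{n})$ in \eqref{eq:5.6}, the prefactor $\langle t\rangle^{-\mu(r,\gamma)}$ absorbing the borderline divergence (cf.\ \eqref{eq:5.14}--\eqref{eq:5.15}). The strict inequality $r>\frac{2n}{2\alpha+n}$ plays a different role altogether: it is hypothesis \eqref{eq:5.3} of Lemma \ref{Lemma:5.1} and is what allows the free evolution to be measured in $X^{s,\alpha,r}$ from data in $H^{0,\alpha}$; the decay \eqref{eq:2.9}--\eqref{eq:2.11} for the whole stated range of $r$ then follows because a bound in $X^{s,\alpha,r'}(\infty)$ for a smaller admissible $r'$ implies the rates for every $r\ge r'$.

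Beyond this, three technical points are glossed in a way that would not survive writing out. First, feeding $\mathcal N(u)$ into the linear estimates via $|\mathcal N(u)|\lesssim|u|^\rho$ and Gagliardo--Nirenberg cannot produce the $\dot H^s$ component once $s>\sigma/2$: both the exponential high-frequency remainder in \eqref{eq:2.1} and the piece over $[t/2,t]$ require control of $\|\mathcal N(u)\|_{\dot B_{q_\sigma,2}^s}$, i.e.\ fractional derivatives of the nonlinearity under the weak regularity \eqref{eq:2.7}--\eqref{eq:2.8}; this is the role of the $Y$-norm and of Lemmas \ref{Lemma:4.2}, \ref{Lemma:6.3} and \ref{Lemma:6.5} (fractional chain and Leibniz rules), where the hypotheses \eqref{eq:2.5}--\eqref{eq:2.6} are actually consumed. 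Second, your parameter choice $\nu=r$ in \eqref{eq:2.3} requires $\||\cdot|^\alpha u_1\|_{L^r}$, which the data class $H^{0,\alpha}$ does not control for $r<2$ (take $\nu=2$ instead), and $r=1$ needs $\alpha>n/2$ strictly, since $H^{0,n/2}\not\hookrightarrow L^1$. Third, when $\rho-[s]<2$ (or $\rho-s\le1$ with $s\in\mathbb Z$), the difference estimate \eqref{eq:7.3} holds only with H\"older exponent $d<1$, so $\Phi$ is not a contraction in $X$ itself; as in the paper's proof of Theorem \ref{Theorem:2.5}, one must contract in the weaker norm $X^{0,\alpha,r}$ on a closed ball of $X^{s,\alpha,r}$.
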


\section{Proof of Theorems \ref{Theorem:2.1} and \ref{Theorem:2.3}}

In this section,
we prove time decay estimates of the free solution to the problem  (\ref{eq:1.1}) (Theorems \ref{Theorem:2.1} and \ref{Theorem:2.3} and Corollary \ref{Corollary:2.4}).
We especially show a pointwise estimate (Lemma \ref{Lemma:3.2}) for the low frequency part of the free solution
by using the homogeneous Besov norms (see \ref{eq:1.15}-\ref{eq:1.16}).
The idea to apply homogeneous Besov norms goes back at least as far as \cite{SS}.
The lemma below gives a sufficient condition of propagators
to apply their pointwise estimates.

\begin{Lemma}
\label{Lemma:3.1}
Let $m\in \mathbb{N}$ with $m \geq 1$ and $s\in (0,2m)$.
Let $f \in C(\mathbb R^n) \cap C^{2m}(\mathbb R^n \backslash \{0\})$.
If there exists a constant $C_0$ such that the estimates
	\begin{align}
	|f(\xi)|
	&\leq C_0 |\xi|^{s-n},
	\label{eq:3.1}\\
	\max_{|\alpha|=2m} |\partial^\alpha f(\xi)|
	&\leq C_0 |\xi|^{s-n-2m}
	\label{eq:3.2}
	\end{align}
hold for any $\xi \neq 0$,
then $f$ belongs to $\dot B_{1,\infty}^s(\mathbb{R}^n)$ with the estimate $\|f\|_{\dot B_{1,\infty}^s}\le C_1$, where $C_1$ depends only on $s$, $m$ and $C_0$.
\end{Lemma}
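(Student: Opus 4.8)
The plan is to verify membership in $\dot B_{1,\infty}^s$ through the difference characterization \eqref{eq:1.16} rather than the Littlewood--Paley definition, since the hypotheses control $f$ and its top-order derivatives pointwise. I would apply \eqref{eq:1.16} with difference order $2m$ in place of the integer appearing there — legitimate because the hypothesis $s\in(0,2m)$ is exactly the admissibility condition $0\le s<2m$ — and take $p=1$, $q=\infty$. Since $f$ is a genuine function with the stated decay it defines a tempered distribution, so it suffices to prove the pointwise-in-$y$ bound $\|(\tau_{y/2}-\tau_{-y/2})^{2m}f\|_{L^1}\le C|y|^{s}$ for every $y\neq0$; then the supremum over $|y|\le\rho$ is at most $C\rho^{s}$, and the $L^\infty(\tfrac{d\rho}{\rho})$ norm in \eqref{eq:1.16} yields $\|f\|_{\dot B_{1,\infty}^s}\le C_1$.

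Fix $y\neq0$ and split $\mathbb R^n=\{|\xi|\le 2m|y|\}\cup\{|\xi|>2m|y|\}$. On the near region I would expand the symmetric difference by the binomial theorem,
\[
(\tau_{y/2}-\tau_{-y/2})^{2m}f(\xi)=\sum_{k=0}^{2m}\binom{2m}{k}(-1)^k f\big(\xi+(m-k)y\big),
\]
estimate each term by \eqref{eq:3.1}, and change variables $\eta=\xi+(m-k)y$; each shifted integral is then dominated by $C_0\int_{|\eta|\lesssim|y|}|\eta|^{s-n}\,d\eta$, and in polar coordinates this equals a constant times $|y|^{s}$, the integral converging at the origin precisely because $s>0$.

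On the far region I would use the iterated fundamental-theorem-of-calculus representation
\[
(\tau_{y/2}-\tau_{-y/2})^{2m}f(\xi)=\int_{[-1/2,1/2]^{2m}}(y\cdot\nabla)^{2m}f\big(\xi+(t_1+\dots+t_{2m})y\big)\,dt_1\cdots dt_{2m},
\]
which is valid here because the averaging segment $\{\xi+ty:|t|\le m\}$ (with $t:=t_1+\dots+t_{2m}$) avoids the origin: for $|\xi|>2m|y|$ and $|t|\le m$ one has $|\xi+ty|\ge|\xi|-m|y|\ge|\xi|/2>0$, so $f$ is $C^{2m}$ there. Bounding $|(y\cdot\nabla)^{2m}f|\le C|y|^{2m}\max_{|\alpha|=2m}|\partial^\alpha f|$ and inserting \eqref{eq:3.2} together with $|\xi+ty|\ge|\xi|/2$ gives the integrand $C|y|^{2m}|\xi|^{s-n-2m}$; integrating over $|\xi|>2m|y|$ yields $C|y|^{2m}\cdot|y|^{s-2m}=C|y|^{s}$, the integral converging at infinity precisely because $s<2m$. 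Adding the two contributions gives $\|(\tau_{y/2}-\tau_{-y/2})^{2m}f\|_{L^1}\le C|y|^{s}$ with $C=C(n,s,m,C_0)$.

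The main obstacle — and essentially the only point requiring care — is the far-region step: one must justify passing from the $2m$-th finite difference to the $2m$-th derivative via the integral representation, which is legitimate only where $f$ is $C^{2m}$, i.e. away from the origin. This is exactly why the splitting threshold is taken proportional to $|y|$, ensuring the averaging segment stays in $\{|\zeta|\ge|\xi|/2\}$. The two borderline convergences are $s>0$ near the origin and $s<2m$ at infinity, matching the hypothesis $s\in(0,2m)$ with nothing to spare. (The final constant also depends on $n$, which is harmless since $n$ is fixed.)
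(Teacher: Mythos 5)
Your proposal is correct and follows essentially the same route as the paper's proof: both reduce the claim via the difference characterization \eqref{eq:1.16} to the bound $\|(\tau_{y/2}-\tau_{-y/2})^{2m}f\|_{L^1}\le C|y|^{s}$, split at $|\xi|\sim 2m|y|$, handle the near region directly from \eqref{eq:3.1} using $s>0$, and handle the far region by the iterated fundamental-theorem-of-calculus representation of the $2m$-th difference together with \eqref{eq:3.2}, the lower bound $|\xi+ty|\ge|\xi|-m|y|\ge|\xi|/2$, and $s<2m$. Your explicit binomial expansion in the near region and the justification that the averaging segment avoids the origin are just slightly more detailed renderings of the same steps the paper carries out.
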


\begin{proof}
It is enough to show that the estimate
	\[
	\|(\tau_{\eta/2} - \tau_{-\eta/2})^{2m}f\|_{L^1}
	\leq C_1 |\eta|^s
	\]
holds for any $\eta\in \mathbb{R}^n$.
Indeed, \eqref{eq:1.16} and the estimate above imply
that $f \in \dot B_{1,\infty}^s(\mathbb{R}^n)$ with the estimate $\|f\|_{\dot B_{1,\infty}^s}\le C_1$.

At first,
\eqref{eq:3.1} and the positivity of $s$ imply that
the estimates
	\begin{align*}
	&\int_{|\xi| < 2m |\eta|}
	|(\tau_{\eta/2} - \tau_{-\eta/2})^{2m}f(\xi)| d \xi\\
	&\leq C \int_{|\xi|< 3m |\eta|} |f(\xi)| d \xi
	\leq C \int_{|\xi|< 3m |\eta|} |\xi|^{s-n} d \xi
	\leq C |\eta|^s
	\end{align*}
hold.
Moreover
the identities
	\begin{align*}
	(\tau_{\eta/2} - \tau_{-\eta/2})^{2m}f(\xi)
	&= (\tau_{\eta/2} - \tau_{-\eta/2})^{2m-1} \int_{-1/2}^{1/2} \nabla f(\xi+ \theta_1 \eta) d \theta_1 \cdot \eta\\
	&= \sum_{|\alpha|=2m}
	\int_{\theta \in [-1/2,1/2]^{2m}} \partial^\alpha f\bigg( \xi+ \eta \sum_{j=1}^{2m} \theta_j \bigg)
	d \theta \eta^\alpha
	\end{align*}
follow from the fundamental theorem of calculus.
In the case of $|\xi|/2m > |\eta|$, the assumption \eqref{eq:3.2} implies that the following estimates hold
for any multi-indices $\alpha$ with $|\alpha|=2m$:
	\begin{align*}
	\bigg|\partial^\alpha f \bigg(\xi+ \eta \sum_{j=1}^{2m} \theta_j\bigg)\bigg|
	&\leq C \bigg|\xi + \eta \sum_{j=1}^{2m} \theta_j \bigg|^{s-n-2m}\\
	&\leq C (|\xi| - m |\eta|)^{s-n-2m}\\
	&\leq C |\xi|^{s-n-2m}.
	\end{align*}
Therefore, we have
	\begin{align*}
	\int_{|\xi|/2m \geq |\eta|}
	|(\tau_{\eta/2} - \tau_{-\eta/2})^{2m} f(\xi)| d \xi
	\leq C |\eta|^{2m} \int_{|\xi|/2m > |\eta|} |\xi|^{s-n-2m} d \xi
	\leq C |\eta|^s,
	\end{align*}
where we have used the assumption $s < 2m$.
\end{proof}

Now we show the pointwise estimate of
the low frequency part of the free solutions.

\begin{Lemma}
\label{Lemma:3.2}
Let $\sigma > 0$. Then there exists a positive constant $C=C(\sigma)>0$ such that the estimate holds:
	\begin{align}
	\left| \mathcal F^{-1} \left( e^{- \frac t 2} L_\sigma(t) \hat \psi(2^{\frac 2 \sigma} \cdot )\right )(x) \right|
	\leq C
	\begin{cases}
		t \langle x \rangle^{-n-\sigma}
		& \mathrm{if} \quad t \leq 1,\\
		t^{-n/\sigma}
		\langle t^{-1/\sigma} x \rangle^{-n-\sigma}
		& \mathrm{if} \quad t \geq 1.
	\end{cases}
	\label{eq:3.3}
	\end{align}
Moreover, for $\sigma\in 2\mathbb{N}$,
for any positive number $j$,
there exists a positive constant $C_j$ such that
the following estimate holds:
	\begin{align}
	\left| \mathcal F^{-1} \left( e^{- \frac t 2} L_\sigma(t) \hat \psi(2^{\frac 2 \sigma} \cdot )\right )(x) \right|
	\leq C_j
	\begin{cases}
		t \langle x \rangle^{-j}
		& \mathrm{if} \quad t \leq 1,\\
		t^{-n/\sigma}
		\langle t^{-1/\sigma} x \rangle^{-j}
		& \mathrm{if} \quad t \geq 1.
	\end{cases}
	\label{eq:3.4}
	\end{align}
\end{Lemma}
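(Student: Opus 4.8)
The plan is to estimate the kernel $K(t,x):=\mathcal F^{-1}\big(e^{-t/2}L_\sigma(t)\hat\psi(2^{2/\sigma}\cdot)\big)(x)$ by first rewriting its symbol $m(t,\xi):=e^{-t/2}L_\sigma(t,\xi)\hat\psi(2^{2/\sigma}\xi)$, for which $K(t,x)=\mathcal F^{-1}[m(t,\cdot)](x)$, in a form that separates a diffusive from an exponentially damped component. On the support of $\hat\psi(2^{2/\sigma}\cdot)$ one has $|\xi|^\sigma<1/4$, so only the $\sinh$-branch of $L_\sigma$ occurs, and writing $\lambda(\xi):=\sqrt{1/4-|\xi|^\sigma}$ and $A_\pm(\xi):=\tfrac12\pm\lambda(\xi)$,
\[
e^{-t/2}\frac{\sinh(t\lambda)}{\lambda}=\frac{e^{-tA_-}-e^{-tA_+}}{2\lambda}.
\]
Since $A_+A_-=|\xi|^\sigma$ and $A_++A_-=1$, one has $A_-=|\xi|^\sigma/(\tfrac12+\lambda)\sim|\xi|^\sigma$, so $e^{-tA_-}$ behaves like the fractional heat symbol $e^{-t|\xi|^\sigma}$, while $A_+\ge\tfrac12$ makes $e^{-tA_+}$ exponentially small. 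The cutoff keeps $\lambda$ bounded below, so $1/(2\lambda)$ is a harmless smooth factor, and the only loss of smoothness of $m$ comes from $|\xi|^\sigma$ at the origin; in particular for $\sigma\in2\mathbb N$ the symbol is smooth.

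Second, I would split into the regimes $t\le1$ and $t\ge1$. For $t\ge1$ I rescale by $\xi=t^{-1/\sigma}\eta$; this produces the prefactor $t^{-n/\sigma}$ and reduces \eqref{eq:3.3}--\eqref{eq:3.4} to a $t$-uniform bound $|\tilde K(y)|\le C\langle y\rangle^{-n-\sigma}$ (resp.\ $\le C_j\langle y\rangle^{-j}$) for $\tilde K:=\mathcal F^{-1}[\tilde m]$, $\tilde m(\eta):=m(t,t^{-1/\sigma}\eta)$, evaluated at $y=t^{-1/\sigma}x$. The virtue of this scaling is that $tA_-(t^{-1/\sigma}\eta)\sim|\eta|^\sigma$, so $\tilde m$ and, crucially, its derivatives become uniformly comparable to those of $e^{-|\eta|^\sigma}$: the powers of $t$ generated by $\partial_\eta^\alpha$ and by differentiating the exponent cancel exactly. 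For $t\le1$ no scaling is needed; here I factor $\sinh(t\lambda)/\lambda=t\cdot\sinh(t\lambda)/(t\lambda)$ and write $m=t\,n(t,\cdot)$ with $n$ smooth in $|\xi|^\sigma$ and compactly supported, uniformly for $t\in[0,1]$. The extra factor $t$ is exactly the one appearing in \eqref{eq:3.3}--\eqref{eq:3.4}, reflecting the cancellation $e^{-tA_-}-e^{-tA_+}=O(t)$.

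The decay itself I would obtain in two pieces. The boundedness needed for $|y|\lesssim1$ follows from $\|\tilde m\|_{L^1}\lesssim1$ (and $\|n(t,\cdot)\|_{L^1}\lesssim1$) via $\|\mathcal F^{-1}g\|_{L^\infty}\le(2\pi)^{-n}\|g\|_{L^1}$. For $|y|\gtrsim1$ I would decompose the symbol into a part that is smooth with rapidly decaying inverse Fourier transform (handled by plain integration by parts; this is the only part present when $\sigma\in2\mathbb N$, giving the arbitrary decay $\langle y\rangle^{-j}$ of \eqref{eq:3.4}) and a remainder carrying the $|\eta|^\sigma$-singularity at the origin, to which Lemma \ref{Lemma:3.1} applies with $s=n+\sigma$ and $2m$ the least even integer exceeding $n+\sigma$: the bounds \eqref{eq:3.1}--\eqref{eq:3.2}, namely $|\,\cdot\,|\lesssim|\eta|^{\sigma}$ and $|\partial^{2m}(\cdot)|\lesssim|\eta|^{\sigma-2m}$, hold uniformly in $t$ after rescaling, placing the remainder in $\dot B^{n+\sigma}_{1,\infty}$ uniformly. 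Finally I would convert this Besov regularity into spatial decay: using the modulation identity $\mathcal F^{-1}[(\tau_{y/2}-\tau_{-y/2})^{2m}g](x)=(-2i\sin(\tfrac{x\cdot y}2))^{2m}\mathcal F^{-1}[g](x)$ together with \eqref{eq:1.16}, one gets $|2\sin(\tfrac{x\cdot y}2)|^{2m}\,|\mathcal F^{-1}[g](x)|\lesssim\|g\|_{\dot B^{n+\sigma}_{1,\infty}}|y|^{\,n+\sigma}$ for every $y$, and optimizing in $y$ (take $y\parallel x$ with $x\cdot y=\pi$, $|y|=\pi/|x|$) yields $|\mathcal F^{-1}[g](x)|\lesssim|x|^{-(n+\sigma)}$.

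The main obstacle is the sharp fractional exponent for $\sigma\notin2\mathbb N$: elementary integration by parts only gives $\langle\cdot\rangle^{-(n+\sigma)+\varepsilon}$, because $\partial_\eta^\alpha|\eta|^\sigma\sim|\eta|^{\sigma-|\alpha|}$ ceases to be integrable near the origin at $|\alpha|=n+\sigma$, while the full symbol does not vanish at $\eta=0$, so Lemma \ref{Lemma:3.1} cannot be applied to it directly. Reaching the endpoint therefore hinges on isolating the singular part that vanishes like $|\eta|^\sigma$, on checking that the pointwise derivative bounds \eqref{eq:3.1}--\eqref{eq:3.2} survive with constants independent of $t$ after the scaling (and likewise uniformly for $t\in[0,1]$ in the unscaled regime), and on controlling the smooth-but-nondecaying factors $1/(2\lambda)$, $\sinh(t\lambda)/(t\lambda)$ and $e^{-tA_+}$ so that the decomposition has genuinely global bounds rather than merely local ones near $\eta=0$.
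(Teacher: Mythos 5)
Your proposal is correct, including the sharp fractional exponent, but it arranges the endgame differently from the paper, so a comparison is in order. The skeleton is shared: the split into $t\leq 1$ and $t\geq 1$, the factorization $L_\sigma=t\,\sinh(t\lambda)/(t\lambda)$ for small times, the parabolic rescaling $\xi=t^{-1/\sigma}\eta$ for large times, the $t$-uniform symbol bounds $|\partial^k(\cdot)|\lesssim |\eta|^{\sigma-k}e^{-|\eta|^\sigma/2}$ (your identity $e^{-t/2}\sinh(t\lambda)/\lambda=\left(e^{-tA_-}-e^{-tA_+}\right)/(2\lambda)$ with $tA_-\sim|\eta|^\sigma$ is just an explicit form of the paper's exponent inequality $\frac t2-t\sqrt{\frac14-\frac{|\eta|^\sigma}{t}}\geq|\eta|^\sigma$ in \eqref{eq:3.8}), and the combination of Lemma \ref{Lemma:3.1} with the difference characterization \eqref{eq:1.16}. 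The genuine divergence is this: the paper never splits the symbol into a smooth part plus a part vanishing like $|\eta|^\sigma$. Instead it writes $\langle x\rangle^{n+\sigma}\lesssim 1+\langle x\rangle^{\sigma}\sum_{\ell}|x_\ell|^n$, trades $x_\ell^n$ for $\partial_\ell^n$ on the Fourier side via Hausdorff--Young (see \eqref{eq:3.5} and \eqref{eq:3.7}), and applies Lemma \ref{Lemma:3.1} with $s=\sigma$ to the differentiated symbol $\partial_\ell^n\Lambda_\sigma$: differentiation automatically annihilates the nonvanishing value of the symbol at the origin, so the obstruction you correctly identify (the full symbol fails \eqref{eq:3.1} at $s=n+\sigma$) simply never arises, and the $B_{1,\infty}^{\sigma}$-norm-to-$\langle x\rangle^{\sigma}$-decay step hidden in \eqref{eq:3.5} is exactly your modulation-plus-optimization argument run at order $\sigma$ instead of $n+\sigma$. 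Your route, applying Lemma \ref{Lemma:3.1} once at $s=n+\sigma$ with $2m>n+\sigma$, is equally valid and makes the conversion mechanism and the heat-kernel structure more explicit, but it incurs one extra obligation you only gesture at: after rescaling, the subtracted smooth piece is essentially $c(t)\hat\psi\left(2^{\frac2\sigma}t^{-\frac1\sigma}\eta\right)$ with $c(t)=1-e^{-t}$ not small, whose inverse transform has sup-norm of size $t^{n/\sigma}$ concentrated at scale $|y|\lesssim t^{-1/\sigma}$; it is admissible uniformly in $t\geq1$ only because $t^{n/\sigma}\left(t^{1/\sigma}|y|\right)^{-N}\leq |y|^{n-N}$ on $|y|\geq1$ for $N>n$, consistent with your restriction of the decomposition to $|y|\gtrsim1$ (with the $|y|\lesssim1$ regime covered by the $L^1$ bound, which holds for the full symbol thanks to the $e^{-|\eta|^\sigma}$ damping, not for the smooth piece alone) --- this verification should be made explicit, as should the fact that the fractional singularity of $1/(2\lambda)$ at the origin carries a factor $|\eta|^\sigma/t$ and hence belongs in the singular remainder. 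For $\sigma\in2\mathbb N$ the two proofs coincide: the symbol is smooth and $L^1$ bounds on all derivatives give the arbitrary decay in \eqref{eq:3.4}.
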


\begin{Remark}
Lemma \ref{Lemma:3.2} implies that
$\mathcal F^{-1} (e^{-\frac t 2} L_\sigma(t) \hat \psi(2^{\frac 2 \sigma} \cdot))$
is controlled by the heat kernel $G_\sigma(t)$ with $\sigma < 2$, which is defined by (\ref{eq:1.7}), 
pointwisely with some constants.
For estimates of $G_\sigma$,
we refer the reader \cite{V} and references therein.
\end{Remark}

\begin{Remark}
In Proposition 7 in \cite{CFZ14},
a similar pointwise estimete as \eqref{eq:3.3} is stated.
However, we give a proof of it for completeness of the paper. 
\end{Remark}

\begin{proof}[Proof of Lemma \ref{Lemma:3.2}]
We employ the approach for the proof of \cite[Theorem 2.2]{SS}
with dividing the proof into two cases where $t \leq 1$ and $t \geq 1$.

\paragraph{Case 1: $t \leq 1$}
By the Hausdorff-Young inequality,
we have
	\begin{align}
	&t^{-1} \langle x \rangle^{\sigma+n}
	\left| \mathcal F^{-1} \left(e^{- \frac t 2} L_\sigma(t) \hat \psi(2^{\frac 2 \sigma} \cdot)\right) \right|
	\nonumber\\
	&\leq t^{-1}
	\left| \mathcal F^{-1} (e^{- \frac t 2} L_\sigma(t) \hat \psi(2^{\frac 2 \sigma} \cdot)) \right|
	+ t^{-1} \langle x \rangle^{\sigma}
	\sum_{\ell=1}^n \left| x_\ell^n \mathcal F^{-1} (e^{- \frac t 2} L_\sigma(t) \hat \psi(2^{\frac 2 \sigma} \cdot)) \right|
	\nonumber\\
	&\leq C
	\left\| t^{-1} e^{- \frac t 2} L_\sigma(t) \hat \psi(2^{\frac 2 \sigma} \cdot) \right\|_{L^1}
	+ \sum_{\ell=1}^n
	\left\|\partial_\ell^n t^{-1} e^{- \frac t 2} L_\sigma(t) \hat \psi(2^{\frac 2 \sigma} \cdot)
	\right\|_{B_{1,\infty}^{\sigma}}
	\label{eq:3.5}
	\end{align}
for any $t > 0$ and $x \in \mathbb R^n$.
Next, we note that
the estimate
	\begin{align}
	\bigg| \frac{d^k}{d \vartheta^k} \frac{\sinh(t \sqrt{\vartheta})}{t \sqrt{\vartheta}} \bigg|
	\leq 2^{1-k} e^{t \sqrt \vartheta}
	\label{eq:3.6}
	\end{align}
holds for any $\vartheta \geq 0$, $0 < t < 1$, and $k \in \mathbb Z_{\geq 0}$.
Indeed,
the identities
	\begin{align*}
	\frac{\sinh(t \sqrt \vartheta)}{t \sqrt \vartheta}
	&= \int_0^1 \cosh(\theta_0 t \sqrt \vartheta) d \theta_0\\
	\frac{d}{d \vartheta} \frac{\sinh(t \sqrt \vartheta)}{t \sqrt \vartheta}
	&= 2^{-1} t \int_0^1 \theta_0 \frac{\sinh(\theta_0 t \sqrt \vartheta)}{\sqrt \vartheta} d \theta_0
	= 2^{-1} t^2 \int_0^1 \int_0^1 \theta_0^2 \cosh(\theta_0 \theta_1 t \sqrt \vartheta) d \theta_1 d \theta_0
	\end{align*}
hold.
Then it may be shown inductively that
the identity
	\[
	\frac{d^k}{d \vartheta^k} \frac{\sinh(t \sqrt \vartheta)}{t \sqrt \vartheta}
	=
	2^{-k} t^{2k} \int_{\theta \in [0,1]^{k+1}}
	\prod_{j=0}^k \theta_j^{2(k-j)}
	\cosh\bigg(t \sqrt \vartheta \prod_{j=0}^k \theta_j \bigg) d \theta
	\]
holds for any $k \in \mathbb Z_{\geq 0}$,
which implies \eqref{eq:3.6}.
Therefore, for $k \in \mathbb Z_{\geq 0}$,
there exists a constant $C_k$ depending only on $k$ such that
	\[
	\sup_{0 < t \leq 1}
	\left| \partial_\ell^k (t^{-1} e^{-\frac t 2} L_\sigma(t,\xi) \hat \psi(2^{\frac 2 \sigma} \xi))\right|
	\leq C_k |\xi|^{\sigma-k} \chi_{B\left(2^{-\frac 2 \sigma}\right)}.
	\]
This and Lemma \ref{Lemma:3.1} imply
that the RHS of \eqref{eq:3.5} is uniformly bounded with respect to $t\in [0,1]$.
Therefore, the first estimate of \eqref{eq:3.3} hold.
When $\sigma$ is even, \eqref{eq:3.4} follows similarly from the fact that
the estimate
	\[
	\sup_{0 < t < 1}
	\left\| \partial_\ell^k (t^{-1} e^{-\frac t 2} L_\sigma(t) \hat \psi(2^{\frac 2 \sigma} \cdot))\right\|_{L^1}
	< \infty
	\]
holds for any non-negative integer $k$.

\bigskip

\paragraph{Case 2: $t \geq 1$}
We set
	\[
	\Lambda_\sigma(t,\eta)
	:= e^{-t/2} L_\sigma\left(t,t^{- \frac 1 \sigma}\eta\right) \hat \psi\left(2^{\frac 2 \sigma} t^{- \frac 1 \sigma}\eta\right).
	\]
Then it is computed directly that
	\begin{align*}
	&\mathcal F^{-1} \left(e^{-t/2} L_\sigma(t,\cdot) \hat \psi(2^{\frac 2 \sigma} \cdot) \right)(x)\\
	&= (2 \pi)^{-n} \int_{\mathbb R^n} e^{-t/2} L_\sigma(t,\xi) \hat \psi( 2^{\frac 2 \sigma} \xi)
	e^{i \xi \cdot x} d \xi \\
	&= (2 \pi)^{-n} t^{-n/\sigma}
	\int_{\mathbb R^n} e^{-t/2}
		L_\sigma(t,t^{-1/\sigma}\eta) \hat \psi( 2^{\frac 2 \sigma} t^{-1/\sigma}\eta)
		e^{i \eta \cdot (t^{-1/\sigma} x)} d \eta\\
	&= t^{-n/\sigma} \mathcal F^{-1} (\Lambda_\sigma(t)) \left(t^{-1/\sigma} x\right),
	\end{align*}
where we have used changing variables with $\xi := t^{-1/\sigma} \eta$.
Then as well as \eqref{eq:3.5},
by rewriting $t^{- \frac 1 \sigma} x$ as $y$,
we have
	\begin{align}
	& \left|t^{n/\sigma} \langle t^{-1/\sigma} x \rangle^{n+\sigma}
	\mathcal F^{-1} (e^{-t/2} L_\sigma(t,\cdot) \hat \psi(2^{\frac 2 \sigma} \cdot) )(t^{-1/\sigma} x)\right|
	\nonumber\\
	&\leq \langle y \rangle^{n+\sigma}
	| \mathcal F^{-1} (\Lambda_\sigma(t)) (y)|
	\nonumber\\
	&\leq
	| \mathcal F^{-1} (\Lambda_\sigma(t)) (y)|
	+ C \sum_{\ell=1}^n \langle y \rangle^{\sigma} | y_\ell^n \mathcal F^{-1} (\Lambda_\sigma(t)) (y)|
	\nonumber\\
	&\leq \| \Lambda_{\sigma}(t)\|_{L^1}
	+ C \sum_{\ell=1}^n \| \partial_\ell^n \Lambda_{\sigma}(t)\|_{B_{1,\infty}^\sigma}.
	\label{eq:3.7}
	\end{align}
We note that the estimates $\frac{|\eta|^\sigma} t \leq \frac 1 4$ and
	\[
	\frac{t}{2} - t \sqrt{ \frac 1 4 - \frac{|\eta|^\sigma} t }
	= \frac{|\eta|^\sigma}{ \frac 1 2 + \sqrt{ \frac1 4 - \frac{|\eta|^\sigma} t}}
	\geq |\eta|^\sigma
	\]
hold. Moreover, since $\mathrm{supp} \thinspace \psi \subset B(1)$,
there exists $\delta > 0$ such that
	\[
	\sqrt{ \frac1 4 - \frac{|\eta|^\sigma} t}
	> \delta.
	\]
The estimates above imply that
	\begin{align}
	\sup_{t \geq 1} \left|\partial_\ell^k \Lambda_\sigma(t,\eta)\right|
	&\leq C_k \sup_{t \geq 1} \left(|\eta|^{\sigma-k} + |\eta|^{k \sigma-k}\right)
	\exp\bigg(-\frac{t}{2} + t \sqrt{ \frac 1 4 - \frac{|\eta|^\sigma} t } \bigg)
	\nonumber\\
	&\leq C_k |\eta|^{\sigma-k} e^{-|\eta|^\sigma/2}
	\label{eq:3.8}
	\end{align}
for any $k \in \mathbb Z_{\geq 0}$ with a constant $C_k$ depending only on $k$.
Therefore, the first estimate of \eqref{eq:3.3} hold
because \eqref{eq:3.8} and Lemma \ref{Lemma:3.1} imply
that the RHS of the last estimate of \eqref{eq:3.7}
is uniformly bounded with respect to $t \geq 1$.
When $\sigma$ is even, \eqref{eq:3.4} follows similarly from the fact that
the estimate
	\[
	\sup_{0 < t < 1}
	\left\| \partial_\ell^k \Lambda_\sigma(t) \right\|_{L^1}
	< \infty
	\]
holds for any non-negative integer $k$, which completes the proof of the lemma.

\end{proof}

Now we give a proof of Theorem \ref{Theorem:2.1}.
\begin{proof}[Proof of Theorem \ref{Theorem:2.1}]
First we prove the estimate \eqref{eq:2.1}.
We note that for any $\sigma>0$, $s \geq 0$ and $q \geq 1$, there exists a positive constant $C=C(\sigma,s,q)>0$ such that the estimates
	\begin{align}
	e^{- \frac t 2} \| |\xi|^s L_\sigma(t,\xi) \|_{L^q\left(|\xi| < 2^{-\frac 2 \sigma}\right)}
	&\leq C
	\| |\xi|^s e^{-t |\xi|^\sigma} \|_{L^q\left(|\xi| < 2^{-\frac 2 \sigma}\right)}
	\nonumber\\
	&\leq C
	\begin{cases}
	\| |\xi|^s \|_{L^q\left(|\xi| < 2^{-\frac 2 \sigma}\right)}
	&\mathrm{if} \quad t \leq 1,\\
	t^{-\frac s \sigma - \frac n {\sigma q}}
	\| |\xi|^s e^{- |\xi|^\sigma} \|_{L^q}
	&\mathrm{if} \quad t \geq 1
	\end{cases}
	\label{eq:3.9}
	\end{align}
and
	\begin{align}
	\sup_{t > 0} \left\| \langle \xi \rangle^{\frac \sigma 2} L_\sigma(t,\xi)
	\right\|_{L^\infty\left(|\xi| > 2^{-\frac 2 \sigma}\right)}
	< \infty
	\label{eq:3.10}
	\end{align}
hold.
Then \eqref{eq:3.9} implies that for $j\in \mathbb{Z}$ with $j \leq - \frac 2 \sigma - 1$,
we have
	\begin{align}
	\| D^{s_1} \phi_j \ast \mathcal S_\sigma(t) f \|_{L^2}
	&= \left\| e^{-\frac t 2} L_\sigma(t) |\xi|^{s_1-s_2} |\xi|^{s_2} \hat \phi_j \hat f \right\|_{L^2}
	\nonumber\\
	&\leq \left\| e^{-\frac t 2} L_\sigma(t) |\xi|^{s_1-s_2}
	\right\|_{L^{\frac{2\gamma'}{\gamma'-2}}\left(|\xi| < 2^{-\frac 2 \sigma}\right)}
	\left\| |\xi|^{s_2} \hat \phi_j \hat f \right\|_{L^{\gamma'}}
	\nonumber\\
	&\leq C \langle t \rangle^{-\frac{1} \sigma ( s_1 - \frac n 2) + \frac{1} \sigma ( s_2 - \frac n \gamma)}
	\| D^{s_2} \phi_j \ast f \|_{L^{\gamma}}.
	\label{eq:3.11}
	\end{align}
Similarly, the following estimates hold:
	\begin{align}
	\left\| 2^{- \frac{2n}{\sigma}} D^{s_1} \psi\left(2^{- \frac 2 \sigma} \cdot \right) \ast \mathcal S_\sigma(t) f \right\|_{L^2}
	&= \left\| e^{-\frac t 2} L_\sigma(t) |\xi|^{s_1-s_2} |\xi|^{s_2} \hat \psi(2^{\frac 2 \sigma} \cdot ) \hat f \right\|_{L^2}
	\nonumber\\
	&\leq \| e^{-\frac t 2} L_\sigma(t) |\xi|^{s_1-s_2}
	\|_{L^2\left(|\xi| < 2^{-\frac 2 \sigma}\right)}
	\left\| |\xi|^{s_2} \hat {f} \right\|_{L^\infty}
	\nonumber\\
	&\leq C \langle t \rangle^{-\frac{1} \sigma ( s_1 - \frac n 2) + \frac{1} \sigma ( s_2 - n)}
	\| D^{s_2} f \|_{L^1}.
	\label{eq:3.12}
	\end{align}
Besides
\eqref{eq:3.10} implies that for $j\in \mathbb{Z}$ with $j \geq - \frac 2 \sigma - 1$,
	\begin{align}
	\| D^{s_1} \phi_j \ast \mathcal S_\sigma(t) f \|_{L^2}
	&= e^{-\frac t 2} \left\| \langle \xi \rangle^{\frac \sigma 2} L_\sigma(t)
	\langle \xi \rangle^{-\frac \sigma 2} |\xi|^{s_1} \hat \phi_j \hat f \right\|_{L^2}
	\nonumber\\
	&\leq C e^{-\frac t 2}
	\| (1-\Delta)^{- \frac \sigma 4} D^{s_1} \phi_j \ast f \|_{L^{2}}.
	\label{eq:3.13}
	\end{align}
Therefore
\eqref{eq:2.1} follows from \eqref{eq:3.11}, \eqref{eq:3.12}, and \eqref{eq:3.13}.
We next show the estimate \eqref{eq:2.3}.
We care only the singularity of $L_\sigma$ at the origin
because the other parts may be handled as well as the proof of \cite[Lemma A.2]{IIW}.
By Lemma \ref{Lemma:3.2},
if $\beta < \frac n{q'} + \sigma$,
then the estimate
	\[
	\left\| |\cdot|^\beta \mathcal F^{-1} (e^{-t/2} L_\sigma(t,\cdot) \hat \psi) \right\|_{L^q}
	\leq C t^{\frac{-n/q'+\beta}{\sigma}}
	\]
holds.
Since we shall choose $q=\frac{2 \gamma}{3 \gamma -2}$,
the estimate above implies \eqref{eq:2.3}.
\end{proof}
Next we give a proof of Theorem \ref{Theorem:2.3}.
\begin{proof}[Proof of Theorem \ref{Theorem:2.3}]
This theorem is proved in the almost similar manner as the proof of Theorem \ref{Theorem:2.1}. We only comment the different parts of the proof.
For the operator $\widetilde{\mathcal{S}_{\sigma}}(t)$,
we have
$\sup_{t > 0} \| \partial_t L_\sigma(t,\xi)
	\|_{L^\infty\left(|\xi| > 2^{-\frac 2 \sigma}\right)}
	< \infty$
instead of \eqref{eq:3.10}.
By using this estimate and the same argument as above, we can prove the conclusion of this Theorem.
\end{proof}
Now we give a proof of Corollary \ref{Corollary:2.4}.
\begin{proof}[Proof of Corollary \ref{Corollary:2.4}]
When $p = \infty$,
by Theorems \ref{Theorem:2.1} and \ref{Theorem:2.3} and the interpolation estimate,
we have
	\begin{align}
	\| \mathcal{S}_{\sigma}(t) g \|_{L^\infty}
	&\leq C \| \mathcal{S}_{\sigma}(t) g \|_{\dot H^s}^{\frac{n}{2s}}
	\| \mathcal{S}_{\sigma}(t) g \|_{L^2}^{\frac{2s-n}{2s}}
	\nonumber\\
	&\leq C t^{-\frac n \sigma}
	( \| g \|_{L^1} + \| g \|_{\dot H^{s-\frac \sigma 2}}),
	\label{eq:3.14}\\
	\left\| \widetilde{\mathcal{S}}_{\sigma}(t) f \right\|_{L^\infty}
	&\leq C t^{-\frac n \sigma}
	( \| f \|_{L^1} + \| f \|_{\dot H^s}).
	\label{eq:3.15}
	\end{align}
When $p =1$,
as claimed in the proof of Lemma 2.3 in \cite{IIW}, the estimate
	\[
	\| f \|_{L^1}
	\leq C \| f \|_{L^2}^{1-\frac{n}{2 \beta}} \| |\cdot|^\beta f \|_{L^2}^{\frac{n}{2 \beta}}
	\]
holds provided that the right hand side is finite. This and Theorems \ref{Theorem:2.1} and \ref{Theorem:2.3} imply that
the estimates
	\begin{align}
	\| \mathcal{S}_{\sigma}(t) g \|_{L^1}
	& \leq \| \mathcal{S}_{\sigma}(t) g \|_{L^2}
	+ \left\| | \cdot |^{\beta} \mathcal{S}_{\sigma}(t) g \right\|_{L^2}
	\nonumber\\
	& \leq \| g \|_{L^1} + \| \langle \cdot \rangle^{\beta} g \|_{L^2},
	\label{eq:3.16}\\
	\left\| \widetilde{\mathcal{S}}_{\sigma}(t) f \right\|_{L^1}
	& \leq \| f \|_{L^1} + \| \langle \cdot \rangle^{\beta} f \|_{L^2}.
	\label{eq:3.17}
	\end{align}
hold.
Since $\langle \cdot \rangle^{-\beta} L^2 \hookrightarrow L^1$,
Corollary \ref{Corollary:2.4} follows from
the interpolation between \eqref{eq:3.14} and \eqref{eq:3.16},
and the interpolation between \eqref{eq:3.15} and \eqref{eq:3.17}.
\end{proof}

\section{Fundamental estimates on Besov space}
In this section, we collect two fundamental estimates on Besov spaces.

First we recall the Gagliardo-Nirenberg inequality on Besov spaces.
\begin{Lemma}[{\cite[Theorem 2.1]{HMOW}}]
\label{Lemma:4.1}
Let $n\in \mathbb{N}$, $s_0,s_1,s_2 \in \mathbb R$, $0 < p_0, p_1, p_2, q_0, q_1, q_2 \leq \infty$,
and $0 \leq \theta \leq 1$.
Then there exists a positive constant $C$ such that the Gagliardo-Nirenberg inequality
	\[
	\| f \|_{\dot B_{p_0,q_0}^{s_0}}
	\leq C \| f \|_{\dot B_{p_1,q_1}^{s_1}}^\theta
	\| f \|_{\dot B_{p_2,q_2}^{s_2}}^{1-\theta}
	\]
holds for all $f \in \dot B_{p_1,q_1}^{s_1} \cap \dot B_{p_2,q_2}^{s_2}$
if and only if
	\begin{align*}
	s_0 - \frac n p_0
	&= \theta \bigg( s_1 - \frac n p_1 \bigg)
	+ (1-\theta) \bigg( s_2 - \frac n p_2 \bigg),
	\nonumber\\
	s_0
	&\leq \theta s_1 + (1-\theta) s_2,
	\end{align*}
and
	\begin{align*}
	\frac 1 q_0
	&\leq \frac \theta {q_1} + \frac {1-\theta}{q_2}
	&&\mathrm{if} \quad p_1 \neq p_2 \ \mathrm{and} \ s_0 = \theta s_1 + (1-\theta)s_2,\\
	s_1 &\neq s_2 \ \mathrm{or} \ \frac 1 {q_0} \leq \frac \theta {q_1} + \frac {1-\theta}{q_2}
	&&\mathrm{if} \quad p_1 = p_2 \ \mathrm{and} \ s_0 = \theta s_1 + (1-\theta)s_2,\\
	s_0 - \frac n {p_0} &\neq s_1 - \frac n {p_1} \ \mathrm{or}
	\ \frac 1 {q_0} \leq \frac \theta {q_1} + \frac {1-\theta}{q_2}
	&&\mathrm{if} \quad s_0 < \theta s_1 + (1-\theta)s_2.
	\end{align*}
\end{Lemma}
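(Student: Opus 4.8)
The plan is to reduce to a dyadic estimate through the Littlewood--Paley characterization of the homogeneous Besov norms and then to interpolate block by block. Writing $f_j:=\phi_j\ast f$, so that $\widehat{f_j}$ is supported in $\{|\xi|\sim 2^j\}$, the three norms read $\|f\|_{\dot B_{p_i,q_i}^{s_i}}=\big\|\,2^{s_i j}\|f_j\|_{L^{p_i}}\big\|_{\ell^{q_i}(\mathbb Z)}$ for $i=0,1,2$. Setting
\[
a_j:=2^{s_1 j}\|f_j\|_{L^{p_1}},\qquad b_j:=2^{s_2 j}\|f_j\|_{L^{p_2}},\qquad c_j:=2^{s_0 j}\|f_j\|_{L^{p_0}},
\]
the assertion becomes $\|c\|_{\ell^{q_0}}\le C\,\|a\|_{\ell^{q_1}}^{\theta}\,\|b\|_{\ell^{q_2}}^{1-\theta}$, while the necessity of the stated relations I would obtain from the scaling $f\mapsto f(\lambda\cdot)$ together with a single wave packet concentrated at one dyadic scale.

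In the generic range $p_0\ge\max(p_1,p_2)$ I would invoke Bernstein's inequality on each block, namely $\|f_j\|_{L^{p_0}}\le C\,2^{jn(1/p-1/p_0)}\|f_j\|_{L^{p}}$ for $p\le p_0$, with $p=p_1$ and $p=p_2$. Inserting the weights and using the homogeneity relation $s_0-\tfrac{n}{p_0}=\theta(s_1-\tfrac{n}{p_1})+(1-\theta)(s_2-\tfrac{n}{p_2})$ turns these into
\[
c_j\le C\,2^{-j(1-\theta)\delta}\,a_j,\qquad c_j\le C\,2^{\,j\theta\delta}\,b_j,\qquad \delta:=\Big(s_1-\tfrac{n}{p_1}\Big)-\Big(s_2-\tfrac{n}{p_2}\Big).
\]
In the scaling--critical case $s_0=\theta s_1+(1-\theta)s_2$ the weighted geometric mean of the two bounds cancels the powers of $2^j$ and yields the clean block inequality $c_j\le C\,a_j^{\theta}b_j^{1-\theta}$; summing in $j$ by Hölder on sequences under $\tfrac{1}{q_0}\le\tfrac{\theta}{q_1}+\tfrac{1-\theta}{q_2}$ (so that $\ell^{r}\hookrightarrow\ell^{q_0}$ for $\tfrac1r=\tfrac{\theta}{q_1}+\tfrac{1-\theta}{q_2}$) then gives the conclusion. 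When instead $s_0<\theta s_1+(1-\theta)s_2$, the geometric mean leaves a genuine decay factor $2^{-\kappa|j-j_\ast|}$ with $\kappa>0$ about the balancing index $j_\ast$ where $a_j$ and $b_j$ are comparable, and summing the resulting geometric series both closes the estimate and relaxes the constraint on $q_0$; this reproduces exactly the trichotomy recorded in the hypotheses.

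The step I expect to be the main obstacle is the complementary range in which $p_0<\max(p_1,p_2)$, so that Bernstein is available only in one direction and one of the two block bounds above is missing. The homogeneity still forces $p_0\ge\min(p_1,p_2)$, so a one-sided Bernstein bound from the smaller exponent combined with single-block $L^{p}$-interpolation continues to control $c_j$, but the clean geometric mean is no longer at hand and one must instead split the sum over $j$ at the balancing index $j_\ast$, estimate the two halves by the two different one-sided bounds, and sum the two geometric tails separately. It is precisely this splitting that produces the sharp summability exponents and the distinctions between strict and non-strict inequalities in $s_i$ and $q_i$ in the statement, and handling it uniformly across all endpoint configurations (including $p_i$ or $q_i$ equal to $\infty$) is the technical heart of the argument.
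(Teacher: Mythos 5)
First, a point of reference: the paper does not prove Lemma \ref{Lemma:4.1} at all --- it is quoted verbatim from \cite[Theorem 2.1]{HMOW}, so there is no in-paper argument to match, and your sketch has to stand against the cited proof, whose sufficiency part does follow your Littlewood--Paley outline in spirit. Within that outline, however, two steps fail as written. (i) In the scaling-critical case $s_0 = \theta s_1 + (1-\theta)s_2$ you derive the block bound $c_j \le C\,a_j^\theta b_j^{1-\theta}$ as a geometric mean of the two Bernstein bounds, but those bounds require $p_0 \ge \max(p_1,p_2)$, whereas the critical case together with the scaling balance forces $\tfrac{1}{p_0} = \tfrac{\theta}{p_1} + \tfrac{1-\theta}{p_2}$, so $p_0$ lies \emph{between} $p_1$ and $p_2$; for $0<\theta<1$ both requirements hold only when $p_0=p_1=p_2$, so your derivation is vacuous exactly where you invoke it. The correct tool there is H\"older's inequality applied block by block, $\| f_j \|_{L^{p_0}} \le \| f_j \|_{L^{p_1}}^{\theta} \| f_j \|_{L^{p_2}}^{1-\theta}$, which needs no Bernstein at all. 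Relatedly, your summation step does not account for the second alternative in the statement: when $p_1=p_2$ (hence $p_0=p_1$) and $s_1 \neq s_2$, the inequality holds with \emph{no} condition on $q_0,q_1,q_2$; the mechanism is the balancing-index splitting with $\ell^\infty$ bounds $c_j \le \min\bigl(2^{(s_0-s_1)j}\|a\|_{\ell^\infty},\, 2^{(s_0-s_2)j}\|b\|_{\ell^\infty}\bigr)$ and geometric summation --- you attach that mechanism only to the subcritical case. (ii) The range $p_0 < \max(p_1,p_2)$, which genuinely occurs when $s_0 < \theta s_1 + (1-\theta)s_2$ (e.g.\ $p_1=1$, $p_2=\infty$, $\theta=\tfrac12$, $p_0=3$), is the part that produces the trichotomy, and you yourself flag it as ``the main obstacle'' without carrying it out; as submitted, this half of the sufficiency proof is absent rather than sketched.

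The more serious gap is the necessity direction. Dilation covariance plus a single wave packet at one dyadic scale can only force the two conditions on the $s_i$ (the scaling balance and $s_0 \le \theta s_1 + (1-\theta)s_2$). It cannot detect the conditions on $q_0,q_1,q_2$, which are statements about summability across infinitely many scales, nor can it distinguish $p_1 \neq p_2$ from $p_1 = p_2$. For the $q$-conditions one must test with lacunary sums of packets at $N$ scales with freely chosen amplitudes, so that each Besov norm reduces to an $\ell^{q_i}$-norm of a weighted coefficient sequence, and let $N \to \infty$ to force the embedding $\tfrac{1}{q_0} \le \tfrac{\theta}{q_1} + \tfrac{1-\theta}{q_2}$; for the $p_1 \neq p_2$ refinement one needs single-scale blocks built from many translated bumps, whose $L^{p_1}$- and $L^{p_2}$-norms can be tuned independently precisely when $p_1 \neq p_2$. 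This is how the proof in \cite{HMOW} proceeds. In short: your skeleton is the standard and correct one, but the critical-case block estimate is obtained by an invalid route, the hard exponent range is not handled, and the ``only if'' direction is underpowered --- these are genuine gaps, not matters of polish.
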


Next,
we recall the fractional Leibniz rule on Besov spaces:
\begin{Lemma}
\label{Lemma:4.2}
Let $ 1 \leq  p_0,p_1,p_2,p_3,p_4 \leq \infty$ and $s \geq 0$.
For any $f \in \dot B_{p_1,2}^s \cap L^{p_2}$
and $g \in \dot B_{p_3,2}^s \cap L^{p_4}$,
the estimate
	\[
	\| fg \|_{\dot B_{p_0,2}^s}
	\leq C \| f \|_{\dot B_{p_1,2}^s} \| g \|_{L^{p_4}}
	+ C \| g \|_{\dot B_{p_3,2}^s} \| f \|_{L^{p_2}}.
	\]
holds if
	\[
	\frac 1 {p_0}
	= \frac 1 {p_1} + \frac 1 {p_4}
	= \frac 1 {p_2} + \frac 1 {p_3}.
	\]
\end{Lemma}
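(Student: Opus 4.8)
The plan is to prove the inequality by a Littlewood–Paley/paraproduct decomposition. First I would write $f = \sum_{k} f_k$ and $g = \sum_{\ell} g_\ell$ where $f_k := \phi_k \ast f$ and $g_\ell := \phi_\ell \ast g$ denote the dyadic pieces, and then split the product $fg$ into the three standard paraproduct regions: the low-high interaction $\sum_{k \ll \ell}$, the high-low interaction $\sum_{k \gg \ell}$, and the resonant (diagonal) interaction $\sum_{|k-\ell| \leq 2}$. By symmetry in $(f,g)$ the first two regions are handled the same way, so it suffices to estimate one paraproduct term plus the diagonal term. For each $j$, I would identify which pairs $(k,\ell)$ can contribute to $\phi_j \ast (f_k g_\ell)$ by examining the Fourier support: in the low-high region the frequency of the product is localized near $2^\ell$, so only $\ell$ with $|\ell - j| \leq C$ contribute; in the diagonal region one has $k,\ell \geq j - C$.

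Next I would estimate the $L^{p_0}$ norm of each localized piece. For the low-high paraproduct I would bound $\|\phi_j \ast (\sum_{k<\ell-2} f_k\, g_\ell)\|_{L^{p_0}}$ by using Hölder with the exponent relation $\tfrac1{p_0} = \tfrac1{p_2} + \tfrac1{p_3}$, controlling the partial sum $\sum_{k<\ell-2} f_k$ in $L^{p_2}$ (which is dominated by $\|f\|_{L^{p_2}}$ via the uniform boundedness of the low-frequency cutoffs) and $g_\ell$ in $L^{p_3}$. Multiplying by $2^{sj}$, taking $\ell^2$ in $j$, and absorbing the finitely-shifted sum over $\ell$ then yields the bound $C\|f\|_{L^{p_2}}\|g\|_{\dot B_{p_3,2}^s}$. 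By the $(f,g)$ symmetry and the second exponent relation $\tfrac1{p_0}=\tfrac1{p_1}+\tfrac1{p_4}$, the high-low region contributes $C\|g\|_{L^{p_4}}\|f\|_{\dot B_{p_1,2}^s}$. For the diagonal term I would use Hölder with either exponent splitting, write $2^{sj} \leq C\, 2^{-s(k-j)} 2^{sk}$ with $s\geq 0$ and $k \geq j - C$, and then apply Young's inequality for the convolution in the $\ell^2$ indices (the geometric weight $2^{-s(k-j)}$ being summable since $s\geq 0$, with the endpoint $s=0$ handled directly) to land again in $\dot B_{p_0,2}^s$.

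The main obstacle I anticipate is the bookkeeping at the endpoint $s = 0$ and, more delicately, making the summability arguments uniform across the full range $1 \leq p_i \leq \infty$ without assuming $p_0 \geq 1$ forces any strict inequality among the $q$-indices; here the hypothesis that the Besov integrability index is exactly $2$ on both factors and the target is convenient, since Cauchy–Schwarz/Young in the dyadic sums pairs the two $\ell^2$ summable sequences cleanly. I would also need to justify the convergence of the paraproduct series in $\mathcal{S}'/\mathcal{P}$ and that the frequency-support localizations are genuinely finite-band, which is the routine but essential step that lets the $\ell^2$ norms be recombined. Once these dyadic estimates are assembled, summing the three regional contributions gives exactly
\[
\| fg \|_{\dot B_{p_0,2}^s}
\leq C \| f \|_{\dot B_{p_1,2}^s} \| g \|_{L^{p_4}}
+ C \| g \|_{\dot B_{p_3,2}^s} \| f \|_{L^{p_2}},
\]
completing the proof.
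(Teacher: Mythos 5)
Your proposal is correct and follows essentially the same route as the paper's Appendix~\ref{section:B} proof: a Bony-type paraproduct decomposition into the two off-diagonal interactions (estimates \eqref{eq:B.1} via H\"older and Young, using the uniform $L^1$ bound on $\sum_{k\le j-2}\phi_k$) and the resonant part (estimate \eqref{eq:B.2} via the geometric factor $2^{-sk}$ summed against the $\ell^2$ Besov sequence), with the endpoint $s=0$ treated separately (the paper does this via the difference-quotient norm \eqref{eq:1.15}, which matches your ``handled directly'' caveat, since the weight $2^{-sk}$ is summable only for $s>0$). No substantive gap; the argument is the paper's.
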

For the proof of this lemma, see Appendix \ref{section:B}.

\section{Estimates for the Duhamel term}
\subsection{Definitions of the solution space and its auxiliary space}
In this subsection, we introduce our solution space and its auxiliary space for the construction of solutions to the nonlinear problem \eqref{eq:1.10}.
Let $\sigma>0$ and $r \in [1,2]$. We introduce a dilation operator $\mathcal D_{\sigma,r}(t)$ given by
	\[
	\mathcal D_{\sigma,r} (t) u(t,x)
	:= \langle t \rangle^{ \frac n{r\sigma}} u \left(t ,\langle t \rangle^{\frac 1 \sigma} x\right).
	\]
Let 
\begin{equation}
\label{eq:5.1}
    q_\sigma := \max\left(1, \frac{2 n}{n+\sigma}\right) = \frac{n + \max(n,\sigma)}{n+\sigma},
\end{equation}
$s\ge 0$, $\alpha > 0$ and $\gamma\in [1,\infty]$. Then for $T\in (0,\infty]$, we define the norms
	\begin{align*}
	\| f \|_{X_0^{s,\alpha}}
	:=& \| f \|_{L^2} + \| f \|_{\dot H^s} + \| | \cdot |^\alpha f \|_{L^2},\\
	\| f \|_{Y_0^{s,\alpha,\gamma}}
	:=&
	\| | \cdot |^\alpha f \|_{L^{q_\sigma}} + \| f \|_{L^{\gamma}}
		+ \begin{cases}
		\| f \|_{\dot B_{q_\sigma,2}^s}
		&\mathrm{if} \quad s > 0,\\
		\| f \|_{L^{q_\sigma}}
		&\mathrm{if} \quad s = 0,\\
		\end{cases}\\
	\| u \|_{X^{s,\alpha,r}(T)}
	:=& \sup_{0 < t < T} \left\| \mathcal D_{\sigma,r}(t) u(t) \right\|_{X_0^{s,\alpha}},\\
	\| v \|_{Y^{s,\alpha,\gamma,r}(T)}
	:=&
	\sup_{0 < t < T} \langle t \rangle^{\frac {n \rho}{\sigma r}}
	\left\| \langle t \rangle^{- \frac {n}{\sigma r}} \mathcal D_{\sigma,r}(t) v(t) \right\|_{Y_0^{s,\alpha,\gamma}}.
	\end{align*}
We remark that $Y_0^{s,\alpha,\gamma}$ consists of $L^{q_\sigma}$
so as to apply the embedding
	\begin{align}
	L^{q_\sigma} \hookrightarrow H^{-\frac \sigma 2}.
	\label{eq:5.2}
	\end{align}
 Here $H^{-\frac{\sigma}{2}}$ appears in the second term in the right hand side of (\ref{eq:2.1}), for more details, see Theorem \ref{Theorem:2.1} and Lemma \ref{Lemma:5.1}. We also remark that
the identity
	\[
	\| \mathcal N(u) \|_{Y^{s,\alpha,\gamma,r}(T)}
	= \sup_{0 < t < T} \langle t \rangle^{\frac {n\rho}{\sigma r}}
	\| \mathcal N( \langle t \rangle^{- \frac {n}{\sigma r}} \mathcal D_{\sigma,r} (t) u)
	\|_{Y_0^{s,\alpha,\gamma}}
	\]
holds,
so if the identity $|\mathcal N(C z)| = C^{\rho} |\mathcal N(z)|$ holds for any $z\in \mathbb{R}$, then we have
	\[
	\| \mathcal N(u) \|_{Y^{s,\alpha,\gamma,r}(T)}
	= \sup_{0 < t < T} \| \mathcal N( \mathcal D_{\sigma,r} (t) u) \|_{Y_0^{s,\alpha,\gamma}}.
	\]

As we see Corollary \ref{Corollary:6.6},
thanks to the dilation operator $\mathcal D_{\sigma,r}(t)$,
it is sufficient to control the $Y_0^{s,\alpha,\gamma}$ norm of the nonlinearity $\mathcal N^{(\overline s)}$
by using $X_0^{s,\alpha}$ norm of solutions for some fixed $t$
in order to control the $Y^{s,\alpha,\gamma,r}(T)$ norm of the nonlinearity
by using $X^{s,\alpha,\gamma}(T)$ of solutions for any $0 < t < T$.

\subsection{Estimates for the Duhamel term}

Next, we show the following estimates for the Duhamel term
by applying Theorem \ref{Theorem:2.1}.
\begin{Lemma}[Estimate of the Duhamel term]
\label{Lemma:5.1}
Let $n\in \mathbb{N}$, $\sigma>0$, $\alpha>0$ and $r\in [1,2]$. We assume that the estimates
	\begin{align}
	\frac \alpha n + \frac 1 2
	> \frac 1 r
	> \begin{cases}
	0
	&\mathrm{if} \quad \sigma = 2 \mathbb N,\\
	\frac{2 \alpha + n - 4 \sigma}{2n}
	&\mathrm{otherwise}
	\end{cases}
	\label{eq:5.3}
	\end{align}
hold. Let $\gamma\in \left[1,\min(r,q_\sigma)\right]$, where $q_{\sigma}$ is given by (\ref{eq:5.1}), satisfy the estimates
	\begin{align}
	\frac{rn}{n+r\sigma}
	\leq \gamma
	<
	\begin{cases}
	\infty
	&\mathrm{if} \quad \sigma = 2 \mathbb N,\\
	\frac{2n}{( 2\alpha + n - 2\sigma)_+}
	&\mathrm{otherwise}.
	\end{cases}
	\label{eq:5.4}
	\end{align}
Then there exists a positive constant $C>0$ such that for $T \in (0,1)$, the estimate
	\begin{align}
	\bigg\| \int_0^t \mathcal S_\sigma(t-\tau) v(\tau) d \tau \bigg\|_{X^{s,\alpha,r}(T)}
	&\leq C \int_0^T \mathcal \| v \|_{Y^{s,\alpha,\gamma,r}(\tau)} d \tau
	\label{eq:5.5}
	\end{align}
holds.
For any $T > 0$, the following estimate also holds:
	\begin{align}
	&\bigg\| \int_0^t \mathcal S_\sigma(t-\tau) v(\tau) d \tau \bigg\|_{X^{s,\alpha,r}(T)}
	\nonumber\\
	&\leq C \| v \|_{Y^{s,\alpha,r,\gamma}(T)}
	\begin{cases}
	1 + \log\langle T \rangle
	& \mathrm{if} \quad (\gamma,\rho) = (r, 1+\frac{ r \sigma}{n}),\\
	\langle T \rangle^{\left(1- \frac {n(\rho-1)}{r\sigma}\right)_+}
	& \mathrm{otherwise}.
	\end{cases}
	\label{eq:5.6}
	\end{align}
\end{Lemma}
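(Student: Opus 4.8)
The plan is to split the vector estimate into the three scalar seminorms that make up $X_0^{s,\alpha}$ and to treat each by inserting Theorem \ref{Theorem:2.1} under the Duhamel integral. Writing $w(t):=\int_0^t\mathcal S_\sigma(t-\tau)v(\tau)\,d\tau$, a change of variables shows that the dilation acts diagonally,
\[
\|\mathcal D_{\sigma,r}(t)w(t)\|_{X_0^{s,\alpha}}
=\langle t\rangle^{c_1}\|w(t)\|_{L^2}
+\langle t\rangle^{c_2}\|w(t)\|_{\dot H^s}
+\langle t\rangle^{c_3}\big\||\cdot|^\alpha w(t)\big\|_{L^2},
\]
with $c_1=\tfrac n\sigma(\tfrac1r-\tfrac12)$, $c_2=\tfrac1\sigma(\tfrac nr+s-\tfrac n2)$ and $c_3=\tfrac1\sigma(\tfrac nr-\alpha-\tfrac n2)$. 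Applying the same scaling to $v$, one reads off that $\|v\|_{Y^{s,\alpha,\gamma,r}(\tau)}$ controls precisely the inputs $\|v(\tau)\|_{L^\gamma}$, $\|v(\tau)\|_{\dot B^s_{q_\sigma,2}}$ (or $\|v(\tau)\|_{L^{q_\sigma}}$ if $s=0$) and $\big\||\cdot|^\alpha v(\tau)\big\|_{L^{q_\sigma}}$, each carrying the temporal weight $\langle\tau\rangle^{\,n(\rho-1)/(\sigma r)}$ coming from the definition of the $Y$-norm. Thus it suffices to bound, uniformly in $t\in(0,T)$, three quantities of scalar convolution type times $\|v\|_{Y^{s,\alpha,\gamma,r}(T)}$.

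After Minkowski's inequality I would apply estimate \eqref{eq:2.1} to the $L^2$- and $\dot H^s$-components and estimate \eqref{eq:2.3} (with $\beta=\alpha$, $\nu=q_\sigma$) to the weighted component, using $L^\gamma\hookrightarrow\dot B^0_{\gamma,2}$ for $\gamma\in[1,2]$. Each application produces a temporal factor $\langle t-\tau\rangle^{-a}$ and the organising fact of the whole computation is the scaling identity
\[
c_i+1-a-b_i=1-\frac{n(\rho-1)}{r\sigma}=:\kappa ,
\]
valid for every component and every admissible choice of input, since $X_0^{s,\alpha}$, $Y_0^{s,\alpha,\gamma}$ and $\mathcal D_{\sigma,r}$ are built to be scale-compatible. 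Each component therefore collapses to a single scalar time-convolution $\langle t\rangle^{c_i}\int_0^t\langle t-\tau\rangle^{-a}\langle\tau\rangle^{-b_i}\,d\tau$ whose value is dictated by $\kappa$.

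The heart of the argument is the evaluation of this convolution, which I would carry out by splitting at $\tau=t/2$ and, crucially, by choosing a \emph{different} input on each piece. On the long-time piece $[0,t/2]$ I use the low-regularity input (i.e. $s_2=0$ in \eqref{eq:2.1}, the $L^\gamma$-term in \eqref{eq:2.3}): here the smoothing of the semigroup supplies the large decay $\tfrac{s}{\sigma}+\tfrac n\sigma(\tfrac1\gamma-\tfrac12)$ that absorbs the $s$- and $\alpha$-growth of $c_i$, and the remaining factor $\langle\tau\rangle^{-b_i}$ (for instance $b_1=\tfrac{n\rho}{\sigma r}-\tfrac n{\sigma\gamma}$) governs the growth in $T$. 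Since $\gamma\le r$ the long-time exponent dominates $c_i$, and together with the identity above this yields $\langle t\rangle^{\kappa}$ when $b_1\ne1$, and the borderline $1+\log\langle t\rangle$ exactly when $b_1=1$ and $c_1=a$, that is when $(\gamma,\rho)=(r,1+\tfrac{r\sigma}{n})$. On the short-time piece $[t/2,t]$ I instead use the matched-regularity input (i.e. $s_2=s_1$, and the $L^{q_\sigma}$/weighted-$L^{q_\sigma}$ terms): the $s$-dependence then cancels and the temporal exponent becomes $a=\tfrac{\min(n,\sigma)}{2\sigma}<1$, so the kernel $\langle t-\tau\rangle^{-a}$ is integrable near $\tau=t$ and this piece also contributes $\langle t\rangle^{\kappa}$. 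Here the ingredient $\|v\|_{\dot B^s_{q_\sigma,2}}$ (respectively $\big\||\cdot|^\alpha v\big\|_{L^{q_\sigma}}$) of $Y_0$ is used, and for the $L^2$-component with $s>0$ one recovers $\|v\|_{L^{q_\sigma}}$ from $Y_0$ by the weighted Gagliardo--Nirenberg inequality following from Lemma \ref{Lemma:4.1}. The exponentially decaying remainders of Theorem \ref{Theorem:2.1} are convolved against $e^{-(t-\tau)/4}$ and, being controlled by the $\dot B^s_{q_\sigma,2}$- and $L^{q_\sigma}$-parts of $Y_0$ via \eqref{eq:5.2}, contribute only lower-order terms. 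Summing the two pieces over all three components gives \eqref{eq:5.6}; and for \eqref{eq:5.5} one simply notes that when $T\in(0,1)$ all the weights $\langle t\rangle^{c_i}$, $\langle t-\tau\rangle^{-a}$ and $\langle\tau\rangle^{-b_i}$ are comparable to $1$, so Minkowski's inequality directly produces the bound by $\int_0^T\|v\|_{Y^{s,\alpha,\gamma,r}(\tau)}\,d\tau$.

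I expect the main obstacle to be precisely this two-sided choice of input and the verification that the hypotheses \eqref{eq:5.3}--\eqref{eq:5.4} are exactly what makes it work. The upper bound in \eqref{eq:5.4} is equivalent to condition \eqref{eq:2.2} with $\beta=\alpha$, which both renders \eqref{eq:2.3} applicable and keeps the growth exponent $d=\tfrac1\sigma(\alpha+\tfrac n2-\tfrac n\gamma)$ of its unweighted term below $1$; the upper bound in \eqref{eq:5.3} forces $c_3<0$ so that the weighted component loses nothing, and the remaining inequalities in \eqref{eq:5.3}--\eqref{eq:5.4} place $a$ and the $b_i$ in the ranges demanded by the scalar time-convolution lemma. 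Matching the low-regularity (long-time) and matched-regularity (short-time) inputs across $\tau=t/2$---rather than using one input throughout---is exactly what prevents the spurious growth $\langle t\rangle^{a-1+\kappa}$ that a naive single-input estimate would create as soon as $a\ge1$, and it is the point where the two distinct ingredients $\|\cdot\|_{L^\gamma}$ and $\|\cdot\|_{\dot B^s_{q_\sigma,2}}$ (or $\big\||\cdot|^\alpha\cdot\big\|_{L^{q_\sigma}}$) in the definition of $Y_0^{s,\alpha,\gamma}$ are genuinely needed.
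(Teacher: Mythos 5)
Your proposal is correct and follows essentially the same route as the paper's proof: the same splitting at $\tau=t/2$ with low-regularity inputs ($s_2=0$, $L^\gamma$) on $[0,t/2]$ and $q_\sigma$-based inputs on $[t/2,t]$, the interpolation \eqref{eq:5.7} recovering $\|v\|_{L^{q_\sigma}}$ from the $L^\gamma$- and $\dot B_{q_\sigma,2}^s$-parts of $Y_0^{s,\alpha,\gamma}$, the embedding \eqref{eq:5.2} for the exponentially decaying remainders, and the elementary convolution bound \eqref{eq:5.13} producing the exponent $\left(1-\frac{n(\rho-1)}{r\sigma}\right)_+$ with the logarithmic borderline exactly when $(\gamma,\rho)=(r,1+\frac{r\sigma}{n})$. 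Your identification of the role of the hypotheses (the upper bound in \eqref{eq:5.4} being condition \eqref{eq:2.2} with $\beta=\alpha$, the first inequality in \eqref{eq:5.3} making the weighted component harmless, and $\gamma\leq r$ controlling the long-time piece) also matches the paper's verification.
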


\begin{proof}
We note that the second estimate of \eqref{eq:5.3} guarantees
the existence of $\gamma$ satisfying \eqref{eq:5.4}.
For simplicity,
we write $\mu(q_1,q_2) := \frac n \sigma( \frac 1 {q_2} - \frac 1 {q_1})$
for any $1 \leq q_1, q_2 \leq \infty$.
Then $X^{s,\alpha,r}(T)$ and $Y^{s,\alpha,\gamma,r}(T)$ norms are rewritten by
\begin{align*}
	\| u \|_{X^{s,\alpha,r}(T)}
	=& \sup_{0<t<T} \langle t \rangle^{\mu(2,r)}
	\Big\lbrack
	\| u(t) \|_{L^2}
	+ \langle t \rangle^{\frac s \sigma} \| u(t) \|_{\dot H^s}
	+ \langle t \rangle^{- \frac \alpha \sigma}
		\| |\cdot|^\alpha u(t) \|_{L^2}
	\Big\rbrack,\\
	\| v \|_{Y^{s,\alpha,\gamma,r}(T)}
	=&
		\sup_{0<t<T}
		\left( \langle t \rangle^{\mu(q_\sigma, \frac r p) - \frac \alpha \sigma } \| |\cdot|^\alpha v(t) \|_{L^{q_\sigma}}
		+ \langle t \rangle^{\mu(\gamma,\frac r p)} \| v(t) \|_{L^{\gamma}}\right)\\
		&+\begin{cases}
		\displaystyle
		\sup_{0<t<T}
		\langle t \rangle^{\mu(q_\sigma, \frac r \rho) + \frac s \sigma} \| v(t) \|_{\dot B_{q_\sigma,2}^s}
		&\mathrm{if} \quad s > 0,\\
		\displaystyle
		\sup_{0<t<T}
		\langle t \rangle^{\mu(q_\sigma, \frac r \rho)} \| v(t) \|_{L^{q_\sigma}}
		&\mathrm{if} \quad s = 0.
	\end{cases}
	\end{align*}
Here we recall the estimates
	\[
	q_\sigma := \frac{n+\max(n,\sigma)}{n+\sigma}
	\geq \frac{2n}{n+\sigma}.
	\]
We note that since $\gamma \leq q_\sigma$,
for any $s > 0$, there exists a positive constant $C=C(s)>0$ such that
the estimate
	\begin{align}
	\| f \|_{L^{q_\sigma}}
	\leq C \| f \|_{L^{\gamma}}^{\theta} \| f \|_{\dot B_{q_\sigma,2}^s}^{1-\theta}
	\label{eq:5.7}
	\end{align}
holds for any $f\in \dot B_{q_\sigma,2}^s(\mathbb{R}^n)\cap L^{\gamma}(\mathbb{R}^n)$ for some $\theta\in [0,1]$. This follows from Lemma \ref{Lemma:4.1}
with
$\left(\dot B_{p_0,q_0}^{s_0}, \dot B_{p_1,q_1}^{s_1}, \dot B_{p_2,q_2}^{s_2}\right)
=\left(\dot B_{q_\sigma,1}^0, \dot B_{q_\sigma,2}^s, \dot B_{\gamma,\infty}^0\right)$
and $\theta \in [0,1] $ satisfying
	\[
	- \frac{n}{q_\sigma}
	= \theta \left(- \frac{n}{\gamma}\right) + (1-\theta) \left( s - \frac{n}{q_\sigma}\right).
	\]
Here we have used the fact that the relations $\dot B_{q_\sigma,1}^0 \hookrightarrow L^{q_\sigma}$
and $L^{\gamma} \hookrightarrow \dot B_{\gamma,\infty}^0$
hold (See \cite[6.3.1. Theorem]{INT}, for example).

Now we divide the integral region $[0,t]$ into the two parts $[0,t/2]$ and $[t/2,t]$ and we write
	\[
	I_1 := \int_{t/2}^t \mathcal S_\sigma (t-\tau) v(\tau) d \tau
	\quad \mathrm{and} \quad
	I_2 := \int_{0}^{t/2} \mathcal S_\sigma (t-\tau) v(\tau) d \tau.
	\]

We estimate $I_1$.
Theorem \ref{Theorem:2.1} with $\gamma=\nu=q_\sigma$, \eqref{eq:5.2}, and \eqref{eq:5.7}
imply that the following estimates
	\begin{align}
	\| \mathcal S_\sigma(t-\tau) v(\tau) \|_{L^2}
	&\leq C \langle t-\tau \rangle^{-\mu(2,q_\sigma)} \| v(\tau) \|_{L^{q_\sigma}}
	\nonumber\\
	&+ e^{-\frac{t-\tau}{4}}
	\| \mathcal F^{-1} ( ( 1 - \hat \psi(2^{- [ \frac 2 \sigma + 1]} \cdot )) \hat v(\tau)) \|_{H^{- \frac \sigma 2}}
	\nonumber\\
	&\leq C \| v \|_{Y^{s,\alpha,\gamma,r}(\tau)}
		\langle t-\tau \rangle^{-\mu(2,q_\sigma)}
		\langle \tau \rangle^{-\mu(q_\sigma,\frac r p)}.
	\label{eq:5.8}
	\end{align}
for any $\tau\in [0,t]$. Indeed,
\eqref{eq:5.8} follows from the estimate
	\[
	\| v(\tau) \|_{L^{\gamma}}^{\theta} \| v(\tau) \|_{\dot B_{q_\sigma,2}^s}^{1-\theta}
	\leq C \langle \tau \rangle^{-\mu(q_\sigma,\frac r p)} \| v \|_{Y^{s,\alpha,\gamma,r}(\tau)}
	\]
for $s > 0$.
Theorem \ref{Theorem:2.1} also implies that
for any $s>0$ and $\alpha > 0$, the estimates
	\begin{align*}
	\| D^s \mathcal S_\sigma(t-\tau) v(\tau) \|_{L^2}
	&\leq C \langle t-\tau \rangle^{-\mu(2,q_\sigma)}
	\| v(\tau) \|_{\dot B_{q_\sigma,2}^s}\\
	&\leq C \| v \|_{Y^{s,\alpha,\gamma,r}(\tau)}
		\langle t-\tau \rangle^{-\mu(2,q_\sigma)}
		\langle \tau \rangle^{-\mu(q_\sigma,\frac r \rho) - \frac s \sigma},\\
	\| |\cdot|^\alpha \mathcal S_\sigma(t-\tau) v(\tau) \|_{L^2}
	&\leq C \langle t-\tau \rangle^{\frac \alpha \sigma -\mu(2,\gamma)} \| v(\tau) \|_{L^{\gamma}}\\
	&+ C \langle t-\tau \rangle^{-\mu(2,q_\sigma)} \| | \cdot |^\alpha v(\tau) \|_{L^{q_\sigma}}
	+ C e^{-\frac{t-\tau}{4}}\| v(\tau) \|_{L^{q_\sigma}}\\
	&\leq C \| v \|_{Y^{s,\alpha,\gamma,r}(\tau)}
	\langle t-\tau \rangle^{\frac \alpha \sigma -\mu(2,\gamma)}
		\langle \tau \rangle^{-\mu(\gamma,\frac r \rho)}\\
	&+ C \| v \|_{Y^{s,\alpha,\gamma,r}(\tau)}
		\langle t-\tau \rangle^{-\mu(2,q_\sigma)}
		\langle \tau \rangle^{-\mu(q_\sigma,\frac r \rho) + \frac \alpha \sigma}.
	\end{align*}
hold for any $\tau\in [0,t]$. We note that the second estimate of \eqref{eq:5.4} implies that the condition \eqref{eq:2.2} with $\beta=\alpha$ is satisfied.
Therefore, for any $0 < t < 1$, the estimate
	\begin{align}
	&\langle t \rangle^{\mu(2,r)}
	( \| I_1 \|_{L^2}
	+ \langle t \rangle^{\frac s \sigma} \| D^s I_1 \|_{L^2}
	+ \langle t \rangle^{-\frac \alpha \sigma} \| |\cdot|^\alpha I_1 \|_{L^2})
	\nonumber\\
	&\leq C \int_0^t \| v\|_{Y^{s,\alpha,\gamma,r}(\tau)} d \tau
	\label{eq:5.9}
	\end{align}
holds. Moreover, for any $t \in [0,T]$, the estimates
	\begin{align}
	&\langle t \rangle^{\mu(2,r)}
	( \| I_1 \|_{L^2}
	+ \langle t \rangle^{\frac s \sigma} \| D^s I_1 \|_{L^2}
	+ \langle t \rangle^{-\frac \alpha \sigma} \| |\cdot|^\alpha I_1 \|_{L^2})
	\nonumber\\
	&\leq C \| v \|_{Y^{s,\alpha,\gamma,r}(T)}
		\langle t \rangle^{\mu(2,r)-\mu(q_\sigma,\frac r p)}
		\int_{\frac t 2}^t \langle t - \tau \rangle^{- \mu(2,q_\sigma)} d \tau
	\nonumber\\
	&+ C \| v \|_{Y^{s,\alpha,\gamma,r}(T)}
		\langle t \rangle^{\mu(2,r)-\mu(\gamma,\frac r \rho) - \frac \alpha \sigma}
		\int_{\frac t 2}^t \langle t - \tau \rangle^{\frac \alpha \sigma - \mu(2,\gamma)} d \tau
	\nonumber\\
	&\leq C \| v \|_{Y^{s,\alpha,\gamma,r}(T)}
		\langle t \rangle^{1-\frac{n(\rho-1)}{\sigma r}},
	\label{eq:5.10}
	\end{align}
hold, where we have used the identities
	\begin{align*}
	\mu(2,r) - \mu\left(q_\sigma, \frac r \rho\right) - \mu(2,q_\sigma)
	&= - \mu(r,2) - \mu\left(2, \frac r \rho\right)
	= - \mu\left(r,\frac r \rho\right)
	= - \frac{n(\rho-1)}{\sigma r},\\
	\mu(2,r) - \mu\left(\gamma, \frac r \rho\right) - \mu(2,\gamma)
	&= - \frac{n(p-1)}{\sigma r},
	\end{align*}
and the estimates
	\begin{align*}
	\mu(2, q_\sigma)
	&\leq \mu\left(2, \frac{2n}{n+\sigma}\right)
	= \frac n \sigma \bigg( \frac{n+\sigma}{2n} - \frac 1 2 \bigg)
	= \frac 1 2,\\
	\frac \alpha \sigma - \mu(2,\gamma)
	&> \frac n {\sigma r}  - \frac n {2 \sigma} + \frac n {2 \sigma} - \frac n {\sigma \gamma}
	\geq \frac n {\sigma } \left( \frac 1 r - \frac 1 \gamma\right)
	\geq -1,
	\end{align*}
which follows from the inequality $q_\sigma > \frac{2n}{n+\sigma}$ and the first estimates of \eqref{eq:5.3} and \eqref{eq:5.4}.

Next, we estimate $I_2$.
Theorem \ref{Theorem:2.1}, \eqref{eq:5.2}, \eqref{eq:5.7}, and $\gamma \leq q_\sigma$
imply that there exists a positive constant $C>0$ such that the estimates
	\begin{align*}
	\| \mathcal S_\sigma(t-\tau) v(\tau) \|_{L^2}
		&\leq C ( \langle t-\tau \rangle^{-\mu(2,\gamma)} \| v(\tau) \|_{L^{\gamma}}
		+ e^{-\frac{t-\tau}{4}} \| v(\tau) \|_{L^{q_\sigma}})\\
	&\leq C \| v \|_{Y^{s,\alpha,\gamma,r}(T)}
		(\langle t-\tau \rangle^{-\mu(2,\gamma)}
		\langle \tau \rangle^{-\mu(\gamma,\frac r \rho)}
		+ e^{-\frac{t-\tau}{4}} \langle \tau \rangle^{-\mu(q_\sigma,\frac r \rho)})\\
	&\leq C \| v \|_{Y^{s,\alpha,\gamma,r}(T)}
		\langle t-\tau \rangle^{-\mu(2,\gamma)}
		\langle \tau \rangle^{-\mu(\gamma,\frac r \rho)}
	\end{align*}
hold for any $\tau\in [0,t]$. Similarly, for any $s > 0$, there exists a positive constant $C>0$ such that the inequalities
	\begin{align*}
	&\| D^s \mathcal S_\sigma(t-\tau) v(\tau) \|_{L^2}\\
	&\leq C (
		\langle t-\tau \rangle^{-\mu(2,\gamma) - \frac s \sigma} \|v(\tau) \|_{L^{\gamma}}
		+ e^{-\frac{t-\tau}{4}}
		\| D^s \mathcal F^{-1} ( 1 - \hat \psi(2^{- [ \frac 2 \sigma + 1]} \cdot ) \hat v(\tau)) \|_{H^{- \frac \sigma 2}}
		)\\
	&\leq C ( \langle t-\tau \rangle^{-\mu(2,\gamma) - \frac s \sigma}
		\|v(\tau) \|_{L^{\gamma}}
		+ e^{-\frac{t-\tau}{4}}
		\| v(\tau) \|_{\dot B_{q_\sigma 2}^s})\\
	&\leq C \| v \|_{Y^{s,\alpha,\gamma,r}(\tau)}
		\langle t-\tau \rangle^{-\mu(2,\gamma) - \frac s \sigma}
		\langle \tau \rangle^{-\mu(\gamma,\frac r \rho)}
	\end{align*}
hold, and for any $\alpha > 0$, the estimates
	\begin{align*}
	&\| |\cdot|^\alpha \mathcal S_\sigma(t-\tau) v(\tau) \|_{L^2}\\
	&\leq C (
		\langle t-\tau \rangle^{-\mu(2,\gamma) + \frac \alpha \sigma} \| v(\tau) \|_{L^{\gamma}}
		+ \langle t-\tau \rangle^{-\mu(2,q_\sigma)} \| |\cdot|^\alpha v(\tau) \|_{L^{q_\sigma}}
		+ e^{-\frac{t-\tau}{4}} \| v(\tau) \|_{L^{q_\sigma}}
	)\\
	&\leq C \| v \|_{Y^{s,\alpha,\gamma,r}(\tau)}
		\left(\langle t-\tau \rangle^{-\mu(2,\gamma) + \frac \alpha \sigma}
		\langle \tau \rangle^{-\mu(\gamma,\frac r p)}
		+ \langle t-\tau \rangle^{-\mu(2,q_\sigma)}
		\langle \tau \rangle^{-\mu(q_\sigma,\frac r \rho) + \frac \alpha \sigma}\right)
	\end{align*}
hold. Therefore,
for any $0 < t < 1$, the estimate
	\begin{align}
	&\langle t \rangle^{\mu(2,r)}
	( \| I_2 \|_{L^2}
	+ \langle t \rangle^{\frac s \sigma} \| D^s I_2 \|_{L^2}
	+ \langle t \rangle^{-\frac \alpha \sigma} \| |\cdot|^\alpha I_2 \|_{L^2})
	\nonumber\\
	&\leq C \int_0^t \| v \|_{Y^{s,\alpha,\gamma,r}(\tau)} d \tau
	\label{eq:5.11}
	\end{align}
holds.
Besides for any $ t \in [0,T]$,
we estimate $I_2$. By the inequality $r \geq \gamma$, the estimates
	\begin{align}
	&\langle t \rangle^{\mu(2,r)}
	( \| I_2 \|_{L^2}
	+ \langle t \rangle^{\frac s \sigma} \| D^s I_2 \|_{L^2}
	+ \langle t \rangle^{-\frac \alpha \sigma} \| |\cdot|^\alpha I_2 \|_{L^2})
	\nonumber\\
	&\leq C \| v \|_{Y^{s,\alpha,\gamma,r}(T)}
	\nonumber\\
	&\cdot \bigg(
			\langle t \rangle^{- \mu(r,\gamma)}
				\int_0^{\frac t 2} \langle \tau \rangle^{- \mu(\gamma, \frac r \rho)} d \tau
			+ \langle t \rangle^{- \mu(r,q_\sigma) - \frac \alpha \sigma}
				\int_0^{\frac t 2} \langle \tau \rangle^{- \mu(q_\sigma, \frac r \rho) + \frac \alpha \sigma} d \tau
		\bigg)
	\label{eq:5.12}
	\end{align}
hold. Here we note that the estimate $\mu(r,\gamma) > 0$ holds if $r > \gamma$
and the first estimate of \eqref{eq:5.3} implies that the inequalities
	\[
	\mu(r,q_\sigma) + \frac \alpha \sigma
	= \frac n \sigma \left( \frac{1}{q_\sigma} - \frac 1 r + \frac \alpha n\right)
	> 0
	\]
hold. We claim that for any $a \in \mathbb R$ and $d > 0$, there exists a positive constant $C=C(a,d)>0$ such that the estimate
	\begin{align}
	\langle t \rangle^{-d} \int_0^t \langle \tau \rangle^a d \tau
	\leq C \langle t \rangle^{(1+a-d)_+}
	\label{eq:5.13}
	\end{align}
holds.
When $a = -1$, \eqref{eq:5.13} holds obviously.
When $a \neq -1$, \eqref{eq:5.13} follows from the fact that
	\[
	(1+a)_+ - d \leq (1+a-d)_+.
	\]
Since the equalities
	\begin{align*}
	1 - \mu(r,\gamma) - \mu\left(\gamma,\frac r \rho\right)
	&= 1 - \mu\left(r, \frac r \rho\right) = 1 - \frac{n(\rho-1)}{r\sigma},\\
	1 - \mu(r,q_\sigma) + \frac \alpha \sigma - \mu\left(q_\sigma,\frac r \rho\right) - \frac \alpha \sigma
	&= 1 - \mu\left(r, \frac r \rho\right) = 1 - \frac{n(\rho-1)}{r\sigma}
	\end{align*}
hold,
the estimate
	\begin{align}
	&\langle t \rangle^{\mu(2,r)}
	( \| I_2 \|_{L^2}
	+ \langle t \rangle^{\frac s \sigma} \| D^s I_2 \|_{L^2}
	+ \langle t \rangle^{-\frac \alpha \sigma} \| |\cdot|^\alpha I_2 \|_{L^2})
	\nonumber\\
	&\leq C \| v(t) \|_{Y^{s,\alpha,\gamma,r}}
	\langle t \rangle^{\left(1-\frac{n(\rho-1)}{r\sigma}\right)_+}
	\label{eq:5.14}
	\end{align}
follows from \eqref{eq:5.12} and \eqref{eq:5.13} if $r > \gamma$.
When $r=\gamma$,
\eqref{eq:5.12} and \eqref{eq:5.13} imply that the estimate
	\begin{align}
	&\langle t \rangle^{\mu(2,r)}
	( \| I_2 \|_{L^2}
	+ \langle t \rangle^{\frac s \sigma} \| D^s I_2 \|_{L^2}
	+ \langle t \rangle^{-\frac \alpha \sigma} \| |\cdot|^\alpha I_2 \|_{L^2})
	\nonumber\\
	&\leq C \| v \|_{Y^{s,\alpha,\gamma,r}(T)}
	\bigg( 1 + \int_0^{\frac t 2} \langle \tau \rangle^{- \mu(r, \frac r \rho)} d \tau \bigg).
	\label{eq:5.15}
	\end{align}
holds. 
Therefore, \eqref{eq:5.5} follows from \eqref{eq:5.9} and \eqref{eq:5.11}.
Since $\mu(r, \frac r p) = 1$ holds if and only if $\rho = 1 + \frac{r\sigma}{n}$,
\eqref{eq:5.6} follows from \eqref{eq:5.10}, \eqref{eq:5.14}, and \eqref{eq:5.15}, which completes the proof of the lemma.
\end{proof}

\section{Estimates for nonlinearity}
In this subsection,
we estimate the auxiliary norm $Y^{s,\alpha,\gamma,r}(T)$ for the nonlinearity.

In Lemma \ref{Lemma:6.1} bellow,
we show that some weighted Lebesgue spaces and a homogeneous Sobolev space
are embedded into the solution space $X^{s,\alpha}_0$ for $s,\alpha\ge 0$:

\begin{Lemma}
\label{Lemma:6.1}
Let $n\in \mathbb{N}$, $s \geq 0$, $\alpha \geq 0$. Then the embedding holds:
	\begin{align}
	X_0^{s,\alpha} &\hookrightarrow \dot H^{s'},
	\quad \forall s' \in \lbrack 0,s \rbrack,
	\label{eq:6.1}\\
	X_0^{s,\alpha} &\hookrightarrow |\cdot|^{-\alpha'} L^{2},
	\quad \forall \alpha' \in \lbrack 0,\alpha \rbrack,
	\label{eq:6.2}\\
	X_0^{s,\alpha}
	&\hookrightarrow L^\nu,
	\label{eq:6.3}
	\end{align}
where
	\begin{align}
	\frac{2n}{2\alpha+n}
	< \nu
	\begin{cases}
	\leq \infty
	& \mathrm{if} \quad s > n/2,\\
	< \infty
	& \mathrm{if} \quad s = n/2,\\
	\leq \frac{2n}{n-2s}
	& \mathrm{if} \quad s < n/2.
	\end{cases}
	\label{eq:6.4}
	\end{align}
\end{Lemma}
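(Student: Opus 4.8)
The plan is to establish the three embeddings separately, handling \eqref{eq:6.1} and \eqref{eq:6.2} by an elementary splitting and reserving the interpolation machinery for \eqref{eq:6.3}. For \eqref{eq:6.1} I would work on the Fourier side and split frequency space into $\{|\xi|\le 1\}$ and $\{|\xi|>1\}$: on the former $|\xi|^{2s'}\le 1$, so the contribution is bounded by $\|f\|_{L^2}^2$, while on the latter $|\xi|^{2s'}\le|\xi|^{2s}$ because $s'\le s$, so the contribution is bounded by $\|f\|_{\dot H^s}^2$; summing gives $\|f\|_{\dot H^{s'}}^2\le\|f\|_{L^2}^2+\|f\|_{\dot H^s}^2\le\|f\|_{X_0^{s,\alpha}}^2$. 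The embedding \eqref{eq:6.2} is proved identically after replacing frequency space by physical space: on $\{|x|\le 1\}$ one has $|x|^{2\alpha'}\le 1$ and on $\{|x|>1\}$ one has $|x|^{2\alpha'}\le|x|^{2\alpha}$ since $\alpha'\le\alpha$, so that $\||\cdot|^{\alpha'}f\|_{L^2}^2\le\|f\|_{L^2}^2+\||\cdot|^\alpha f\|_{L^2}^2$.

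For \eqref{eq:6.3} I would split the admissible range of $\nu$ at the value $2$. Note first that $(1+|x|^2)^{\alpha}\le C(1+|x|^{2\alpha})$, so $\|\langle\cdot\rangle^\alpha f\|_{L^2}\le C(\|f\|_{L^2}+\||\cdot|^\alpha f\|_{L^2})\le C\|f\|_{X_0^{s,\alpha}}$. For $\frac{2n}{2\alpha+n}<\nu\le 2$ I would write $|f|^\nu=(\langle x\rangle^{2\alpha}|f|^2)^{\nu/2}\,\langle x\rangle^{-\alpha\nu}$ and apply Hölder with exponents $\frac2\nu$ and $\frac{2}{2-\nu}$ to obtain
\[
\|f\|_{L^\nu}
\le\|\langle\cdot\rangle^\alpha f\|_{L^2}
\left(\int_{\mathbb R^n}\langle x\rangle^{-\frac{2\alpha\nu}{2-\nu}}\,dx\right)^{\frac{2-\nu}{2\nu}}.
\]
The weight integral converges exactly when $\frac{2\alpha\nu}{2-\nu}>n$, i.e. when $\nu>\frac{2n}{2\alpha+n}$, which is precisely the stated lower bound; this part of the argument uses only the $L^2$ and weighted $L^2$ components of the norm, and the case $\nu=2$ is trivial.

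For the remaining range $2\le\nu$ up to the upper bound in \eqref{eq:6.4}, I would invoke the Gagliardo--Nirenberg inequality $\|f\|_{L^\nu}\le C\|f\|_{L^2}^{1-\theta}\|f\|_{\dot H^s}^\theta$ with $\theta=\frac ns\bigl(\frac12-\frac1\nu\bigr)$, which follows from Lemma \ref{Lemma:4.1} together with the embeddings $\dot B^0_{\nu,1}\hookrightarrow L^\nu$, $\dot B^s_{2,2}=\dot H^s$ and $\dot B^0_{2,2}=L^2$ (when $s=0$ the upper bound degenerates to $\nu=2$ and the claim is trivial). The admissibility $\theta\in[0,1]$ is exactly the Sobolev threshold recorded in \eqref{eq:6.4}: $\nu\le\frac{2n}{n-2s}$ for $s<n/2$, any finite $\nu$ for $s=n/2$, and $\nu\le\infty$ for $s>n/2$, the endpoint $\nu=\infty$ being the interpolation already used in \eqref{eq:3.14}. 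Young's inequality then yields $\|f\|_{L^\nu}\le C\|f\|_{X_0^{s,\alpha}}$, and combining the two ranges covers the whole interval in \eqref{eq:6.4}. I expect the only delicate point to be the lower-range estimate: one must route through the weighted $L^2$ norm rather than through $|\cdot|^\alpha$ directly, so as not to manufacture a non-integrable singularity at the origin, and then check that the integrability threshold of the negative-power weight reproduces the sharp exponent $\frac{2n}{2\alpha+n}$; the upper range is classical Sobolev embedding.
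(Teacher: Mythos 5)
Your proposal is correct and follows essentially the same route as the paper: the core embedding \eqref{eq:6.3} is split at $\nu=2$, with H\"older against the weight $\langle\cdot\rangle^{-\alpha}$ (whose integrability threshold $\frac{2\alpha\nu}{2-\nu}>n$ reproduces exactly $\nu>\frac{2n}{2\alpha+n}$) for the lower range and the Gagliardo--Nirenberg inequality for the upper range, precisely as in the paper's proof. Your treatment of \eqref{eq:6.1} and \eqref{eq:6.2} by splitting into $\{|\xi|\le 1\}/\{|\xi|>1\}$ and $\{|x|\le 1\}/\{|x|>1\}$ is only a cosmetic variant of the paper's multiplicative interpolation inequalities $\|D^{s'}f\|_{L^2}\le C\|D^{s}f\|_{L^2}^{s'/s}\|f\|_{L^2}^{1-s'/s}$ and $\||\cdot|^{\alpha'}f\|_{L^2}\le\|f\|_{L^2}^{1-\alpha'/\alpha}\||\cdot|^{\alpha}f\|_{L^2}^{\alpha'/\alpha}$, and both arguments yield the stated embeddings.
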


\begin{proof}
By the definition of the solution space $X_0^{s,\alpha}$, the embedding $X_0^{s,\alpha} \hookrightarrow L^2$ holds for any $s \geq 0$ and $\alpha \geq 0$.

For any $s'\in [0,s]$, the embedding \eqref{eq:6.1} holds, since by the interpolation estimate, there exists a positive constant $C=C(s,s')$ such that the inequalities
	\[
	\| D^{s'} f \|_{L^2}
	\leq C \| D^s f \|_{L^2}^{\frac{s'}{s}} \| f \|_{L^2}^{1-\frac{s'}{s}}
	\leq C \| f \|_{X_0^{s,\alpha}}
	\]
hold for any $ f \in X_0^{s,\alpha}$.

For any $\alpha'\in [0,\alpha]$, the embedding \eqref{eq:6.2} holds, because by the H\"older inequality, the estimates
	\[
	\| |\cdot|^{\alpha'} f \|_{L^2}
	\leq
	\| f \|_{L^2}^{1-\frac{\alpha'}\alpha}
	\| | \cdot|^\alpha f \|_{L^2}^{\frac{\alpha'}\alpha}
	\leq
	\| f \|_{X_0^{s,\alpha}}
	\]
holds for any $ f \in X_0^{s,\alpha}$.

When $\nu \geq 2$, the embedding \eqref{eq:6.3} follows from the Gagliardo-Nirenberg inequality and the assumption of $\nu$. When $\nu \in (\frac{2n}{2\alpha+n} ,2 )$, by the H\"older inequality, the estimate
	\[
	\| f \|_{L^\nu}
	\leq
	\| \langle \cdot \rangle^{-\alpha} \|_{L^{\frac{2 \nu}{2-\nu}}}
	\| \langle \cdot \rangle^{\alpha} f \|_{L^2}
	\]
holds for any $f\in X_0^{s,\alpha}$and $\langle \cdot \rangle^{-\alpha} \in L^{\frac{2 \nu}{2-\nu}}$
if $\nu$ satisfies $\nu>\frac{2n}{2\alpha+n}$, which implies \eqref{eq:6.3}. This completes the proof of the lemma.
\end{proof}

\begin{Lemma}
\label{Lemma:6.2}
Let $n\in \mathbb{N}$, $\alpha, s \geq 0$ and $\rho>1$.
We assume that $\rho$ satisfies \eqref{eq:2.5} and the inequality
\begin{align}
	\rho \leq 1 + \frac{\min(n,\sigma)}{(n-2s)_+}
	\label{eq:6.5}
	\end{align}
and the function $\mathcal{N}$ satisfies the estimate (\ref{eq:2.8}) with $\bar{s}=0$. Then there exists a positive constant $C=C(\alpha,s,\rho)>0$ such that the estimates
	\begin{align}
	\| \mathcal N(f) - \mathcal N(g) \|_{L^{q_\sigma}}
	&\leq C (\| f \|_{X_0^{s,\alpha}} + \| g \|_{X_0^{s,\alpha}})^{\rho-1} \| f - g \|_{L^2},
	\label{eq:6.6}\\
	\| | \cdot |^\alpha ( \mathcal N(f) - \mathcal N(g) ) \|_{L^{q_\sigma}}
	&\leq C (\| f \|_{X_0^{s,\alpha}} + \| g \|_{X_0^{s,\alpha}})^{\rho-1} \| | \cdot |^\alpha (f - g) \|_{L^2}
	\label{eq:6.7}
	\end{align}
hold for any $f, g \in X_0^{s,\alpha}$.
\end{Lemma}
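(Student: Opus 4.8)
The plan is to reduce both \eqref{eq:6.6} and \eqref{eq:6.7} to the embedding $X_0^{s,\alpha}\hookrightarrow L^\nu$ supplied by Lemma~\ref{Lemma:6.1}, via a single application of H\"older's inequality. First I would record the pointwise consequence of the hypothesis: since $\rho>1$, the estimate \eqref{eq:2.8} with $\overline s=0$ reads
\[
|\mathcal N(f)(x)-\mathcal N(g)(x)|
\le C(|f(x)|+|g(x)|)^{\rho-1}|f(x)-g(x)|
\]
pointwise in $x$. Taking $L^{q_\sigma}$ norms and applying H\"older's inequality with the splitting $\frac1{q_\sigma}=\frac{\rho-1}{\nu}+\frac12$ then yields
\[
\|\mathcal N(f)-\mathcal N(g)\|_{L^{q_\sigma}}
\le C\big\||f|+|g|\big\|_{L^\nu}^{\rho-1}\,\|f-g\|_{L^2},
\]
so that the whole problem is moved to controlling $\||f|+|g|\|_{L^\nu}$ by the $X_0^{s,\alpha}$ norm.

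The key bookkeeping step is to identify $\nu$. Writing out $\frac1{q_\sigma}-\frac12$ from the definition $q_\sigma=\frac{n+\max(n,\sigma)}{n+\sigma}$ in \eqref{eq:5.1}, one finds $\frac1{q_\sigma}-\frac12=\frac{\min(n,\sigma)}{2n}$ in both cases $\sigma\ge n$ and $\sigma<n$; hence the H\"older relation forces
\[
\nu=\frac{2n(\rho-1)}{\min(n,\sigma)}.
\]
I would then verify that this $\nu$ lies in the admissible range \eqref{eq:6.4} of Lemma~\ref{Lemma:6.1}. The lower bound $\nu>\frac{2n}{2\alpha+n}$ is equivalent to $\rho>1+\frac{\min(n,\sigma)}{2\alpha+n}$, i.e. exactly the assumption \eqref{eq:2.5}; the upper bound (relevant only when $s<n/2$) $\nu\le\frac{2n}{n-2s}$ is equivalent to $\rho\le1+\frac{\min(n,\sigma)}{n-2s}$, i.e. exactly \eqref{eq:6.5} with the convention $(n-2s)_+=n-2s$, while for $s\ge n/2$ the condition \eqref{eq:6.5} is vacuous and \eqref{eq:6.4} imposes at most $\nu<\infty$, which holds since $\rho<\infty$. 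Thus Lemma~\ref{Lemma:6.1} gives $\||f|+|g|\|_{L^\nu}\le C(\|f\|_{X_0^{s,\alpha}}+\|g\|_{X_0^{s,\alpha}})$ and \eqref{eq:6.6} follows.

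For the weighted estimate \eqref{eq:6.7} I would run the identical argument, simply keeping the weight on the difference: writing $|x|^\alpha(|f|+|g|)^{\rho-1}|f-g|=(|f|+|g|)^{\rho-1}\cdot|x|^\alpha|f-g|$ and applying H\"older with the same pair of exponents (the factor $(|f|+|g|)^{\rho-1}$ in $L^{p_1}$, $\frac1{p_1}=\frac{\rho-1}{\nu}$, and $|\cdot|^\alpha|f-g|$ in $L^2$) produces $\||f|+|g|\|_{L^\nu}^{\rho-1}\,\||\cdot|^\alpha(f-g)\|_{L^2}$, after which the same embedding closes the estimate. The only genuine content is the exponent bookkeeping of the second paragraph—matching the forced value $\nu=\frac{2n(\rho-1)}{\min(n,\sigma)}$ to the admissible window \eqref{eq:6.4} and recognizing that its two endpoints are precisely \eqref{eq:2.5} and \eqref{eq:6.5}; everything else is a routine H\"older splitting. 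I note that this window legitimately includes exponents $\nu\in(0,1)$ when $\alpha>n/2$ (for instance $n=\sigma=\alpha=1$, $\rho\in(\tfrac43,2]$ gives $\nu=2(\rho-1)<1$), but such exponents are already covered by Lemma~\ref{Lemma:6.1} upon reading $L^\nu$ with $\nu<1$ as a quasi-norm, for which both H\"older's inequality and the embedding remain valid, so no additional work is needed there.
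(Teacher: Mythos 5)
Your proposal is correct and follows essentially the same route as the paper's proof: the pointwise bound from \eqref{eq:2.8} with $\overline s=0$, H\"older's inequality with the exponent relation $\frac{1}{q_\sigma}=\frac{\rho-1}{\nu}+\frac12$ forcing $\nu=\frac{2n}{\min(n,\sigma)}(\rho-1)$ (the paper's \eqref{eq:6.8}), verification that \eqref{eq:2.5} and \eqref{eq:6.5} place $\nu$ exactly in the window \eqref{eq:6.4}, and then the embedding of Lemma \ref{Lemma:6.1}, with \eqref{eq:6.7} obtained by letting the weight ride on the $L^2$ factor. Your closing observation that the window can admit $\nu<1$, where $L^\nu$ is only a quasi-norm, is a legitimate fine point the paper passes over silently, and your resolution (H\"older and the weighted embedding of Lemma \ref{Lemma:6.1} remain valid in that range) is the right one.
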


\begin{proof}
By the assumption (\ref{eq:2.8}) and the H\"older inequality, the inequalities
	\begin{align*}
	\| \mathcal N(f) - \mathcal N(g) \|_{L^{q_\sigma}}
	&\leq C \| (|f|+|g|)^{\rho-1} (f-g) \|_{L^{q_\sigma}}\\
	&\leq C (\| f \|_{L^{\nu}} + \| g \|_{L^\nu})^{\rho-1} \| f - g \|_{L^{2}},
	\end{align*}
hold, where $\nu \in (0,\infty)$ is given by
	\[
	\frac{1}{q_\sigma}
	= (\rho-1) \frac{1}{\nu} + \frac{1}{2}.
	\]
This is rewritten by
	\begin{align}
	\nu := \frac{2n}{\min(n,\sigma)} (\rho-1).
	\label{eq:6.8}
	\end{align}
Then \eqref{eq:2.5}, \eqref{eq:6.5}, and \eqref{eq:6.8} imply that the inequalities
	\[
	\frac{2n}{2\alpha+n}
	< \nu \leq \frac{2n}{(n-2s)_+},
	\]
hold, which coincides with \eqref{eq:6.4}.
Therefore Lemma \ref{Lemma:6.1} implies \eqref{eq:6.6}.
\eqref{eq:6.7} is proved similarly, which completes the proof of the lemma.
\end{proof}

Next we show nonlinear estimates from a homogeneous Besov space $\dot B_{P,2}^s$ to the solution spaces $X_0^{s,\alpha}$ and $X_0^{s',\alpha}$, where the exponents $s,s'$ and $P$ are given in Lemma \ref{Lemma:6.3}. More precisely, in Lemma \ref{Lemma:6.3} below,
we show the estimates for a nonlinearity $\mathcal M$, whose $\mathcal M$ is substituted by $\mathcal N^{(\overline s)}$ with some integer $\overline s$
in the proof of Lemma \ref{Lemma:6.5} below.

\begin{Lemma}
\label{Lemma:6.3}
Let $n\in \mathbb{N}$, $s \geq s'> 0$, $\rho_0 > 1$ and $\alpha \geq 0$.
Let $s_0$ be a positive number satisfying
$s_0 < \min(2,\rho_0)$ and $s_0 \leq s$.
Let $\mathcal M \in C^1(\mathbb R)$ satisfy $\mathcal M^{(\ell)}(0) = 0$ for $\ell=0,1$ and for $\ell=0,1$, there exists a positive constant $C=C(\ell)>0$ such that the estimate
	\begin{align}
	|\mathcal M^{(\ell)}(x) - \mathcal M^{(\ell)}(y) |
	\leq
	\begin{cases}
	C (|x|+|y|)^{\rho_0-\ell-1} |x-y|
	&\mathrm{if} \quad \rho_0 - \ell \geq 1,\\
	C |x-y|^{\rho_0-\ell}
	&\mathrm{if} \quad \rho_0 - \ell < 1
	\end{cases}
	\label{eq:6.9}
	\end{align}
holds for any $x,y\in \mathbb{R}$.
We also assume that $P \geq 1$ satisfies
	\begin{align}
	\frac 1 P
	&<
	\frac{n+2\alpha}{2n} (\rho_0-1) + \frac 1 2 + \frac{s-s_0}{s} \frac{\min(2\alpha,n)}{2n},
	\label{eq:6.10}\\
	\frac 1 P
	&\begin{cases}
	\displaystyle
	\geq \left(\frac 1 2 - \frac{s-s_0}{n}\right)_+
	+ (\rho_0-1)\left(\frac 1 2 - \frac{s'}{n}\right)_+
	&\mathrm{if} \quad s' \neq \frac n 2,\\
	\displaystyle
	> \left(\frac 1 2 - \frac{s-s_0}{n}\right)_+
	&\mathrm{if} \quad s' = \frac n 2.
	\end{cases}
	\label{eq:6.11}
	\end{align}
Then there exists a positive constant $C_1>0$ such that the estimate
	\begin{align}
	\| \mathcal M(f) \|_{\dot B_{P,2}^{s_0}}
	\leq C_1 \| f \|_{X_0^{s,\alpha}}
	\| f \|_{X_0^{s',\alpha}}^{\rho_0-1}
	\label{eq:6.12}
	\end{align}
holds for all $f \in X_0^{s,\alpha}$.
Moreover,
for any positive number $d_{\rho_0,s_0} \leq 1$ satisfying
	\[
	d_{\rho_0,s_0}
	\begin{cases}
	\leq \rho_0- 1
	&\mathrm{if} \quad 0 < s_0 < 1,\\
	< \rho_0- s_0
	&\mathrm{if} \quad s_0 \geq 1,
	\end{cases}
	\]
there exists a positive constant $C_{d_{\rho_0,s_0}}$ such that
	\begin{align}
	\| \mathcal M(f) - \mathcal M(g) \|_{\dot B_{P,2}^{s_0}}
	&\leq C_{d_{\rho_0,s_0}}
	(\| f \|_{X_0^{s',\alpha}} + \| g \|_{X_0^{s',\alpha}}
	)^{\rho_0-1}
	\| f - g \|_{X_0^{s,\alpha}}
	\nonumber\\
	&+ C_{d_{\rho_0,s_0}}
	(\| f \|_{X_0^{s,\alpha}} + \| g \|_{X_0^{s,\alpha}}
	)^{\rho_0-d_{p_0,s_0}}
	\| f - g \|_{X_0^{s',\alpha}}^{d_{\rho_0,s_0}}
	\label{eq:6.13}
	\end{align}
holds for any $f, g \in X_0^{s,\alpha}$.
\end{Lemma}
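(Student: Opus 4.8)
The plan is to estimate the composition $\mathcal{M}(f)$ in $\dot B_{P,2}^{s_0}$ by combining the difference characterizations \eqref{eq:1.15}--\eqref{eq:1.16} of homogeneous Besov norms with the Gagliardo--Nirenberg inequality (Lemma \ref{Lemma:4.1}), the fractional Leibniz rule (Lemma \ref{Lemma:4.2}), and the embeddings of Lemma \ref{Lemma:6.1}. Since $0 < s_0 < \min(2,\rho_0) \le 2$, I would split the argument according to $0 < s_0 < 1$ and $1 \le s_0 < 2$.

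In the range $0 < s_0 < 1$ I use the first-difference characterization \eqref{eq:1.15}. Because $\rho_0 > 1$, the case $\ell = 0$ of \eqref{eq:6.9} gives the pointwise bound $|(\tau_y-1)\mathcal{M}(f)| \le C(|\tau_y f|+|f|)^{\rho_0-1}|(\tau_y-1)f|$. Hölder's inequality with $\frac 1 P = \frac{\rho_0-1}{\nu} + \frac{1}{q_2}$ separates this into $\|f\|_{L^\nu}^{\rho_0-1}$ and $\|(\tau_y-1)f\|_{L^{q_2}}$; integrating the latter against $\rho^{-s_0}\frac{d\rho}{\rho}$ and using \eqref{eq:1.15} produces $\|f\|_{\dot B_{q_2,2}^{s_0}}$, so that $\|\mathcal{M}(f)\|_{\dot B_{P,2}^{s_0}} \le C\|f\|_{L^\nu}^{\rho_0-1}\|f\|_{\dot B_{q_2,2}^{s_0}}$. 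I then bound $\|f\|_{L^\nu}^{\rho_0-1} \le C\|f\|_{X_0^{s',\alpha}}^{\rho_0-1}$ via \eqref{eq:6.3} and control $\|f\|_{\dot B_{q_2,2}^{s_0}} \le C\|f\|_{X_0^{s,\alpha}}$ by interpolating with Lemma \ref{Lemma:4.1} between $\dot H^s = \dot B_{2,2}^s$ and $L^\mu \hookrightarrow \dot B_{\mu,\infty}^0$, both dominated by $\|f\|_{X_0^{s,\alpha}}$ through \eqref{eq:6.1} and \eqref{eq:6.3}; since the two interpolation exponents sum to one, the power on $X_0^{s,\alpha}$ collapses to one. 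The admissibility of $\nu, q_2, \mu$ and of the Gagliardo--Nirenberg parameter is exactly what \eqref{eq:6.10}--\eqref{eq:6.11} encode: the lower bound \eqref{eq:6.11} keeps $\nu$ and $\mu$ below the Sobolev thresholds of \eqref{eq:6.4}, while the upper bound \eqref{eq:6.10} forces $\nu > \frac{2n}{2\alpha+n}$ so that the weighted embedding applies.

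For $1 \le s_0 < 2$ I would differentiate once: using $\|\mathcal{M}(f)\|_{\dot B_{P,2}^{s_0}} \sim \sum_j \|\partial_j \mathcal{M}(f)\|_{\dot B_{P,2}^{s_0-1}}$ together with the chain rule $\partial_j\mathcal{M}(f) = \mathcal{M}'(f)\,\partial_j f$, I apply Lemma \ref{Lemma:4.2} at regularity $s_0 - 1 \in [0,1)$ to the product $\mathcal{M}'(f)\,\partial_j f$. This yields one term with $\|\mathcal{M}'(f)\|_{L^{p_2}}$ and one with $\|\mathcal{M}'(f)\|_{\dot B_{p_1,2}^{s_0-1}}$. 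The former is handled through $|\mathcal{M}'(z)| \le C|z|^{\rho_0-1}$ (a consequence of $\mathcal{M}'(0)=0$ and the $\ell=1$ line of \eqref{eq:6.9}) followed by \eqref{eq:6.3}; the latter is a composition estimate for $\mathcal{M}'$, treated as in the first case but now branching on the $\ell=1$ line of \eqref{eq:6.9} into $\rho_0 \ge 2$, where $|(\tau_y-1)\mathcal{M}'(f)| \le C(|\tau_y f|+|f|)^{\rho_0-2}|(\tau_y-1)f|$, and $\rho_0 < 2$, where $|(\tau_y-1)\mathcal{M}'(f)| \le C|(\tau_y-1)f|^{\rho_0-1}$. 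The remaining factors $\|\partial_j f\|_{L^{p_4}}$ and $\|\partial_j f\|_{\dot B_{p_3,2}^{s_0-1}}$ are controlled by $\|f\|_{X_0^{s,\alpha}}$ via Lemma \ref{Lemma:4.1} and \eqref{eq:6.1}, using $s \ge s_0 \ge 1$.

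Finally, the Lipschitz bound \eqref{eq:6.13} follows the same template applied to $\mathcal{M}(f)-\mathcal{M}(g) = (f-g)\int_0^1 \mathcal{M}'(g+\theta(f-g))\,d\theta$: the difference of compositions is reduced to $(\tau_y-1)(f-g)$ and $(\tau_y-1)\mathcal{M}'(\cdot)$, and the Hölder continuity of $\mathcal{M}'$ (or of the integrand) is exactly what forces the auxiliary exponent $d_{\rho_0,s_0}$ and the two-term right-hand side. I expect the main obstacle to be purely the exponent bookkeeping: one must simultaneously fix the Hölder exponents $\nu,q_2$ (and $p_1,\dots,p_4$ when $s_0 \ge 1$), the interpolation endpoint $\mu$, and the Gagliardo--Nirenberg parameter so that every hypothesis of Lemmas \ref{Lemma:4.1}, \ref{Lemma:4.2}, and \ref{Lemma:6.1} holds and the output powers collapse to exactly $\|f\|_{X_0^{s,\alpha}}\|f\|_{X_0^{s',\alpha}}^{\rho_0-1}$. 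Verifying that the feasible region of this exponent system is nonempty is precisely the content of \eqref{eq:6.10}--\eqref{eq:6.11}, and the case $s_0 \ge 1$ combined with the sub-split $\rho_0 \lessgtr 2$ is where this verification is most delicate.
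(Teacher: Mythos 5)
Your plan for the product estimate \eqref{eq:6.12} is workable but takes a genuinely different route from the paper. The paper does not split $s_0<1$ from $s_0\ge 1$ there at all: it invokes the fractional chain rule \cite[Lemma 3.4]{GOV} as a black box on the whole range $0<s_0<\min(2,\rho_0)$, obtaining $\| \mathcal M(f) \|_{\dot B_{P,2}^{s_0}} \leq C \| f \|_{\dot B_{P_0,2}^{s_0}} \| f \|_{L^{(\rho_0-1) P_1}}^{\rho_0-1}$ with $\tfrac 1P=\tfrac1{P_0}+\tfrac1{P_1}$, so that the entire derivative burden $s_0$ sits on a single factor; conditions \eqref{eq:6.10}--\eqref{eq:6.11} are then verified to be exactly the feasibility of the embedding \eqref{eq:6.14}, i.e.\ $X_0^{s,\alpha}\times X_0^{s',\alpha}\hookrightarrow \dot B^{s_0}_{P_0,2}\times L^{(\rho_0-1)P_1}$ (displays \eqref{eq:6.15}--\eqref{eq:6.18}). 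Your $s_0<1$ argument re-proves this chain rule by hand and is fine. Your lifting-plus-Leibniz argument for $1\le s_0<2$, however, distributes derivatives over \emph{two} copies of $f$ ($s_0-1$ on $\mathcal M'(f)$ via the composition estimate, and one full derivative on $\partial_j f$), so the output powers only collapse to $\| f \|_{X_0^{s,\alpha}} \| f \|_{X_0^{s',\alpha}}^{\rho_0-1}$ after an extra Gagliardo--Nirenberg rebalancing between $X_0^{s,\alpha}$- and $X_0^{s',\alpha}$-controlled norms. You assert that the feasibility of this enlarged exponent system is ``precisely the content of \eqref{eq:6.10}--\eqref{eq:6.11},'' but those conditions were derived for the single-factor splitting, and you never check that the Leibniz splitting is admissible under them, in particular at the equality endpoint of \eqref{eq:6.11} where the critical Sobolev embedding is already saturated. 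This verification is a real obligation, not bookkeeping to be deferred.

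The genuine gap is in the difference estimate \eqref{eq:6.13} for $s_0>1$, which is the delicate part of the lemma and the reason the paper proves Lemma \ref{Lemma:A.1} in the appendix. Your template, after differentiating, requires a bound on $\| \mathcal M'(f)-\mathcal M'(g) \|_{\dot B^{s_0-1}_{p_1,2}}$. But $\mathcal M$ is only $C^1$: the fundamental-theorem identity you use one level down, $\mathcal M(f)-\mathcal M(g)=(f-g)\int_0^1 \mathcal M'(g+\theta(f-g))\,d\theta$, has no analogue for $\mathcal M'$, so you cannot convert $(\tau_y-1)\bigl(\mathcal M'(f)-\mathcal M'(g)\bigr)$ into terms involving $(\tau_y-1)(f-g)$ by differentiating an integrand. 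The only available input is the H\"older bound \eqref{eq:6.9} with $\ell=1$ applied to the four-point quantity $\mathcal M'(\tau_y f)-\mathcal M'(\tau_y g)-\mathcal M'(f)+\mathcal M'(g)$, which admits two competing pointwise bounds (one producing translation differences of $f$ and $g$, the other producing $|f-g|$), and the loss exponent $d_{\rho_0,s_0}$ arises from taking a geometric mean of the two; one must then also check that the resulting auxiliary Besov norms, of the type $\dot B^{s_0/(\rho_0-d_{\rho_0,s_0})}_{(\rho_0-d_{\rho_0,s_0})P_2,\,2(\rho_0-d_{\rho_0,s_0})}$, have regularity strictly below the difference-characterization threshold --- this is exactly where the strict constraint $d_{\rho_0,s_0}<\rho_0-s_0$ for $s_0\ge 1$ comes from. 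The paper carries all of this out via the second-difference characterization \eqref{eq:1.16}, the algebraic identity of Lemma \ref{Lemma:A.1}, and the interpolation in \eqref{eq:6.21}--\eqref{eq:6.24}, without ever differentiating $\mathcal M(f)$. Your proposal names the phenomenon (``the H\"older continuity of $\mathcal M'$ \ldots\ forces the auxiliary exponent'') but supplies no mechanism; as written, the step would only go through if $\mathcal M\in C^2$, which is not assumed.
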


\begin{Remark}
The above condition on
$d_{\rho_0,s_0}$
says that
$d_{\rho_0, s_0}<1$
must hold
if $\rho_0 - s_0 \le 1$.
However, when $\rho_0 - s_0 = 1$, it is naturally expected that we can take
$d_{\rho_0, s_0}=1$.
Indeed, when $\rho_0 \ge 2$,
by modifying the conditions
\eqref{eq:6.10}--\eqref{eq:6.11} to slightly stronger ones,
we can actually take
$d_{\rho_0, s_0}=1$
even if
$\rho_0 - s_0 = 1$.
However, Lemma \ref{Lemma:6.3} is sufficient to construct a solution to \eqref{eq:1.10},
and we do not discuss it here.
\end{Remark}

\begin{proof}
We set two positive numbers $P_0$ and $P_1$ satisfying
	\[
	\frac 1 P = \frac 1 {P_0} + \frac 1 {P_1}.
	\]
The assumption \eqref{eq:6.9} and the fractional chain rule \cite[Lemma 3.4]{GOV} imply that there exists a positive constant $C>0$ such that the estimate
	\[
	\| \mathcal M(f) \|_{\dot B_{P,2}^{s_0}}
	\leq C \| f \|_{\dot B_{P_0,2}^{s_0}}
	\| f \|_{L^{(\rho_0-1) P_1}}^{\rho_0-1}
	\]
holds, provided that the right-hand side is finite. In order to show the estimate \eqref{eq:6.12},
it is sufficient to prove that for the numbers $P_0$ and $P_1$, the embedding holds:
	\begin{align}
	X_0^{s, \alpha} \times X_0^{s', \alpha}
	\hookrightarrow \dot B_{P_0,2}^{s_0} \times L^{(\rho_0-1) P_1}.
	\label{eq:6.14}
	\end{align}
We claim that the relations
	\begin{align}
	X_0^{s, \alpha}
	\hookrightarrow H^{s} \cap L^\gamma
	\hookrightarrow \dot B_{P_0,2}^{s_0}
	\label{eq:6.15}
	\end{align}
hold if $\gamma\in (1,2]$ satisfies the condition \eqref{eq:6.4}
with $\nu = \gamma$ and
	\begin{align}
	\frac {s_0} s \frac 1 2 + \frac{s-s_0}{s} \frac 1 \gamma
	\geq \frac 1 {P_0}
	\geq \bigg( \frac 1 2 - \frac{s-s_0}{n} \bigg)_+.
	\label{eq:6.16}
	\end{align}
The first embedding of \eqref{eq:6.15} follows from Lemma \ref{Lemma:6.1}.
When $\frac 1 {P_0}$ is given by
the LHS of the first estimate of \eqref{eq:6.16},
the second embedding of \eqref{eq:6.15} follows from Lemma \ref{Lemma:4.1}
with $\left(\theta,\dot B_{p_1,q_1}^{s_1},\dot B_{p_2,q_2}^{s_2}\right)$
replaced by $\left(\frac {s_0}{s}, \dot B_{2,2}^s, \dot B_{\gamma,2}^0\right)$.
When $\frac 1 {P_0}$ is given by
the RHS of the last estimate of \eqref{eq:6.16},
the second embedding of \eqref{eq:6.15} follows from the Sobolev embedding.
Here we note that the embedding $\dot B_{2,2}^{\frac n 2} \hookrightarrow \dot B_{\infty,2}^0$
also holds.
Therefore,
we obtain the relations \eqref{eq:6.15}
for $P_0$ satisfying \eqref{eq:6.16}
by the interpolation.
Then the first estimate of \eqref{eq:6.4} and \eqref{eq:6.16} imply that
the estimates
	\begin{align}
	\frac 1 2 + \frac{s-s_0}{s} \frac{\min(2\alpha,n)}{2n}
	&\ge \frac {s_0} s \frac 1 2 + \frac{s-s_0}{s} \min\bigg( \frac{2 \alpha + n}{2n}, 1 \bigg)
	\nonumber\\
	&> \frac 1 {P_0}
	\geq \bigg( \frac 1 2 - \frac{s-s_0}{n} \bigg)_+
	\label{eq:6.17}
	\end{align}
hold.
In addition, if the condition \eqref{eq:6.4} with $(s,\gamma)$ replaced by $(s',(\rho_0-1) P_1)$ holds i.e., the estimates
	\begin{align}
	\frac{2 \alpha + n}{2n}
	> \frac 1 {(\rho_0-1) P_1}
	\begin{cases}
	\geq ( \frac 1 2 - \frac{s'}{n} )_+
	&\mathrm{if} \quad s' \neq \frac n 2,\\
	> 0
	&\mathrm{if} \quad s' = \frac n 2,
	\end{cases}
	\label{eq:6.18}
	\end{align}
hold, then Lemma \ref{Lemma:6.1} implies that
$X_0^{s',\alpha}$ is embedded into $L^{(\rho_0-1) P_1}$.
We remark that \eqref{eq:6.18} may admit $P_1 < 1$
but the assumption $P \geq 1$ excludes this case.
By summing up the first and last estimates of \eqref{eq:6.17} and \eqref{eq:6.18},
we obtain the conditions \eqref{eq:6.10} and \eqref{eq:6.11}, respectively.

We next show the estimate \eqref{eq:6.13}.
When $s_0 = 1$, \eqref{eq:6.13} follows from the classical chain rule of differentiation.
When $s_0\ne 1$,
we divide the proof
into two cases where $s_0 < 1$ and
$s_0 > 1$.

We start from the proof in the case where $s_0 < 1$.
We compute
	\begin{align*}
	&(\tau_y-1) (\mathcal M(f) - \mathcal M(g))\\
	&= \int_0^1 \mathcal M'(\theta \tau_y f + (1-\theta) f) d \theta (\tau_y -1 ) f
	- \int_0^1 \mathcal M'(\theta \tau_y g + (1-\theta) g) d \theta (\tau_y -1 ) g\\
	&= \int_0^1 \mathcal M'(\theta \tau_y f + (1-\theta) f) d \theta (\tau_y -1 ) (f-g)\\
	&+ \int_0^1 ( \mathcal M'(\theta \tau_y f + (1-\theta) f)
		- \mathcal M'(\theta \tau_y g + (1-\theta) g) ) d \theta (\tau_y -1 ) g.
	\end{align*}
Hence, by the assumption (\ref{eq:6.9}), the estimate
	\begin{align}
	&| (\tau_y-1) (\mathcal M(f) - \mathcal M(g))|
	\nonumber\\
	&\leq C (\tau_y+1)|f|^{\rho_0-1} | (\tau_y-1) (f -g)|
	\nonumber\\
	&+ C (\tau_y+1)(|f|+|g|)^{(\rho_0-2)_+}
	(\tau_y+1)|f-g|^{\min(\rho_0-1,1)} | (\tau_y-1) g|
	\label{eq:6.19}
	\end{align}
holds.
By the H\"older inequality with $P_0$ and $P_1$ above,
\eqref{eq:6.19} implies that the estimate
	\begin{align}
	\| (\tau_y-1) (\mathcal M(f) - \mathcal M(g))\|_{L^{P}}
	&\leq C \|f\|_{L^{(\rho_0 -1) P_1}}^{\rho_0-1}
	\| (\tau_y-1) (f -g)\|_{L^{P_0}}
	\nonumber\\
	&+ C \|f\|_{L^{(\rho_0 -1) P_1}}^{(\rho_0-2)_+}
	\|f-g\|_{L^{(\rho_0 -1) P_1}}^{\min(\rho_0-1,1)}
	\| (\tau_y-1) g \|_{L^{P_0}}
	\label{eq:6.20}
	\end{align}
holds.
Noting that the identity
	\[
	(\rho_0 - 2)_+ + \min(\rho_0 -1 ,1)
	= \rho_0 -1
	\]
holds, and combining
\eqref{eq:1.15}, \eqref{eq:6.14}, and \eqref{eq:6.20},
we obtain \eqref{eq:6.13} when $s_0 < 1$.

We next consider the case where $s_0 > 1$.
In this case, we use the equivalence \eqref{eq:1.16}.
Lemma \ref{Lemma:A.1} implies that it is sufficient to show that there exists a positive constant $C_{d_{\rho_0},s_0}>0$ such that the estimate
	\begin{align}
	&\| h^{-s_0} \sup_{|y| \leq h}
		\|
			|f-g|^{d_{\rho_0,s_0}} |(\tau_y-1) f|^{\rho_0-1-d_{\rho_0,s_0}} (\tau_y-1) g
		\|_{L^P}
	\|_{L^2(0,\infty, \frac {dh} h)}
	\nonumber\\
	&\leq C \| f - g \|_{X_0^{s',\alpha}}^{d_{\rho_0,s_0}}
	( \|f\|_{X_0^{s,\alpha}} + \|g\|_{X_0^{s,\alpha}})^{\rho_0 - d_{\rho_0,s_0}}
	\label{eq:6.21}
	\end{align}
holds, since the other terms are easily or similarly controlled.
With $P_0$ and $P_1$ above, we take a positive number $P_2$ satisfying the identity
	\begin{align*}
	\frac 1 P
	&= \frac{d_{\rho_0,s_0}}{(\rho_0-1) P_1} + \frac 1 {P_2},
	\end{align*}
holds, namely, we set
	\[
	\frac 1 P_2
	:= \frac 1 {P_0} + \frac{\rho_0-d_{\rho_0,s_0}-1}{(\rho_0-1)P_1}.
	\]
Then by the H\"older inequality, the following estimate holds:	\begin{align}
	&\| |f-g|^{d_{\rho_0,s_0}} |(\tau_y-1) f|^{\rho_0-d_{\rho_0,s_0}-1} (\tau_y-1) g
		\|_{L^P}
	\nonumber\\
	&\leq
	\| f - g \|_{L^{(\rho_0-1) P_1}}^{d_{\rho_0,s_0}}
	\| (\tau_y - 1) f \|_{L^{(\rho_0-d_{\rho_0,s_0}) P_2}}^{\rho_0-d_{\rho_0,s_0}-1}
	\| (\tau_y - 1) g \|_{L^{(\rho_0-d_{\rho_0,s_0}) P_2}}.
	\label{eq:6.22}
	\end{align}
Here we note that the $\dot B_{(\rho_0 - d_{\rho_0,s_0}) P_2, 2 (\rho_0 - d_{\rho_0,s_0})}^{\frac{s_0}{\rho_0-d_{\rho_0,s_0}}}$ norm allows the equivalence of \eqref{eq:1.15}
because of the assumption $d_{\rho_0,s_0} < \rho_0 - s_0$.
Therefore, by the estimate \eqref{eq:6.22}, the H\"older inequality,
and \eqref{eq:6.14}, the LHS of \eqref{eq:6.21}
is estimated by	\begin{align}
	&\| h^{-s_0} \sup_{|y| \leq h} \| |f-g|^{d_{\rho_0,s_0}} |(\tau_y-1) f|^{\rho_0-d_{\rho_0,s_0}-1}
		(\tau_y-1) g\|_{L^P} \|_{L^2(0,\infty; \frac {dh}h)}
	\nonumber\\
	&\leq \| f-g \|_{L^{(\rho_0-1)P_1}}^{d_{\rho_0,s_0}}
	\nonumber\\
	&\cdot \| h^{-s_0} \sup_{|y| \leq h}
		(\|(\tau_y-1) f\|_{L^{(\rho_0-d_{\rho_0,s_0})P_2}}^{\rho_0-d_{\rho_0,s_0}-1}
		\| (\tau_y-1) g \|_{L^{(\rho_0-d_{\rho_0,s_0})P_2}})
		\|_{L^2(0,\infty; \frac {dh}h)}
	\nonumber\\
	&\leq \| f - g \|_{X_0^{s',\alpha}}^{d_{\rho_0,s_0}}
	\| f
		\|_{\dot B_{(\rho_0-d_{\rho_0,s_0}) P_2, 2(\rho_0-d_{\rho_0,s_0})}%
		^{\frac{s_0}{\rho_0 -d_{\rho_0,s_0}}}}^{\rho_0-d_{\rho_0,s_0}-1}
	\| g
		\|_{\dot B_{(\rho_0-d_{\rho_0,s_0}) P_2, 2(\rho_0-d_{\rho_0,s_0})}%
		^{\frac{s_0}{\rho_0-d_{p_0,s_0}}}}.
	\label{eq:6.23}
	\end{align}
The interpolation estimate Lemma \ref{Lemma:4.1} and \eqref{eq:6.14} imply that
the estimates
	\begin{align}
	\| f
		\|_{\dot B_{(\rho_0-d_{\rho_0,s_0}) P_2, 2(\rho_0-d_{\rho_0,s_0})}%
		^{\frac{s_0}{\rho_0-d_{\rho_0,s_0}}}}
	\leq C
	\| f \|_{\dot B_{P_0, 2}^{s_0}}^{\frac 1 {\rho_0-d_{\rho_0,s_0}}}
	\| f \|_{\dot B_{(\rho_0-1)P_1, \infty}^{0}}^{\frac{\rho_0-d_{\rho_0,s_0}-1}{\rho_0-d_{\rho_0,s_0}}}
	\leq C \| f \|_{X_0^{s,\alpha}}
	\label{eq:6.24}
	\end{align}
hold.
Hence, the estimates \eqref{eq:6.23} and \eqref{eq:6.24} imply that the inequality \eqref{eq:6.21} holds, which completes the proof of the lemma.

\end{proof}

Now we show the following nonlinear estimates from the solution space $X_0^{s,\alpha}$ to the auxiliary space $Y_0^{s,\alpha,\gamma}$:
\begin{Lemma}[Estimate for the nonlinearity from the solution space to the auxiliary space]
\label{Lemma:6.5}
Let $n\in \mathbb{N}$, $\alpha \geq 0$, $0 \leq s' \leq s < \rho < \infty$ and $\gamma \in [1,2]$.
We assume that the estimates	\begin{align}
	\frac{2n}{2\alpha+n}
	< \rho \gamma
	\begin{cases}
	< \infty
	& \mathrm{if} \quad s \geq n/2,\\
	\leq \frac{2n}{n-2s}
	& \mathrm{if} \quad s < n/2
	\end{cases}
	\label{eq:6.25}
	\end{align}
hold and $\rho$ satisfies the inequalities
	\begin{align}
	1 + \frac{\min(n,\sigma)}{2 \alpha + n}
	< \rho \leq 1 + \frac{\min(n,\sigma)}{(n-2s')_+}.
	\label{eq:6.26}
	\end{align}
Then there exists a positive constant $C>0$ such that the estimates
	\begin{align*}
	\| \mathcal N(f) \|_{Y_0^{s,\alpha,\gamma}}
	&\leq C \| f \|_{X_0^{s,\alpha}}^\rho,\\
	\| \mathcal N(f) - \mathcal N(g) \|_{Y_0^{s,\alpha,\gamma}}
	&\leq C (\| f \|_{X_0^{s,\alpha}} + \| g \|_{X_0^{s,\alpha}})^{\rho-1}
	\| f - g \|_{X_0^{s,\alpha}}\\
	&+ C (\| f \|_{X_0^{s,\alpha}} + \| g \|_{X_0^{s,\alpha}})^{\rho-d}
	\| f - g \|_{X_0^{s',\alpha}}^d
	\end{align*}
hold for any $f, g \in X_0^{s,\alpha}$
with a positive number $d \leq 1$ satisfying
	\begin{align}
	d
	\begin{cases}
	= \min(\rho-[s]-1,1),
	&\mathrm{if}
	\quad s \not\in \mathbb Z
	\ \mathrm{and} \ \rho - [s] > 1,\\
	< \rho - s,
	&\mathrm{otherwise}.
	\end{cases}
	\label{eq:6.27}
	\end{align}
\end{Lemma}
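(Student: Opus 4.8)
The plan is to estimate the three constituents of the $Y_0^{s,\alpha,\gamma}$ norm separately: the weighted piece $\||\cdot|^\alpha\mathcal N(f)\|_{L^{q_\sigma}}$, the Lebesgue piece $\|\mathcal N(f)\|_{L^\gamma}$ (or $\|\mathcal N(f)\|_{L^{q_\sigma}}$ when $s=0$), and the Besov piece $\|\mathcal N(f)\|_{\dot B_{q_\sigma,2}^s}$ (when $s>0$). For the weighted piece I would apply Lemma \ref{Lemma:6.2} with $g=0$: since \eqref{eq:2.7}--\eqref{eq:2.8} with $\bar s=0$ give $\mathcal N(0)=0$, estimate \eqref{eq:6.7} yields $\||\cdot|^\alpha\mathcal N(f)\|_{L^{q_\sigma}}\le C\|f\|_{X_0^{s,\alpha}}^{\rho-1}\||\cdot|^\alpha f\|_{L^2}\le C\|f\|_{X_0^{s,\alpha}}^\rho$. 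First I note that the hypotheses \eqref{eq:2.5} and \eqref{eq:6.5} of Lemma \ref{Lemma:6.2} follow from \eqref{eq:6.26} together with $s'\le s$, since $(n-2s')_+\ge(n-2s)_+$. For the Lebesgue piece I would use the pointwise bound $|\mathcal N(f)|\le C|f|^\rho$ (again \eqref{eq:2.8} with $\bar s=0$ and second argument $0$), giving $\|\mathcal N(f)\|_{L^\gamma}\le C\|f\|_{L^{\rho\gamma}}^\rho$, and then close the estimate with the embedding $X_0^{s,\alpha}\hookrightarrow L^{\rho\gamma}$ of Lemma \ref{Lemma:6.1}, whose admissible range \eqref{eq:6.4} with $\nu=\rho\gamma$ is precisely \eqref{eq:6.25}.

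The core of the proof is the Besov piece. When $s$ is small, namely $s<\min(2,\rho)$, I would apply Lemma \ref{Lemma:6.3} directly with $\mathcal M=\mathcal N$, $P=q_\sigma$, $s_0=s$, $\rho_0=\rho$, checking that \eqref{eq:6.10}--\eqref{eq:6.11} reduce to \eqref{eq:6.25}--\eqref{eq:6.26}. For larger $s$ I would first lower the regularity: using the lifting property of homogeneous Besov spaces together with the Fa\`a di Bruno formula, I would write $\partial^\beta\mathcal N(f)$, for multi-indices $\beta$ whose length is chosen so that the residual order $s-|\beta|$ lies in $(0,2)$, as a finite sum of terms $\mathcal N^{(j)}(f)\prod_i\partial^{\gamma_i}f$, and distribute the remaining fractional derivative by the fractional Leibniz rule (Lemma \ref{Lemma:4.2}). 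In each resulting product exactly one factor carries the Besov norm: if it is a factor $\partial^{\gamma_i}f$, it is controlled by the embeddings of Lemma \ref{Lemma:6.1}; if it is the composite factor $\mathcal N^{(\bar s)}(f)$, it is controlled by Lemma \ref{Lemma:6.3} applied to $\mathcal M=\mathcal N^{(\bar s)}$, which is of order $\rho-\bar s$, while the remaining factors are placed in Lebesgue spaces via Lemma \ref{Lemma:6.1}. The integer $\bar s\le\max(1,[s]-1)$ is dictated by the available regularity $\mathcal N\in C^{\max([s]-1,1)}$, and the residual order $s_0=s-\bar s$ produces the two alternatives in \eqref{eq:6.27}: the case $s_0<1$ (occurring precisely when $s\notin\mathbb Z$ and $\rho-[s]>1$) yields the constraint $d\le\rho_0-1=\rho-[s]-1$, while the case $s_0\ge1$ yields $d<\rho_0-s_0=\rho-s$, matching Lemma \ref{Lemma:6.3}'s dichotomy on $d_{\rho_0,s_0}$.

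For the difference estimate I would repeat the same splitting. The weighted $L^{q_\sigma}$ and $L^\gamma$ differences follow from \eqref{eq:6.7}, \eqref{eq:6.6}, and the pointwise bound $|\mathcal N(f)-\mathcal N(g)|\le C(|f|+|g|)^{\rho-1}|f-g|$ combined with H\"older's inequality and Lemma \ref{Lemma:6.1}. For the Besov difference I would invoke \eqref{eq:6.13} of Lemma \ref{Lemma:6.3} (again after the lifting and Leibniz reduction when $s$ is large), whose two-term right-hand side, with $d=d_{\rho_0,s_0}$, reproduces exactly the two terms $(\cdots)^{\rho-1}\|f-g\|_{X_0^{s,\alpha}}$ and $(\cdots)^{\rho-d}\|f-g\|_{X_0^{s',\alpha}}^d$ of the claimed bound.

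I expect the main obstacle to be the Besov piece for large $s$: controlling the Fa\`a di Bruno combinatorics, choosing $\bar s$ and the auxiliary exponents $(P_0,P_1)$ so that every factor lands in a space covered by Lemmas \ref{Lemma:6.1}--\ref{Lemma:6.3}, and in particular verifying that the regularity $\mathcal N\in C^{\max([s]-1,1)}$ is never exceeded while the residual order $s_0=s-\bar s$ stays in $(0,2)$ and is compatible with the range of admissible $d$ in \eqref{eq:6.27}. By contrast, the embeddings and the verification that \eqref{eq:6.25}--\eqref{eq:6.26} imply \eqref{eq:6.10}--\eqref{eq:6.11} should be routine once the decomposition is fixed.
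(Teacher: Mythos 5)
Your proposal is correct and follows essentially the same route as the paper's own proof: the identical three-way splitting of the $Y_0^{s,\alpha,\gamma}$ norm (Lemma \ref{Lemma:6.2} for the weighted $L^{q_\sigma}$ piece, the pointwise bound plus Lemma \ref{Lemma:6.1} with $\nu=\rho\gamma$ for the $L^\gamma$ piece), direct application of Lemma \ref{Lemma:6.3} with $(P,s_0,\rho_0)=(q_\sigma,s,\rho)$ when $s<\min(2,\rho)$, and, for $2\le s<\rho$, the same lifting-plus-Leibniz reduction (the paper's estimate \eqref{eq:6.31} combined with Lemma \ref{Lemma:4.2}) in which $\mathcal N^{(\overline s)}$ is handled by Lemma \ref{Lemma:6.3} and $d=\min_{\overline s}d_{\rho-\overline s,s_0}$ yields exactly the dichotomy \eqref{eq:6.27}. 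The exponent bookkeeping you defer as the ``main obstacle'' (the choices of the H\"older tuples $Q$ and $\widetilde Q$, split into the cases $s\le n/2$ and $s>n/2$) is precisely where the paper spends its effort, and your reduction of \eqref{eq:6.10}--\eqref{eq:6.11} to \eqref{eq:6.26} via the identity \eqref{eq:6.30} matches the paper's computation.
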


\begin{proof}
At first we estimate the homogeneous Besov $\dot B_{q_\sigma,2}^{s}$-norm of the nonlinearity $\mathcal N$ with $s>0$, i.e. we prove that there exists a positive constant $C>0$ such that the estimates
	\begin{align}
	\| \mathcal N(f) \|_{\dot B_{q_\sigma,2}^{s}} \leq C \| f \|_{X_0^{s,\alpha}}^\rho
	\label{eq:6.28}
	\end{align}
and
	\begin{align}
	\| \mathcal N(f) - \mathcal N(g) \|_{\dot B_{q_\sigma,2}^{s}}
	&\leq C (\| f \|_{X_0^{s,\alpha}} + \| g \|_{X_0^{s,\alpha}})^{\rho-1}
	\| f - g \|_{X_0^{s,\alpha}}
	\nonumber\\
	&+ C (\| f \|_{X_0^{s,\alpha}} + \| g \|_{X_0^{s,\alpha}})^{\rho-d}
	\| f - g \|_{X_0^{s',\alpha}}^d
	\label{eq:6.29}
	\end{align}
hold for any $f,g\in X^{s,\alpha}_0$. We note that the identities hold:
	\begin{align}
	\frac{\min(n,\sigma)}{2 n} + \frac 1 2
	= \frac {n+\sigma}{n+\max(n,\sigma)} = \frac 1 {q_\sigma}.
	\label{eq:6.30}
	\end{align}
We divide the proof into the two cases where
$0 < s < \min(2,\rho)$ and $2 \leq s < \rho$.

When $0 < s < \min(2,\rho)$,
the estimates \eqref{eq:6.28} and \eqref{eq:6.29}
follow directly from Lemma \ref{Lemma:6.3}, since \eqref{eq:6.10} and \eqref{eq:6.11} hold
with $(P, \mathcal M, \rho_0, s_0)$
replaced by $(q_\sigma, \mathcal N, \rho, s)$.
Indeed, by the identities \eqref{eq:6.30}, the conditions \eqref{eq:6.10} and \eqref{eq:6.11} can be written as
	\[
	\frac{2\alpha + n}{2n} (\rho-1) + \frac 1 2
	> \frac 1 {q_\sigma}
	= \frac 1 2 + \frac{\min(n,\sigma)}{2 n}
	\geq \frac 1 2 + \frac{(n - 2 s')_+}{2n}(\rho-1),
	\]
which is equivalent to the condition \eqref{eq:6.26} when $s' \neq \frac n 2$.
When $s' = \frac n 2$,
no upper bound for $\rho$ is required here.

Next we assume $2 \leq s < \rho$.
We remark that
under this assumption,
the first estimate of \eqref{eq:6.26} holds,
since
	\[
	1+ \frac{\min(n,\sigma)}{2 \alpha + n}
	\leq 2 \leq s < \rho.
	\]
Let
	\[
	s_0:=
	\begin{cases}
	s- [s]
	&\mathrm{if} \quad s \not\in \mathbb Z \ \mathrm{and} \ \rho - [s] > 1,\\
	1
	&\mathrm{if} \quad s \in \mathbb Z,\\
	s- [s] + 1
	&\mathrm{if} \quad s \not\in \mathbb Z \ \mathrm{and} \ \rho - [s] \leq 1.
	\end{cases}
	\]
Then we note that
	\[
	s_0
	\begin{cases}
	< 1
	&\mathrm{if} \quad s \not\in \mathbb Z \ \mathrm{and} \ \rho - [s] > 1,\\
	=1
	&\mathrm{if} \quad s \in \mathbb Z,\\
	\in (1, 2)
	&\mathrm{if} \quad s \not\in \mathbb Z \ \mathrm{and} \ \rho - [s] \leq 1.
	\end{cases}
	\]
We claim that there exists a positive constant $C>0$ such that the estimate
    \begin{align}
    \| h \|_{\dot B_{q_\sigma,2}^s}
    \leq  C \sup_{|\alpha|=s-s_0} \| \partial^\alpha h \|_{\dot B_{q_\sigma,2}^{s_0}}.
    \label{eq:6.31}
    \end{align}
holds provided that the right hand side is finite. Indeed, for any $s_1 > 0$,
$(-\Delta)^{s_1/2}$ is isomorphism
from $\dot B_{q_\sigma,2}^s$ to $\dot B_{q_\sigma,2}^{s-s_1}$.
Moreover, $\partial^\alpha$ is a continuous map
from $\dot B_{q_\sigma,2}^{s-s_1+|\alpha|}$ to $\dot B_{q_\sigma,2}^{s-s_1}$
for any multi index $\alpha$.
Thus we see that \eqref{eq:6.31} holds.
For positive integers $j$ and $m$,
we set
	\begin{align*}
	\mathcal K_{m}^j
	:= \{ \kappa \in (\mathbb Z_{\geq 1})^{j}, |\kappa| = m\}.
	\end{align*}
Then the classical Leibniz rule and Lemma \ref{Lemma:4.2} imply that the estimate
	\begin{align*}
	\| \mathcal N (f) \|_{\dot B_{q_\sigma,2}^{s}}
	&\leq C
	\sum_{\overline s = 1}^{s-s_0}
	\sum_{\kappa \in \mathcal K_{s-s_0}^{\overline s}}
	 \| \mathcal N^{(\overline s)} (f) \|_{\dot B_{Q_0,2}^{s_0}}
	\prod_{j=1}^{\overline s} \| D^{\kappa_j} f \|_{L^{Q_j}}\\
	&+ C
	\| \mathcal N^{(1)} (f) \|_{L^{\widetilde Q_0}}
	\| f \|_{\dot B_{\widetilde Q_{1},2}^{s}}\\
	&+ C \sum_{\overline s = 2}^{s-s_0}
	\sum_{\kappa \in \mathcal K_{s-s_0}^{\overline s}}
	\sum_{j_0=1}^{\overline s}
	\| \mathcal N^{(\overline s)} (f) \|_{L^{\widetilde Q_0}}
	\| f \|_{\dot B_{\widetilde Q_{j_0},2}^{\kappa_{j_0}+s_0}}
	\prod_{j\in [1,\overline s] \backslash \{j_0\}}
	\| D^{\kappa_j} f \|_{L^{\widetilde Q_j}}
	\end{align*}
holds for
$Q=(Q_j)_{j=0}^{\overline s}=(Q_j(\overline s,\kappa))_{j=0}^{\overline s}$
and
$\widetilde Q=\left(\widetilde Q_j\right)_{j=0}^{\overline s}
= \left(\widetilde Q_j(\overline s,\kappa,j_0)\right)_{j=0}^{\overline s}$
satisfying the H\"older conditions
	\begin{align}
	\sum_{j=0}^{\overline s} \frac 1 {Q_j}
	&= \frac 1 {q_\sigma},
	\label{eq:6.32}\\
	\sum_{j=0}^{\overline s} \frac 1 {\widetilde Q_j}
	&= \frac 1 {q_\sigma}.
	\label{eq:6.33}
	\end{align}
We shall choose a proper pair $(Q,\widetilde Q)$
so that \eqref{eq:6.28} and \eqref{eq:6.29} hold.
We divide the proof
into two cases where $s \leq \frac n 2$ and $s > \frac n 2$.

We consider the case where $2 \leq s \leq \frac n 2$.
We remark that $s' \leq \frac n 2$ holds in this case.
We choose $Q$ with \eqref{eq:6.32} and under the following conditions:
	\begin{align*}
	\frac {\kappa_j}{n} + \frac{n-2s}{2n}
	&\leq \frac 1 {Q_j}
	\leq \frac 1 2
	\quad \mathrm{for} \ j \in [1,\overline s],\\
	\bigg( \frac 1 2 + \frac{\min(n,\sigma)}{2n}
	- \frac{\overline s}{2} \bigg)_+
	&\leq \frac{1}{Q_0} 
	\leq \frac 1 2 + \frac{\min(n,\sigma)}{2n}
	- \frac{s-s_0}{n} - \overline s \frac{n-2s}{2n}.
	\end{align*}
Here we note that
$1 \leq \overline s \leq s-s_0$ and
	\[
	\sum_{j=1}^{\overline s} \kappa_j = s-s_0.
	\]
We claim that
under the condition above,
we can take $Q_0$ satisyfying
\eqref{eq:6.10} and \eqref{eq:6.11} with $(P,\rho_0)$ replaced by $(Q_0,\rho-\overline s)$
for any $1 \leq \overline s \leq s-s_0$.
Indeed, we have
	\begin{align*}
	&\frac{2 \alpha + n}{2n} (\rho - \overline s -1) + \frac 1 2 + \frac{s - s_0}{s} \frac{\min(2\alpha,n)}{2n}
	- \bigg( \frac 1 2 + \frac{\min(n,\sigma)}{2n}
	- \frac{\overline s}{2} \bigg)_+\\
	&> \min\bigg(
	\frac{\overline s}{2} - \frac{\min(n,\sigma)}{2n}, \frac 1 2 \bigg)
	\geq 0,
	\end{align*}
where we have used the facts that
$\rho-\overline s \geq \rho-s+s_0 > 1$
and $s_0 \leq s-[s]+1$.
Here $\rho-s+s_0-1$ is computed and estimated as
	\[
	\rho-s+s_0 -1
	= \begin{cases}
	\rho-[s] -1
	&\mathrm{if} \quad s \in \mathbb Z \quad \mathrm{and} \quad \rho - [s] > 1,\\
	\rho - s
	&\mathrm{if} \quad s \in \mathbb Z,\\
	\rho-[s] \geq \rho - s
	&\mathrm{if} \quad s \in \mathbb Z \quad \mathrm{and} \quad \rho - [s] \leq 1.
	\end{cases}
	\]
Moreover, the last estimate of \eqref{eq:6.26} implies that we have
	\begin{align*}
	&\bigg(\frac 1 2 + \frac{\min(n,\sigma)}{2 n} - \frac {s-s_0} n
	- \overline s \bigg( \frac 1 2 - \frac s n \bigg) \bigg)
	- \bigg( \frac 1 2 - \frac {s-s_0}{n}
	+ (\rho-\overline s - 1) \bigg( \frac 1 2 - \frac{s'} n \bigg) \bigg)\\
	&\geq \frac{\min(n,\sigma)}{2n} - (\rho-1) \bigg( \frac 1 2 - \frac{s'} n \bigg)
	\ge 0.
	\end{align*}
These two estimates imply that
we can take $Q_0$ satisfying
\eqref{eq:6.10} and \eqref{eq:6.11} with $(P,\rho_0)$ replaced by $(Q_0,\rho-\overline s)$
for any $1 \leq \overline s \leq s-s_0$ hold.
Therefore, Lemma \ref{Lemma:6.3} implies that the estimates
	\begin{align}
	\| \mathcal N^{(\overline s)} (f) \|_{\dot B_{Q_0,2}^{s_0}}
	&\leq \| f \|_{X_0^{s,\alpha}}^{\rho-\overline s},
	\label{eq:6.34}\\
	\| \mathcal N^{(\overline s)} (f) - \mathcal N^{(\overline s)} (g) \|_{\dot B_{Q_0,2}^{s_0}}
	&\leq (\| f \|_{X_0^{s,\alpha}} + \| g \|_{X_0^{s,\alpha}} )^{\rho-\overline s-1}
	\| f- g\|_{X_0^{s,\alpha}}
	\nonumber\\
	&+ (\| f \|_{X_0^{s,\alpha}} + \| g \|_{X_0^{s,\alpha}}
		)^{\rho-\overline s-d_{\rho-\overline s,s_0}}
	\| f- g\|_{X_0^{s',\alpha}}^{d_{\rho-\overline s,s_0}}
	\label{eq:6.35}
	\end{align}
hold.
Here we note that
\eqref{eq:6.27} implies that we have
	\begin{align*}
	\min_{1 \leq \overline s \leq s-s_0} d_{\rho-\overline s, s_0}
	&= d_{\rho-s + s_0,s_0}
	\nonumber\\
	&\begin{cases}
		= \min(\rho-[s]-1, 1)
		&\mathrm{if} \quad s \not\in \mathbb Z \ \mathrm{or} \ \rho - [s] > 1,\\
		< \rho-s
		&\mathrm{otherwise}
	\end{cases}
	\nonumber\\
	&=d.
	\end{align*}
Moreover
the Sobolev inequality implies that
the relations
	\begin{align}
	X_0^{s,\alpha} \hookrightarrow H^s \hookrightarrow \bigcap_{j \in [1,\overline s]}H_{Q_j}^{\kappa_j}
	\label{eq:6.36}
	\end{align}
hold for any $j \in [1,\overline s]$.
We next choose $\widetilde Q$ by
	\begin{align*}
	\frac 1 {\widetilde Q_0}
	&= \frac{\min(n,\sigma)}{2 n}
	- (\overline s-1) \frac{(n-2s)}{2n},\\
	\frac 1 {\widetilde Q_{j_0}}
	&= \frac {\kappa_{j_0}}{n} + \frac{n-2(s-s_0)}{2n},\\
	\frac 1 {\widetilde Q_j}
	&= \frac {\kappa_j}{n} + \frac{n-2s}{2n}
	\quad \mathrm{for} \ j \in [1,\overline s] \backslash \{j_0\}.
	\end{align*}
Then $\widetilde Q$ satisfies \eqref{eq:6.33}.
The relation $X_0^{s',\alpha} \hookrightarrow L^{(\rho-\overline s) \widetilde Q_0}$ also holds.
Indeed the estimates
	\[
	(\rho-\overline s) \widetilde Q_0
	> 2 \geq \frac{2n}{2 \alpha + n}
	\]
hold since $\rho - \overline s > 1$ and $\widetilde Q_0 \geq 2$.
Moreover, the last estimate of \eqref{eq:6.26} implies that $\widetilde Q_0$ is estimated by
	\[
	\frac{1}{\widetilde Q_0}
	\geq \frac{\min(n,\sigma)}{2 n}
	- (\rho-1) \frac{(n-2s')}{2n}
	+ (\rho-\overline s) \frac{(n-2s')}{2n}
	\geq (\rho-\overline s) \frac{(n-2s')}{2n}.
	\]
Therefore, the estimate
	\begin{align*}
	(\rho-\overline s) \widetilde Q_0
	&\leq
	\begin{cases}
	\frac{2n}{n-2s'}
	&\mathrm{if} \quad s' < \frac n 2,\\
	(\rho-\overline s)\frac{\min(n,\sigma)}{2n} < \infty
	&\mathrm{if} \quad s' = \frac n 2
	\end{cases}
	\end{align*}
holds.
These estimates above and Lemma \ref{Lemma:6.3} imply that the estimates
	\begin{align}
	\| \mathcal N^{(\overline s)} (f) \|_{L^{\widetilde Q_0}}
	&\leq C\| f \|_{X_0^{s,\alpha}}^{\rho-\overline s},
	\label{eq:6.37}\\
	\| \mathcal N^{(\overline s)} (f) - \mathcal N^{(\overline s)} (g) \|_{L^{\widetilde Q_0}}
	&\leq C(\| f \|_{X_0^{s,\alpha}} + \| g \|_{X_0^{s,\alpha}} )^{\rho-\overline s-1}
	\| f- g\|_{X_0^{s,\alpha}}
	\nonumber\\
	&+ C(\| f \|_{X_0^{s,\alpha}} + \| g \|_{X_0^{s,\alpha}}
		)^{\rho-\overline s-d_{\rho-\overline s,s_0}}
	\| f- g\|_{X_0^{s',\alpha}}^{d_{\rho-\overline s,s_0}}
	\label{eq:6.38}
	\end{align}
hold.
It is also similarly seen that
$\widetilde Q$ satisfies the other requirements.
So we have
	\begin{align}
	X_0^{s,\alpha}
	\hookrightarrow \dot H_{\widetilde Q_{j_0}}^{\kappa_{j_0}+s_0}
	\cap \bigcap_{j \neq j_0} \dot H_{\widetilde Q_{j}}^{\kappa_{j}}.
	\label{eq:6.39}
	\end{align}
Finally, estimates \eqref{eq:6.28} and \eqref{eq:6.29}
follow from \eqref{eq:6.34}--\eqref{eq:6.39}.

When $s > \frac n 2$,
we choose $Q_j$ with \eqref{eq:6.32} under the following condition:
	\begin{align*}
	\frac{\kappa_j}{n}
	+ \frac{\kappa_j}{s-s_0} \bigg( \frac 1 2 - \frac{s-s_0}{n} \bigg)_-
	&\leq \frac 1 {Q_j}
	\leq \frac 1 2
	\quad \mathrm{for} \ j \in [1,\overline s],\\
	\bigg( \frac 1 2 + \frac{\min(n,\sigma)}{2n}
	- \frac{\overline s}{2} \bigg)_+
	&\leq \frac{1}{Q_0}
	\leq \bigg( \frac 1 2 - \frac{s - s_0}{n} \bigg)_+
	+ \frac{\min(n,\sigma)}{2n}.
	\end{align*}
It is seen that
the following estimates hold for $j \in [1, \overline s]$:
	\begin{align*}
	&\frac {\kappa_j}{n} + \bigg( \frac{\kappa_j}{2(s-s_0)}
	- \frac {\kappa_j} n \bigg)_-
	\leq \frac{\kappa_j}{2(s-s_0)} \leq \frac 1 2,\\
	& - \frac{\kappa_j}{s-s_0} \bigg( \frac n 2 - s + s_0 \bigg)_-
	= \frac{\kappa_j}{s-s_0} \bigg( s - \frac n 2 - s_0 \bigg)_+
	\leq s - \frac n 2.
	\end{align*}
Therefore, \eqref{eq:6.36} holds for any $Q$ under the condition above.
Moreover,
the second estimate of \eqref{eq:6.26} implies that
	\begin{align*}
	&\bigg( \frac 1 2 - \frac{s - s_0}{n} \bigg)_+
	+ \frac{\min(n,\sigma)}{2n}
	- \bigg( \bigg( \frac 1 2 - \frac {s-s_0}{n} \bigg)_+
	+ (\rho-\overline s - 1) \bigg( \frac 1 2 - \frac{s'} n \bigg)_{+} \bigg)\\
	&> 0.
	\end{align*}
Therefore, \eqref{eq:6.34} and \eqref{eq:6.35} hold.

We also choose
	\begin{align*}
	\frac 1 {\widetilde Q_0}
	&= \frac{\min(n,\sigma)}{2n},\\
	\frac 1 {\widetilde Q_{j}}
	&= 
	\frac{\kappa_j}{n}
	+\frac{\kappa_j}{s-s_0}
	\left(\frac{1}{2}
	    -\frac{s-s_0}{n}
	\right)_{-}
		\quad \mathrm{for} \ j \in [1,\overline s] \backslash \{ j_0 \},\\
	\frac 1 {\widetilde Q_{j_0}}
		&= \frac {\kappa_{j_0}}{n}
		+ \frac{\kappa_{j_0}}{s-s_0} \bigg( \frac 1 2 - \frac {s-s_0} n \bigg)_-
		+ \bigg( \frac 1 2 - \frac {s-s_0} n \bigg)_+.
	\end{align*}
Then $\widetilde{Q}$ satisfies
\eqref{eq:6.33}.
The estimates
	\begin{align*}
	(\rho-\overline s) \widetilde Q_0
	&\geq \frac{2 n}{\min(n,\sigma)} \geq 2,\\
	(\rho-\overline s) \widetilde Q_0
	&\leq (\rho-1)\frac{2 n}{\min(n,\sigma)}
	\begin{cases}
	\leq \frac{2 n}{(n-2s')_+}
	&\mathrm{if} \quad s' < \frac n 2,\\
	< \infty
	&\mathrm{if} \quad s' \geq \frac n 2
	\end{cases}
	\end{align*}
follow from the estimate $\rho-\overline s \geq 1$ and \eqref{eq:6.26}.
This implies that
\eqref{eq:6.37} and \eqref{eq:6.38} hold.
Besides $H^{s} \hookrightarrow  H^{\kappa_{j_0}+s_0}_{\widetilde Q_{j_0}}$,
since
	\begin{align*}
	\frac 1 {\widetilde Q_{j_0}}
	&\leq
	\begin{cases}
	\frac {\kappa_{j_0}}{n} + \frac{\kappa_{j_0}}{2(s-s_0)} - \frac {\kappa_{j_0}}{n}
	\leq  \frac 1 2
	&\mathrm{if} \quad s-s_0 \geq \frac n 2,\\
	\frac {\kappa_{j_0}}{n} + \frac 1 2 - \frac {s-s_0}{n}
	\leq \frac 1 2
	&\mathrm{if} \quad s-s_0 \leq \frac n 2,
	\end{cases}\\
	s_0 + \kappa_{j_0} - \frac n {\widetilde Q_{j_0}}
	&= s_0 - \frac{\kappa_{j_0}}{s-s_0} \bigg( \frac n 2 - s + s_0 \bigg)_-
	- \bigg( \frac n 2 - s +s_0 \bigg)_+\\
	&\leq s_0 + \bigg( s - \frac n 2 - s_0 \bigg)_+
	+ \bigg( s - \frac n 2 - s_0 \bigg)_-
	\leq s - \frac n 2.
	\end{align*}
From this, we can see that \eqref{eq:6.39} holds,
since the conditions for
$\widetilde{Q}_j$ with $j\neq j_0$
are easily verified.
Therefore,
the properties
\eqref{eq:6.34}--\eqref{eq:6.39}
holds in the same way as the previous case,
and hence, we obtain \eqref{eq:6.28} and \eqref{eq:6.29}.

Next, we estimate the Lebesgue $L^{\gamma}$ norms of the nonlinearity $\mathcal{N}(f)$ without derivative or weight.
Since \eqref{eq:6.25} implies that $\gamma \rho$ satisfies \eqref{eq:6.4} with $\nu$ replaced by $\gamma \rho$,
the argument of Lemma \ref{Lemma:6.1} implies that the estimates
	\begin{align*}
	\|\mathcal N(f)\|_{L^\gamma}
	&\leq C \| f \|_{L^{\rho\gamma}}^\rho \leq C \| f \|_{X_0^{s,\alpha}}^p,\\
	\|\mathcal N(f-g)\|_{L^\gamma}
	&\leq C \| (|f|+|g|)^{\rho-1} |f-g| \|_{L^{\gamma}}\\
	&\leq C (\| f \|_{L^{\rho\gamma}}+\| g \|_{L^{\rho\gamma}})^{\rho-1}
	\| f - g\|_{L^{\rho \gamma}}\\
	&\leq C (\| f \|_{X_0^{s,\alpha}}+\| g \|_{X_0^{s,\alpha}})^{\rho-1}
	\| f - g\|_{X_0^{s,\alpha}}
	\end{align*}
hold.
Finally, we note that
we can apply Lemma \ref{Lemma:6.2} because
the assumption \eqref{eq:6.26} implies that
$\rho$ satisfies \eqref{eq:2.5} and \eqref{eq:6.5}.

Therefore, combining these estimates above,
the proof is completed.
\end{proof}

\begin{Corollary}
\label{Corollary:6.6}
Let $s \geq 0$ and $T>0$.
We assume the same assumptions of Lemma \ref{Lemma:6.2} and \eqref{eq:6.25} when $s=0$
and those of Lemma \ref{Lemma:6.5} when $s > 0$.
Then there exists a positive constant $C>0$ independent of $T$ such that the estimates
	\begin{align}
	\| \mathcal N(u) \|_{Y^{s,\alpha,\gamma,r}(T)}
	&\leq C \| u \|_{X^{s,\alpha,r}(T)}^\rho,
	\label{eq:6.40}\\
	\| \mathcal N(u) - \mathcal N(v) \|_{Y^{s,\alpha,\gamma,r}(T)}
	&\leq C (\| u \|_{X^{s,\alpha,r}(T)} + \| v \|_{X^{s,\alpha,r}(T)})^{\rho-1}
	\| u - v \|_{X^{s,\alpha,r}(T)}
	\nonumber\\
	&+ C (\| u \|_{X^{s,\alpha,r}(T)} + \| v \|_{X^{s,\alpha,r}(T)})^{\rho-d}
	\| u - v \|_{X^{s',\alpha,r}(T)}^d
	\label{eq:6.41}
	\end{align}
hold for any $u,v \in X^{s,\alpha,r}(T)$,
where $d$ is given by \eqref{eq:6.27}.
\end{Corollary}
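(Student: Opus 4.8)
The plan is to derive both \eqref{eq:6.40} and \eqref{eq:6.41} from the fixed-time nonlinear estimates already proved in Lemma \ref{Lemma:6.5} (when $s>0$), respectively Lemma \ref{Lemma:6.2} supplemented by the elementary bound $\|\mathcal N(f)\|_{L^\gamma}\le C\|f\|_{L^{\rho\gamma}}^\rho$ (when $s=0$), by exploiting the scaling built into the dilation operator $\mathcal D_{\sigma,r}(t)$. The starting point is the identity noted immediately after the definition of the norm $Y^{s,\alpha,\gamma,r}(T)$, namely
\[
\|\mathcal N(u)\|_{Y^{s,\alpha,\gamma,r}(T)}
= \sup_{0<t<T} \langle t\rangle^{\frac{n\rho}{\sigma r}}
\big\| \mathcal N\big(\langle t\rangle^{-\frac{n}{\sigma r}} \mathcal D_{\sigma,r}(t) u(t)\big) \big\|_{Y_0^{s,\alpha,\gamma}},
\]
which rests on $\langle t\rangle^{-\frac{n}{\sigma r}}\mathcal D_{\sigma,r}(t)\mathcal N(u(t)) = \mathcal N(\langle t\rangle^{-\frac{n}{\sigma r}}\mathcal D_{\sigma,r}(t)u(t))$, together with the analogous identity for $\mathcal N(u)-\mathcal N(v)$ obtained from the linearity of $\mathcal D_{\sigma,r}(t)$.

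For each fixed $t\in(0,T)$ I would set $f_t:=\langle t\rangle^{-\frac{n}{\sigma r}}\mathcal D_{\sigma,r}(t)u(t)$ and $g_t:=\langle t\rangle^{-\frac{n}{\sigma r}}\mathcal D_{\sigma,r}(t)v(t)$ and apply the fixed-time estimates to $f_t$ and $g_t$. Because $X_0^{s,\alpha}$ and $X_0^{s',\alpha}$ are genuine (positively homogeneous) norms and $\langle t\rangle^{-\frac{n}{\sigma r}}>0$, one has $\|f_t\|_{X_0^{s,\alpha}}=\langle t\rangle^{-\frac{n}{\sigma r}}\|\mathcal D_{\sigma,r}(t)u(t)\|_{X_0^{s,\alpha}}$, and likewise for $g_t$, for the difference $f_t-g_t=\langle t\rangle^{-\frac{n}{\sigma r}}\mathcal D_{\sigma,r}(t)(u-v)(t)$, and for the $X_0^{s',\alpha}$-norm. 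Thus Lemma \ref{Lemma:6.5} gives $\|\mathcal N(f_t)\|_{Y_0^{s,\alpha,\gamma}}\le C\langle t\rangle^{-\frac{n\rho}{\sigma r}}\|\mathcal D_{\sigma,r}(t)u(t)\|_{X_0^{s,\alpha}}^\rho$, and the corresponding difference estimate produces the scaling factors $\langle t\rangle^{-\frac{n(\rho-1)}{\sigma r}}\langle t\rangle^{-\frac{n}{\sigma r}}$ and $\langle t\rangle^{-\frac{n(\rho-d)}{\sigma r}}\langle t\rangle^{-\frac{nd}{\sigma r}}$ on its two terms.

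The decisive point is that the prefactor $\langle t\rangle^{\frac{n\rho}{\sigma r}}$ in the identity cancels these powers exactly: for \eqref{eq:6.40} one has $\frac{n\rho}{\sigma r}-\frac{n\rho}{\sigma r}=0$, while for the two terms of \eqref{eq:6.41} one has $\frac{n\rho}{\sigma r}-\frac{n(\rho-1)}{\sigma r}-\frac{n}{\sigma r}=0$ and $\frac{n\rho}{\sigma r}-\frac{n(\rho-d)}{\sigma r}-\frac{nd}{\sigma r}=0$. After cancellation the right-hand sides depend on $t$ only through $\|\mathcal D_{\sigma,r}(t)u(t)\|_{X_0^{s,\alpha}}$ and the analogous quantities; taking the supremum over $0<t<T$ and invoking the definitions of $\|u\|_{X^{s,\alpha,r}(T)}$ and $\|u-v\|_{X^{s',\alpha,r}(T)}$ yields \eqref{eq:6.40} and \eqref{eq:6.41}, with $d$ inherited verbatim from \eqref{eq:6.27}.

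Since all the genuinely analytic work—the nonlinear estimates in the weighted Lebesgue, homogeneous Sobolev and Besov norms—is already encapsulated in Lemmas \ref{Lemma:6.2} and \ref{Lemma:6.5}, the only substantive task here is the exponent bookkeeping above, which I expect to be routine. The one point needing mild care is that Lemma \ref{Lemma:6.5} presupposes $s>0$; hence for $s=0$ the three constituents of the $Y_0^{0,\alpha,\gamma}$-norm (the weighted $L^{q_\sigma}$-piece, the plain $L^{q_\sigma}$-piece, and the $L^\gamma$-piece) must instead be controlled by Lemma \ref{Lemma:6.2} and a separate H\"older/Gagliardo--Nirenberg bound for $\|\mathcal N(f)\|_{L^\gamma}$ justified by \eqref{eq:6.25}, but the scaling argument is identical in both cases.
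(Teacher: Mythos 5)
Your proposal is correct and takes essentially the same route as the paper: the paper's proof likewise reduces \eqref{eq:6.40} and \eqref{eq:6.41} to the fixed-time estimates of Lemmas \ref{Lemma:6.2} and \ref{Lemma:6.5} via the commutation identity $\langle t\rangle^{-\frac{n}{\sigma r}}\mathcal D_{\sigma,r}(t)\mathcal N(u(t))=\mathcal N\big(\langle t\rangle^{-\frac{n}{\sigma r}}\mathcal D_{\sigma,r}(t)u(t)\big)$ (recorded as a remark after the norm definitions) together with the homogeneity of the $X_0^{s,\alpha}$- and $X_0^{s',\alpha}$-norms, with the powers of $\langle t\rangle$ cancelling exactly as in your bookkeeping. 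The paper merely states this more tersely, displaying only the chain for \eqref{eq:6.40} and leaving the difference estimate and the $s=0$ case implicit, both of which you handle correctly.
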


\begin{proof}
By Lemmas \ref{Lemma:6.2} and \ref{Lemma:6.5},
for any $t > 0$, the estimates
	\[
	\| \mathcal N(u) \|_{Y^{s,\alpha,\gamma,r}(T)}
	= \langle t \rangle^{\frac{n\rho}{r \sigma}}
	\| \mathcal N( \langle t \rangle^{- \frac{n}{r \sigma}} \mathcal D_{\sigma,r}(t) u) \|_{Y_0^{s,\alpha,\gamma}}
	\leq C \| \mathcal D_{\sigma,r}(t) u \|_{X_0^{s,\alpha}}^\rho
	\]
hold. Therefore, \eqref{eq:6.40} and \eqref{eq:6.41} hold.
\end{proof}


\section{Proofs of Theorems \ref{Theorem:2.5} and \ref{Theorem:2.7}}
\subsection{Proof of Theorem \ref{Theorem:2.5}}
Under the assumptions \eqref{eq:2.4}, \eqref{eq:2.5}, and \eqref{eq:2.6},
we claim that for any $r$ satisfying \eqref{eq:5.3},
there exists $\gamma \in [1,\min(r,q_\sigma)]$ satisfying
\eqref{eq:5.4} and \eqref{eq:6.25}.
We prove this claim later.
Then Theorems \ref{Theorem:2.1} and \ref{Theorem:2.3} imply that the estimate
	\begin{align}
	\| \widetilde{\mathcal S_\sigma}(t) u_0
	+ \mathcal S_\sigma (t) u_1 \|_{X^{s,\alpha,\gamma,r}(T)}
	\leq C \| u_0 \|_{X_0^{s,\alpha}}
	+ C \| u_1 \|_{X_0^{s-\frac{\sigma}{2},\alpha}}
	\label{eq:7.1}
	\end{align}
holds.
Moreover, Lemma \ref{Lemma:5.1} and Corollary \ref{Corollary:6.6} imply that
the estimates
	\begin{align}
	&\bigg\| \int_0^t \mathcal S_\sigma (t-\tau) \mathcal N (u)(\tau) d \tau
	\bigg\|_{X^{s,\alpha,r}(T)}
	\leq C T \| \mathcal N (u) \|_{Y^{s,\alpha,\gamma,r}(T)}
	\leq C T \| u \|_{X^{s,\alpha,r}(T)}^\rho,
	\label{eq:7.2}\\
	&\bigg\| \int_0^t \mathcal S_\sigma (t-\tau)
		(\mathcal N (u)(\tau) - \mathcal N (v)(\tau)) d \tau
	\bigg\|_{X^{s,\alpha,r}(T)}
	\nonumber\\
	&\leq C T (\| u \|_{X^{s,\alpha,r}(T)} + \| v \|_{X^{s,\alpha,r}(T)})^{\rho-d}
	\| u - v \|_{X^{s,\alpha,r}(T)}^d
	\label{eq:7.3}
	\end{align}
hold with $d$ defined by \eqref{eq:6.27}.
Now we prove existence of the exponent $\gamma$.
We divide the proof into two cases,
where $\sigma \geq n$ and $n > \sigma$.
In the case of $\sigma\geq n$, we set $\gamma :=1$.
\eqref{eq:2.4} implies that
$\gamma =1$ satisfies \eqref{eq:5.4}, i.e.
	\[
	\frac{rn}{n+r\sigma}
	< 1
	<
	\begin{cases}
	\frac{n}{( \alpha + \frac n 2 - \sigma)_+}
	&\mathrm{if} \quad \sigma \not\in 2 \mathbb Z,\\
	\infty
	&\mathrm{if} \quad \sigma \in 2 \mathbb Z.
	\end{cases}
	\]
Moreover, the estimates
	\begin{align*}
	\frac{2n}{2 \alpha+n}
	&< 1 + \frac{n}{2\alpha+n},\\
	1+\frac{n}{(n-2s)+}
	&\leq \frac{2n}{(n-2s)+}
	\end{align*}
imply that \eqref{eq:6.25} follows from \eqref{eq:2.5} and \eqref{eq:2.6}.
In the case of $n>\sigma$, we choose $\gamma \geq 1$ so that
$\gamma$ satisfies \eqref{eq:5.4} and
	\begin{align}
	\frac{2n}{2 \alpha +n + \sigma}
	\leq \gamma
	< \frac{2n}{(n-2s)_+ + \sigma}.
	\label{eq:7.4}
	\end{align}
We remark that the estimates
	\begin{align*}
	\frac{2n}{(n-2s)_+ + \sigma}
	&> \frac{2 n}{n+2\sigma}
	\geq \frac{nr}{n+r\sigma},\\
	\frac{2n}{(n-2s)_+ + \sigma}
	&> \frac{2n}{(n-2s)_+ + n}
	\geq 1
	\end{align*}
hold. Then \eqref{eq:2.5}, \eqref{eq:2.6}, and \eqref{eq:7.4} imply that the estimates
	\begin{align*}
	\rho \gamma
	& > \frac{2n}{2\alpha+n+\sigma} \left( 1 + \frac{\sigma}{2\alpha+n}\right)
	= \frac{2n}{2\alpha+n}\\
	\rho \gamma
	&< \frac{2n}{(n-2s)_+ + \sigma}\left( 1 + \frac{\sigma}{(n-2s')_+}\right)
	\leq \frac{2n}{(n-2s)_+}
	\end{align*}
hold and imply \eqref{eq:6.25}.

In the case where $s \in \mathbb Z$ and $\rho-s > 1$
and in the case where $s \not\in \mathbb Z$ and $\rho-[s] \geq 2$,
\eqref{eq:7.1}, \eqref{eq:7.2}, and \eqref{eq:7.3} with $d=1$
imply that a unique mild solution to \eqref{eq:1.17} is constructed
by the standard contraction argument on $X^{s,\alpha,r}(T) $
for sufficiently small $T$.

In the cases
where $s \in \mathbb Z$ and $\rho-s \leq 1$
and where $s \not\in \mathbb Z$ and $\rho-[s] < 2$,
\eqref{eq:7.3} with $d=1$ may not hold.
However, by a similar argument,
the estimates
	\begin{align*}
	&\bigg\| \int_0^t \mathcal S_\sigma (t-\tau)
		(\mathcal N (u)(\tau) - \mathcal N (v)(\tau)) d \tau
	\bigg\|_{X^{0,\alpha,r}(T)}\\
	&\leq C \int_0^T \| \langle x \rangle^\alpha \mathcal (N (u)(\tau) - \mathcal N (v)(\tau)) \|_{L^{q_\sigma}} d \tau\\
	&\leq C T (\| u \|_{X^{s,\alpha,r}(T)} + \| u \|_{X^{s,\alpha,r}(T)})^{\rho-1}
	\| u - v \|_{X^{0,\alpha,r}(T)}
	\end{align*}
follow from the assumption $\rho > 1$ and Lemma \ref{Lemma:6.2}.
Since the unit ball on $X^{s,\alpha,r}(T)$ is closed in $X^{0,\alpha,r}(T)$, a unique mild solution to \eqref{eq:1.17} is constructed
by the standard contraction argument in $X^{0,\alpha,r}(T)$.
The continuous dependence of initial data in $X^{s,\alpha,r}(T)$
also follows from Corollary \ref{Corollary:6.6}.
Indeed,
the Gagliardo-Nirenberg inequality implies that
	\[
	\| u - v \|_{X^{s',\alpha,r}(T)}
	\leq C \| u - v \|_{X^{0,\alpha,r}(T)}^{1-\frac {s'}{s}}
	\| u - v \|_{X^{s,\alpha,r}(T)}^{\frac {s'}{s}}.
	\]
Therefore, the estimate above
and the continuous dependence of initial data on $X^{0,\alpha,r}(T)$
imply that the mild solution depend continuously on initial datum
also in $X^{s,\alpha,r}(T)$.


\subsection{Proof of Theorem \ref{Theorem:2.7}}
Under the assumptions of Theorem \ref{Theorem:2.7},
there exists $r \geq 1$ satisfying  $r> \frac{2n}{2\alpha + n}$ that
	\[
	\rho \geq 1 + \frac{r\sigma}{n},\quad
	\rho > 1 + \frac{\sigma}{n}.
	\]
Therefore,
Theorem \ref{Theorem:2.7} follows from Lemma \ref{Lemma:5.1}
and Corollary \ref{Corollary:6.6}. The time decay estimates (\ref{eq:2.9})-(\ref{eq:2.11}) follows from the estimate $\|u\|_{X^{s,\alpha,r}(\infty)}\le C\varepsilon$ for some positive constant independent of $\varepsilon$ and the definition of the $X^{s,\alpha,r}(\infty)$-norm.

\appendix

\section{Proof of a nonlinear estimate}
\begin{Lemma}
\label{Lemma:A.1}
Let $z=(z_j)_{j=-1}^1, w=(w_j)_{j=-1}^1 \subset \mathbb R$.
For any real sequence $(a_j)_{j=-1}^1$,
let $\Delta^{(2)}(a) = a_{1}+a_{-1} - 2 a_{0}$.
Then
	\begin{align*}
	&| \Delta^{(2)}(\mathcal M (z)) - \Delta^{(2)}(\mathcal M (w))|\\
	&\leq C \max_j |z_j|^{\rho_0} | \Delta^{(2)} (z-w)|\\
	& + C \max_j (|z_j|+|w_j|)^{(\rho_0-2)_+} \max_j |z_j-w_j|^{\min(\rho_0-1,1)} | \Delta^{(2)} (w)|\\
	& + C \max_j |z_j|^{(\rho_0-2)_+}
		(|z_1-z_0| + |z_0 - z_{-1}|)^{\min(\rho_0-1,1)} |z_0 - w_0 - z_{-1} + w_{-1}|\\
	& + C \max_j (|z_j|+|w_j|)^{(\rho_0-2)_+} |w_0 - w_{-1}|\\
	&\cdot
	\min(
		(|z_1-z_0| + |z_0 - z_{-1}| + |w_1-w_0| + |w_0 - w_{-1}|),
		\max_j|z_j - w_j|
	)^{\min(\rho_0-1,1)}
	\end{align*}
with $\mathcal M$ of Lemma \ref{Lemma:6.3}.
\end{Lemma}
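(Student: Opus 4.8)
The plan is to reduce everything to the fundamental theorem of calculus for $\mathcal{M}$ together with the Hölder-type bound \eqref{eq:6.9} on $\mathcal{M}'$. Write $p := z_1 - z_0$, $q := z_0 - z_{-1}$, $P := w_1 - w_0$, $Q := w_0 - w_{-1}$, and for $a,b \in \mathbb{R}$ set $g(a,b) := \int_0^1 \mathcal{M}'(a + t(b-a))\,dt$, so that $\mathcal{M}(b) - \mathcal{M}(a) = g(a,b)(b-a)$. Since $\mathcal{M}'(0) = 0$, taking $y = 0$ in \eqref{eq:6.9} gives $|\mathcal{M}'(x)| \leq C|x|^{\rho_0 - 1}$, whence $|g(a,b)| \leq C\max(|a|,|b|)^{\rho_0-1}$ because the argument $a+t(b-a)$ is a convex combination of $a$ and $b$. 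Moreover, for all $a,b,a',b'$, the argument difference $(a+t(b-a))-(a'+t(b'-a')) = (1-t)(a-a')+t(b-b')$ has modulus at most $\delta := \max(|a-a'|,|b-b'|)$, so \eqref{eq:6.9} with $\ell = 1$ yields the single variation estimate $|g(a,b) - g(a',b')| \leq C\,m^{(\rho_0-2)_+}\,\delta^{\min(\rho_0-1,1)}$, where $m := \max(|a|,|b|,|a'|,|b'|)$; the two branches of \eqref{eq:6.9} are exactly what produce the exponents $(\rho_0-2)_+$ and $\min(\rho_0-1,1)$. With this notation $\Delta^{(2)}(\mathcal{M}(z)) = g(z_0,z_1)\,p - g(z_{-1},z_0)\,q$, and likewise for $w$.

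The next step is a purely algebraic decomposition of $\Delta^{(2)}(\mathcal{M}(z)) - \Delta^{(2)}(\mathcal{M}(w))$ designed to expose the four structures on the right-hand side. Adding and subtracting terms,
\begin{align*}
\Delta^{(2)}(\mathcal{M}(z)) - \Delta^{(2)}(\mathcal{M}(w))
&= g(z_0,z_1)\,\Delta^{(2)}(z-w)
+ [g(z_0,z_1) - g(z_{-1},z_0)](q - Q)\\
&\quad + [g(z_0,z_1) - g(w_0,w_1)]\,\Delta^{(2)}(w)
+ \mathcal{E}\,Q,
\end{align*}
where $\Delta^{(2)}(z-w) = (p-P)-(q-Q)$, $q - Q = z_0 - w_0 - z_{-1} + w_{-1}$, $\Delta^{(2)}(w) = P - Q$, and
\[
\mathcal{E} := [g(z_0,z_1) - g(w_0,w_1)] - [g(z_{-1},z_0) - g(w_{-1},w_0)].
\]
(The identity is verified by expanding and checking that every cross term cancels, leaving $g(z_0,z_1)p - g(w_0,w_1)P - g(z_{-1},z_0)q + g(w_{-1},w_0)Q$.) The first three summands are now controlled directly by the two $g$-estimates: the bound $|g(z_0,z_1)| \le C\max_j|z_j|^{\rho_0-1}$ gives $C\max_j|z_j|^{\rho_0-1}|\Delta^{(2)}(z-w)|$; the variation estimate with $\delta \le |p|+|q|$ gives the term with factor $(|z_1-z_0|+|z_0-z_{-1}|)^{\min(\rho_0-1,1)}|q-Q|$; and the variation estimate with $\delta \le \max_j|z_j-w_j|$ gives the term with factor $\max_j|z_j-w_j|^{\min(\rho_0-1,1)}|\Delta^{(2)}(w)|$. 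These match the first three terms of the statement.

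The one delicate point, and the main obstacle, is the last term $\mathcal{E}\,Q$, which must be estimated so as to produce the \emph{minimum} in the statement; this requires bounding $\mathcal{E}$ in two different ways by regrouping its four evaluations of $g$. Regrouping ``by intervals'', $\mathcal{E} = [g(z_0,z_1) - g(z_{-1},z_0)] - [g(w_0,w_1) - g(w_{-1},w_0)]$, and applying the variation estimate to each bracket (with $\delta \le |p|+|q|$ and $\delta \le |P|+|Q|$ respectively) gives $|\mathcal{E}| \le C\max_j(|z_j|+|w_j|)^{(\rho_0-2)_+}(|p|+|q|+|P|+|Q|)^{\min(\rho_0-1,1)}$, where I use the subadditivity $x^{\theta}+y^{\theta} \le 2(x+y)^{\theta}$ valid for $\theta = \min(\rho_0-1,1) \le 1$. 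Regrouping instead ``by $z$ versus $w$'' as in the definition of $\mathcal{E}$, and applying the variation estimate to each bracket (now with $\delta \le \max_j|z_j-w_j|$), gives $|\mathcal{E}| \le C\max_j(|z_j|+|w_j|)^{(\rho_0-2)_+}\max_j|z_j-w_j|^{\min(\rho_0-1,1)}$. Taking the smaller of the two upper bounds and multiplying by $|Q| = |w_0-w_{-1}|$ yields exactly the fourth term of the statement. Everything outside this last regrouping is a routine application of the two $g$-estimates, so the real work is organizing $\mathcal{E}$ so that both bounds hold simultaneously.
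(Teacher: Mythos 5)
Your proof is correct and is essentially the paper's own argument: your $g(a,b)$ is exactly the paper's path integral $\int_0^1 \mathcal M'(z_\theta)\,d\theta$ (with $z_\theta$ the piecewise-linear path through $z_{-1},z_0,z_1$, and $g(z_{-1},z_0)=g(z_0,z_{-1})$ by symmetry), your four-term identity is term-for-term the paper's decomposition, and your double regrouping of $\mathcal E$ is precisely how the paper's final integrand $\mathcal M'(z_\theta)-\mathcal M'(z_{-\theta})-\mathcal M'(w_\theta)+\mathcal M'(w_{-\theta})$, together with $z_\theta - z_{-\theta}=\theta(z_1-z_{-1})$, produces the minimum; you merely make explicit the estimates the paper leaves to the reader. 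The one discrepancy is that you (correctly) obtain the exponent $\rho_0-1$ in the first term where the statement writes $\rho_0$ --- since $|\mathcal M'|\leq C|\cdot|^{\rho_0-1}$, the paper's own proof also yields $\rho_0-1$, so the exponent in the statement is evidently a typo and your version is the intended one.
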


\begin{proof}
Let
	\[
	z_\theta
	= \begin{cases}
	\theta z_1 + (1-\theta) z_0
	&\mathrm{if} \quad \theta \in [0,1],\\
	|\theta| z_{-1} + (1-|\theta|) z_0
	&\mathrm{if} \quad \theta \in [-1,0].
	\end{cases}
	\]
Then
	\begin{align*}
	&\Delta^{(2)}(\mathcal M(z))\\
	&= \int_0^1 \mathcal M' (z_\theta) d \theta (z_1-z_0)
	- \int_{-1}^0 \mathcal M' (z_\theta) d \theta (z_0-z_{-1})\\
	&= \int_0^1 \mathcal M' (z_\theta) d \theta \Delta^{(2)}(z)
	+ \int_0^{1} (\mathcal M'(z_\theta) - \mathcal M' (z_{-\theta})) d \theta
	(z_{0}- z_{-1}).
	\end{align*}
Therefore
	\begin{align*}
	&\Delta^{(2)}(\mathcal M(z) - \mathcal M(w))\\
	&= \int_0^1 \mathcal M' (z_\theta) d \theta \Delta^{(2)}(z-w)
	+ \int_0^1 (\mathcal M' (z_\theta) - \mathcal M'(w_\theta)) d \theta \Delta^{(2)}(w)\\
	&+ \int_0^{1} (\mathcal M'(z_\theta) - \mathcal M' (z_{-\theta})) d \theta
	(z_{0} - w_0 - z_{-1} + w_{-1})\\
	&+ \int_0^{1}
		(\mathcal M'(z_\theta) - \mathcal M' (z_{-\theta})
		- \mathcal M'(w_\theta) + \mathcal M' (w_{-\theta})
		) d \theta
	(w_0 - w_{-1}).
	\end{align*}
Since $z_\theta - z_{-\theta} = \theta (z_1 - z_{-1})$,
the assertion is obtained.
\end{proof}

\section{Proof of Lemma \ref{Lemma:4.2}}
\label{section:B}

\begin{proof}[Proof of Lemma \ref{Lemma:4.2}]
When $s=0$,
the bilinear estimate is easily seen with \eqref{eq:1.15}.
When $s>0$, it is sufficient for us to show
	\begin{align}
	\| \phi_j \ast f \sum_{k \leq j-2} \phi_k \ast g \|_{L^{p_0}}
	&\leq C \| \phi_j \ast f \|_{L^{p_1}} \| g \|_{L^{p_2}},
	\label{eq:B.1}\\
	\bigg\| 2^{sj} \| \sum_{k \geq 0} (\phi_{j+k} \ast f) (\phi_{j+k} \ast g) \|_{L^{p_0}} \bigg\|_{\ell^2}
	&\leq C \| f \|_{\dot B_{p_1,2}^s} \| g \|_{L^{p_2}}.
	\label{eq:B.2}
	\end{align}
\eqref{eq:B.1} follows from the Young estimate and the fact that
	\[
	\|\sum_{k \leq j-2} \phi_k \|_{L^1}
	= \|\sum_{k \leq 0} \phi_k \|_{L^1}
	\]
holds for any $j \in \mathbb Z$.
\eqref{eq:B.2} also holds because we have
	\begin{align*}
	\bigg\| 2^{sj} \| \sum_{k \geq 0} (\phi_{j+k} \ast f) (\phi_{j+k} \ast g) \|_{L^{p_0}}
	\bigg\|_{\ell_j^2}
	&\leq
	\| g \|_{L^{p_2}} \sum_{k \geq 0} 2^{-sk} \| 2^{s(j+k)} \|\phi_{j+k} \ast f \|_{L^{p_1}} \|_{\ell_j^2}\\
	&\leq C
	\| g \|_{L^{p_2}} \| f \|_{\dot B_{p_1,2}^s}.
	\end{align*}
\end{proof}


\section*{Acknowledgement}
The first author is supported by Grant-in-Aid for JSPS Fellows 19J00334.
The second author is supported by JST CREST Grant Number JPMJCR1913, Japan.
The second and third authors have been partially supported by
the Grant-in-Aid for Scientific Research (B) (No.18H01132)
and Young Scientists Research (No.19K14581), (No. JP16K17625) Japan Society for the Promotion of Science.




\begin{thebibliography}{00}

\bibitem{INT}
{\sc J. Bergh and J. L\"ofstr\"om},
Interpolation spaces, Springer, Berlin/Heiderberg/New York
(1976).

\bibitem{ChdLuIk13}
{\sc  R.C. Char\~{a}o, C.R. da Luz, and R. Ikehata},
{\em Sharp decay rates for wave equations with a fractional damping via new method in the Fourier space},
J.\ Math.\ Anal.\ Appl.\ {\bf 408} (2013), 247--255.


\bibitem{CFZ14}
{\sc J. Chen, D. Fan, and C. Zhang},
{\em Space-Time Estimates on Damped Fractional Wave Equation},
Abstr.\ Appl.\ Anal.\
{\bf 2014}(2014),
Article ID 428909.


\bibitem{CFZ15}
{\sc J. Chen, D. Fan, and C. Zhang},
{\em Estimates for damped fractional wave equations and applications},
Electronic Journal of Differential Equations {\bf 2015} (2015), 1--14.


\bibitem{ChHa03}
{\sc R. Chill and A. Haraux},
{\em An optimal estimate for the difference of solutions of two abstract evolution equations},
J. Differential Equations {\bf 193} (2003), 385--395.

\bibitem{DaEb14NA}
{\sc M. D'Abbicco and M.R. Ebert},
 {\em An application of $L^p-L^q$ decay estimates to the semi-linear wave equation
 with parabolic-like structural damping},
 Nonlinear Anal.\ {\bf 99} (2014), 16--34.

\bibitem{DaEb14JDE}
{\sc M. D'Abbicco and M.R. Ebert},
{\em Diffusion phenomena for the wave equation with structural damping in the $L^p-L^q$ framework},
J.\ Differential Equations {\bf 256} (2014), 2307--2336.

\bibitem{DaEb16}
{\sc M. D'Abbicco and M.R. Ebert},
{\em A classification of structural dissipations for evolution operators},
Math.\ Meth.\ Appl.\ Sci.\ {\bf 39} (2016), 2558--2582.


\bibitem{DaRe14}
{\sc M. D'Abbicco and M. Reissig},
{\em Semilinear structural damped waves},
 Math.\ Methods Appl.\ Sci.\ {\bf 37} (2014), 1570--1592.

\bibitem{dLuChIk15}
{\sc C.R. da Luz, R. Ikehata, and R.C. Char\~{a}o},
{\em Asymptotic behavior for abstract evolution differential equations of second order},
J. Differential Equations {\bf 259} (2015), 5017--5039.


\bibitem{DaoRe19}
{\sc T.A. Dao, M. Reissig},
{\em An application of $L^1$ estimates for oscillating
integrals to parabolic like semi-linear
structurally damped $\sigma$-evolution models},
J.\ Math.\ Anal.\ Appl.\ {\bf 476} (2019), 426--463.

\bibitem{DaoRe19DCDS}
{\sc T.A. Dao, M. Reissig},
{\em $L^1$ estimates for oscillating integrals and
their applications to semi-linear models
with $\sigma$-evolution like structural damping},
Discrete Contin.\ Dyn.\ Syst.\ {\bf 39} (2019), 5431--5463.

\bibitem{FaLuRe10}{\sc D. Fang, X. Lu, M. Reissig},
{\em High-order energy decay for structural damped systems in the electromagnetical field},
Chin.\ Ann.\ Math.\ Ser. B, {\bf 31} (2010),
237--246.

\bibitem{F} {\sc H. Fujita},
{\em On the blowing up of solutions of the Cauchy problem for
$u_t = \Delta u + u^{1+\alpha}$},
J.\ Fac.\ Sci.\ Univ.\ Tokyo Sect.\ I,
{\bf 13} (1966),
109--124.

\bibitem{GOV}
J. Ginibre, T. Ozawa, and G. Velo,
{\em On the existence of the wave operators for a class of nonlinear Schr\"odinger equations}
Annales de l’I. H. P., Physique th\'eorique,
\textbf{60}(1994), no.2, 211--239.

\bibitem{Gol}
{\sc S. Goldstein},
 {\em On diffusion by discontinuous movements and the telegraph equation},
 Q. J. Mech. Appl. Math. {\bf 4} (1951), 129--156.


\bibitem{HMOW}
H. Hajaiej, L. Molinet, T. Ozawa, and B. Wang,
{\em Necessary and sufficient conditions for the fractional Gagliardo-Nirenberg inequalities and applications to Navier-Stokes and generalized boson equations. Harmonic analysis and nonlinear partial differential equations},
RIMS Kokyuroku Bessatsu,
\textbf{B26}(2011)
159--175.

\bibitem{HKN04}{\sc N. Hayashi, E. I. Kaikina and P. I. Naumkin},
{\em Damped wave equation with super critical nonlinearities},
Diff. Integral Equ., {\bf 17} (2004), 637--652.

\bibitem{HoOg04}{\sc T. Hosono, T. Ogawa},
{\em Large time behavior and $L^p$-$L^q$ estimate of
solutions of 2-dimensional nonlinear damped wave equations},
J. Differential Equations {\bf 203} (2004), 82--118.

\bibitem{IIOW}
M. Ikeda, T. Inui, M. Okamoto and Y. Wakasugi, $L^p-L^q$ estimates for the damped wave equation and the critical exponent for the nonlinear problem with slowly decaying data, Commun. Pure. Appl. Anal., \textbf{18} (2019), 


\bibitem{IIW}
M. Ikeda, T. Inui, and Y. Wakasugi,
The Cauchy problem for the nonlinear damped wave equation with slowly decaying date,
Nonlinear Differ. Equ. Appl.,
(2017) 24:10.

\bibitem{IkNa12}
{\sc R. Ikehata, M. Natsume},
 {\em Energy decay estimates for wave equations with a fractional damping},
 Differential Integral Equations {\bf 25} (2012), 939--956.

\bibitem{IkNi03}
{\sc R. Ikehata, K. Nishihara},
{\em Diffusion phenomenon for second order linear evolution equations},
Studia Math.\ {\bf 158} (2003), 153--161.


\bibitem{IkToYo13JDE}
{\sc R. Ikehata, G. Todorova, B. Yordanov},
{\em Wave equations with strong damping in Hilbert spaces}, 
J. Differential Equations {\bf 254} (2013), 3352--3368.

\bibitem{K79}{\sc M. Kac},
{\em A stochastic model related to the telegrapher's equation},
J. Math. {\bf 4} (1974), 497--509.

\bibitem{KaRe18}{\sc A. Kainane, M. Reissig},
{\em Semi-linear fractional $\sigma$-evolution equations with mass or power non-linearity},
NoDEA Nonlinear Differential Equations Appl.\ {\bf 25} (2018), Art. 42, 43 pp.


\bibitem{KaRe15ADE}
{\sc M. Kainane, M. Reissig},
{\em Qualitative properties of solution to structurally damped $\sigma$-evolution models
with time increasing coefficient in the dissipation},
Adv.\ Differential Equations {\bf 20} (2015), 433--462.

\bibitem{Kar00}
{\sc G. Karch},
{\em Selfsimilar profiles in large time asymptotics of solutions to damped wave equations},
Studia Math.\ {\bf 143} (2000), 175--197.

\bibitem{LZ95}{\sc T.-T. Li, Y. Zhou},
{\em Breakdown of solutions to $\Box u+u_t=|u|^{1+\alpha}$},
Discrete Contin. Dyn. Syst {\bf 1} (1995), 503--520.

\bibitem{LuRe09}
{\sc X. Lu, M. Reissig},
{\em Rates of decay for structural damped models with decreasing in time coefficients},
Int.\ J.\ Dynamical Systems and Diff.\ Eqns. {\bf 2} (2009), 21--55.

\bibitem{Ma76}{\sc A. Matsumura},
{\em On the asymptotic behavior of solutions of semi-linear wave equations},
Publ.\ Res.\ Inst.\ Math.\ Sci.\ {\bf 12} (1976), 169--189.

\bibitem{Na04}
{\sc T. Narazaki},
{\em $L^p$-$L^q$ estimates for damped wave equations
and their applications to semi-linear problem},
J. Math.\ Soc.\ Japan {\bf 56} (2004), 585--626.

\bibitem{Ni03MathZ}{\sc K. Nishihara},
{\em $L^p-L^q$ estimates of solutions to the damped wave equation
in 3-dimensional space and their application},
Math.\ Z. {\bf 244} (2003), 631--649.


\bibitem{PhKaRe15}{\sc D.T. Pham, M. Kainane, M. Reissig},
{\em Global existence for semi-linear structurally damped $\sigma$-evolution models},
J.\ Math.\ Anal.\ Appl.\ {\bf 431} (2015), 569--596.


\bibitem{RaToYo11}{\sc P. Radu, G. Todorova, B. Yordanov},
{\em Diffusion phenomenon in Hilbert spaces and applications},
J. Differential Equations {\bf 250} (2011), 4200--4218. 


\bibitem{SS}
{\sc Y. Shibata and S. Shimizu},
{\em A decay property of the Fourier transform and its application to the Stokes problem},
J. Math. Fluid Mech,
\textbf{3}(2001),
213--320.

\bibitem{S19}
{\sc M. Sobajima}
{\em Global existence of solutions to semilinear damped wave equation with slowly decaying initial data in exterior domain},
Diff. Int. Equs., \textbf{32} (2019),
615--638.

\bibitem{TY01}{\sc G. Todorova, B. Yordanov},
{\em Critical exponent for a nonlinear wave equation with damping},
J. Diff. Equs., {\bf 174} (2001), 464--489. 

\bibitem{V}
{\sc J. L. V\'azquez},
{\em Asymptotic behaviour for the fractional heat equation in the Euclidean space},
Complex Variables and Elliptic Equations
{\bf 63} (2018), 1216--1231.

\bibitem{Wi06}
{\sc J. Wirth},
{\em Wave equations with time-dependent dissipation. I. Non-effective dissipation}, J. Differential Equations, {\bf 222} (2006), 487--514.

\bibitem{Wi07}
{\sc J. Wirth},
{\em Wave equations with time-dependent dissipation. II. Effective dissipation}, J. Differential Equations, {\bf 232} (2007), 74--103.

\bibitem{QZ01}
{\sc Q.S. Zhang},
{\em A blow-up result for a nonlinear wave equation with damping: the critical case}, C. R. Acad. Sci. Paris S\'er. I Math., {\bf 333} (2001), 109--114.

\end{thebibliography}
\end{document}